\documentclass[11pt]{amsart}
\usepackage[divide={2.45cm,*,2.45cm}]{geometry}               
\geometry{letterpaper}                   
\usepackage{graphicx}
\usepackage{amssymb}
\usepackage{epstopdf}
\usepackage{amsmath}
\usepackage{amsfonts}
\usepackage{amssymb}
\usepackage{amsthm}

\DeclareGraphicsRule{.tif}{png}{.png}{`convert #1 `dirname #1`/`basename #1 .tif`.png}

\title{The Transverse Entropy Functional and the Sasaki-Ricci flow}
\author{Tristan C. Collins}
\address{Department of Mathematics, Columbia University, New York, NY 10027}
\email{tcollins@math.columbia.edu}

\begin{document}
\theoremstyle{plain}
\newtheorem{Lemma}{Lemma}[section]
\newtheorem{prop}{Proposition}[section]
\newtheorem{Theorem}{Theorem}[section]
\newtheorem{corollary}{Corollary}[section]
\newtheorem{definition}{Definition}[section]
\newtheorem{example}{Example}
\theoremstyle{remark}
\newtheorem*{remark}{Remark}
\newtheorem{step}{Step}
\maketitle
\begin{abstract}
We introduce two new functionals, inspired by the work of Perelman, which are monotonic along the Sasaki-Ricci flow.  We relate their gradient flow, via diffeomorphisms preserving the foliated structure of the manifold, to the transverse Ricci flow.  Finally, when the basic first Chern class is positive, we employ these new functionals to prove a uniform $C^{0}$ bound for the transverse scalar curvature, and a uniform $C^{1}$ bound for the transverse Ricci potential along the Sasaki-Ricci flow. 
\end{abstract}
\section{Introduction and Statement of Results}
Recently, Sasakian geometry has seen significant interest due to its role in the AdS/CFT correspondence.  In particular, it is desirable to obtain a large class of Sasaki-Einstein manifolds.  A great deal of progress has been made toward answering the question of when Sasaki-Einstein metrics exist.  When the Sasaki manifold is regular, or quasi-regular (see \S 2 for definitions), much of the existence theory follows from the known results in K\"ahler geometry; we refer the reader to \cite{Sparks} and the references therein.  Until very recently, no examples of irregular Sasaki-Einstein manifolds were known to exist.  In fact, it was conjectured by Cheeger and Tian \cite{CheeTian} that any Sasaki-Einstein manifold must be quasi-regular.  However, this conjecture was disproved when irregular Sasaki-Einstein manifolds were discovered by Gauntlett, Martelli, Sparks and Waldram \cite{GMarSparW} coming from the physics surrounding the AdS/CFT correspondence.  This discovery necessitated a more robust approach to studying Sasaki-Einstein manifolds which did not require a Hausdorff topological structure on the leaf space.  Motivated by Calabi, another method for studying Sasaki-Einstein geometry via certain functionals related to the volume and scalar curvature was developed by Boyer, Galicki and Simanca in \cite{BoyGalSim} and Martelli, Sparks and Yau in \cite{MarSparYau}.  As in the K\"ahler case, when the basic first Chern class is positive there are known obstructions to the existence of Sasaki-Einstein metrics; see for example \cite{Futaki, GMarSparYau}.  It is reasonable to expect that a suitable generalization of the famous conjecture of Yau \cite{Yau} should hold in the Sasaki case; that is, existence of Sasaki-Einstein metrics should be equivalent to some geometric invariant theory notion of stability.  We refer the reader to \cite{PSstab} and the references therein for an introduction to this very active area of research. 

Another well known method for generating Einstein metrics on manifolds is the Ricci flow, introduced by Hamilton in \cite{Ham}, and extended to K\"ahler manifolds in \cite{Cao}.  Recently, this method was generalized to foliated Riemannian manifolds in \cite{Minoo}, and then applied to Sasaki manifolds in \cite{SmoWaZa}, where the authors were able to generalize the results of \cite{Cao}.  The Sasaki-Ricci flow may prove a very effective way of generating examples of Sasaki-Einstein manifolds, particularly once a robust existence theory is developed.  For example, in the K\"ahler case, when an Einstein metric is known to exist in the first Chern class, it was claimed by Perelman that  the K\"ahler-Ricci flow will converge to it (see \cite{PSstab, TianZhu} for a proof if there are no holomorphic vector fields).  This result was generalized in \cite{TianZhu} to include K\"ahler-Ricci solitions.  Moreover, there is a large body of work relating the various geometric notions of stability, as well as algebraic properties of the base manifold, to the convergence of the K\"ahler-Ricci flow; see for instance \cite{PSSW, PSesS,  PS, Valent}.   It would be desirable to extend these results to the Sasakian setting.  In this paper, we take a step in this direction.  In particular, we generalize some results of Perelman \cite{Pman, SesumTian} (see also \cite{BenChow, KleinerLott}), to Sasaki manifolds. 

Before outlining this paper, we remark that all of our results hold for irregular Sasaki manifolds.  In \S2 of this paper we define key notions of foliations, as well as Sasakian and transverse geometry.  We examine the relationship between the local transverse geometry and the global geometry.  A significant difficulty that arises in the irregular case is the absence of a transverse distance function which is a metric on the leaf space.  We define a transverse distance function which will be crucial to our developments, and investigate the properties of the ``balls" it defines.  In general, these sets can have pathological properties, but for sufficiently small radii we are able to identify them geometrically .

In \S 3 we define the Sasaki and transverse Ricci flows and state some of the known results. We also introduce special coordinates, developed in \cite{GKN}, which will aid us greatly in our subsequent developments.

In \S4 we introduce a new functional, which we call the transverse energy functional $\mathcal{F}^{T} : \mathfrak{Met}^{T}(S) \times C^{\infty}_{B}(S) \rightarrow \mathbb{R}$, given by
\begin{equation*}
\mathcal{F}^{T}(g,f) = \int_{S} (\text{R}^{T} + |\nabla f|^{2})e^{-f}d\mu.
\end{equation*}
We refer the reader to \S 2-4 for the relevant definitions.  We then prove:
\begin{Theorem}\label{trans F thm}
Let $(S,g_{0})$ be a Sasaki manifold, foliated by its Reeb field $\xi$.  Suppose that $g(t)$ is a solution of the transverse Ricci flow on $[0,T]$ with $g(0) = g_{0}$ and let $f_{T} \in C^{\infty}_{B}(S)$.  Then there is a solution $f(t)$ to the transverse backward heat equation
\begin{equation*}
\frac{\partial f}{\partial t} = -\Delta_{B}f + |\nabla f |^{2} - R^{T},
\end{equation*}
on $[0,T]$ with $f(T)=f_{T}$, and $f(t)$ basic for each $t \in [0,T]$.  Moreover, $\mathcal{F}^{T}(g(t), f(t))$ is monotonically increasing.
\end{Theorem}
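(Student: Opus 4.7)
My plan is to mimic Perelman's proof of monotonicity for the classical $\mathcal{F}$-functional, checking at each stage that everything remains within the category of basic (i.e.\ Reeb-invariant) objects and that transverse integration by parts introduces no contribution from the Reeb direction. The argument splits naturally into three pieces: solving the backward equation for $f$, showing the solution stays basic, and the monotonicity computation itself.

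For existence, reverse time by setting $\tau = T-t$; the equation for $f$ becomes
\begin{equation*}
\partial_\tau f = \Delta_B f - |\nabla f|^2 + R^T, \qquad f(0) = f_T,
\end{equation*}
which is linearised by $u = e^{-f}$ to the transverse conjugate heat equation $\partial_\tau u = \Delta_B u - R^T u$. On basic functions this is linear and uniformly parabolic in the transverse directions, and standard parabolic theory — formulated on the foliated manifold via the special coordinates of \cite{GKN} recalled in \S 3 — produces a unique smooth, strictly positive solution on $[0,T]$. Setting $f = -\log u$ then yields the required $f(t)$. To see that $f(t)$ remains basic, observe that the transverse Ricci flow preserves the Reeb foliation, so $\xi$ remains a Killing vector field generating an isometry group $\Phi_s$ of each $(S,g(t))$; the equation for $u$ is invariant under $\Phi_s^*$, and since $u(0)=e^{-f_T}$ is $\Phi_s$-invariant, uniqueness (equivalently, a maximum principle applied to the linear equation satisfied by $\xi u$) forces $u(t)$, and hence $f(t)$, to be basic for every $t$.

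The heart of the proof is the monotonicity computation. Along the transverse Ricci flow, the volume form $d\mu$ on $S$ evolves by a factor involving $R^T$; differentiating $\mathcal{F}^T(g(t),f(t))$ directly, using the evolution equations for $R^T$, for the transverse Christoffel symbols, and for $f$, and integrating by parts in the transverse directions, should give
\begin{equation*}
\frac{d}{dt}\mathcal{F}^T(g(t),f(t)) \;=\; 2\int_S \bigl|\mathrm{Ric}^T + \nabla^T\nabla^T f\bigr|^2 \, e^{-f}\,d\mu \;\geq\; 0.
\end{equation*}
Because all the integrands are basic, the pointwise identities $|\nabla f|^2 = |\nabla^T f|^2$ and $\Delta f = \Delta_B f$ hold, and the transverse divergence theorem for basic functions against $d\mu$ takes exactly the Riemannian form, so Perelman's original manipulation goes through verbatim.

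The main obstacle will be verifying that the transverse second Bianchi identity, the evolution equations for $R^T$ and for the transverse connection along the transverse Ricci flow, and the commutation formulas needed for the integrations by parts, all parallel their ordinary K\"ahler/Riemannian counterparts, even though the ambient Levi-Civita connection of $(S,g)$ is \emph{not} the connection of a transverse K\"ahler metric. The foliated coordinates of \cite{GKN} should reduce these statements locally to their classical K\"ahler analogues, after which the remainder of the argument is a formal rearrangement identical in form to Perelman's original computation.
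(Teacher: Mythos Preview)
Your existence argument for $f(t)$ is essentially the paper's: both linearise via $u=e^{-f}$ and solve the resulting linear parabolic equation, with basicness following from uniqueness. The paper phrases this slightly differently, building the solution locally on foliation charts and gluing by a maximum-principle uniqueness lemma, but the content is the same.

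For monotonicity, however, you take a genuinely different route. You propose to differentiate $\mathcal{F}^{T}(g(t),f(t))$ directly along the transverse Ricci flow and the backward heat equation, using the transverse evolution of $R^{T}$, transverse Bianchi, and integration by parts to arrive at $\int_{S}|\mathrm{Ric}^{T}+\nabla^{T}\nabla^{T}f|^{2}e^{-f}d\mu$; this is Perelman's original computation transplanted to the transverse setting, and it does go through once one checks (as you note) that the relevant local identities reduce in the preferred coordinates to their K\"ahler counterparts. The paper instead first computes the variation of $\mathcal{F}^{T}$ abstractly (Proposition~\ref{F var eqn}), reads off that $\mathcal{F}^{T}$ increases along the \emph{gradient-flow} pair~(\ref{pulled back metric})--(\ref{pulled back dilaton}), and then shows that the transverse Ricci flow together with the backward heat equation is obtained from this gradient flow by pulling back along a one-parameter family $\rho(t)\in\mathfrak{Diff}^{T}$ generated by $-\tfrac{1}{2}\nabla f$ (Proposition~\ref{coupled eqns prop}, Lemma~\ref{time dep. diffeo Lemma}); monotonicity then follows from $\mathfrak{Diff}^{T}$-invariance of $\mathcal{F}^{T}$ (Proposition~\ref{diff invar prop}). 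Your approach is shorter and avoids introducing $\mathfrak{Diff}^{T}$, but the paper's approach buys more: it exhibits the transverse Ricci flow as the gradient flow of $\mathcal{F}^{T}$ modulo foliation-preserving diffeomorphisms, which is one of the paper's advertised structural results and motivates the introduction of the class $\mathfrak{Met}^{T}(S)$.
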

A key feature of this new functional $\mathcal{F}^{T}$ is that we succeed in relating its gradient flow to the transverse Ricci flow via time dependent diffeomorphisms which preserve the foliation.  In particular, these diffeomorphisms fix the Reeb field and the integral curves of the Reeb field are geodesics with respect to the pulled-back metric.  However, the diffeomorphisms need not, in general, preserve the transverse holomorphic structure of the Sasaki manifold and this necessitates the consideration of a larger class of manifolds than the Sasaki manifolds.  We hope that this motivates the rather large number of definitions given in \S 2.  

In \S 5 we introduce another new functional, which is related to the functional $\mathcal{F}^{T}$ examined in \S 4.  We refer to this functional as the transverse entropy functional \newline $\mathcal{W}^{T}: \mathfrak{Met}^{T}(S) \times C^{\infty}_{B}(S) \times \mathbb{R}_{>0} \rightarrow \mathbb{R}$; it is given by
\begin{equation*}
\mathcal{W}^{T}(g,f,\tau) =(4\pi \tau)^{-n}\int_{S}\left( \tau(\text{R}^{T} + |\nabla f|^{2}) + (f-2n) \right) e^{-f}d\mu.
\end{equation*}
We then prove:
\begin{Theorem}\label{trans W thm}
Let $(S,g_{0})$ be a Sasaki manifold, foliated by its Reeb field $\xi$.  Suppose that $g(t)$ a solution of the transverse Ricci flow on $[0,T]$ with $g(0)=g_{0}$.  Let $\tau(t)$ be a positive function on $[0,T]$ with $\frac{d}{dt}\tau=-1$.  Let  $f_{T} \in C^{\infty}_{B}(S)$.  Then there is a solution $f(t)$ to the transverse backward heat equation
\begin{equation*}
\frac{\partial f}{\partial t} = -\Delta_{B}f + |\nabla f |^{2} - R^{T}+\frac{n}{\tau}
\end{equation*} 
on $[0,T]$ with $f(T)=f_{T}$, and $f(t)$ basic for each $t \in [0,T]$.  Moreover, $\mathcal{W}^{T}(g(t), f(t), \tau(t))$ is monotonically increasing. 
\end{Theorem}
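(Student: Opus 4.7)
The plan is to follow Perelman's outline for $\mathcal{W}$-entropy monotonicity along the Ricci flow, adapted to the transverse Ricci flow on a Sasaki manifold. The endpoint I aim for is a Perelman-type squared-norm identity, from which monotonicity is immediate; most of the work goes into constructing the backward heat equation solution and justifying the integration by parts in the foliated setting.

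First, I would produce the backward solution $f(t)$ by the substitution $u = (4\pi\tau)^{-n} e^{-f}$. A direct computation shows that $f$ satisfies the stated backward equation if and only if $u$ satisfies the forward transverse heat equation $\partial_t u = \Delta_B u - R^T u$. Since this is linear, and since both $\Delta_B$ and $R^T$ preserve basic functions (the former because $g(t)$ evolves under the transverse Ricci flow, the latter by definition), I can solve for $u$ as a positive basic function on $[0,T]$ with terminal data $u(T) = (4\pi\tau(T))^{-n}e^{-f_T}$ using standard parabolic theory in the foliated charts of Section 3, in which the equation is locally a uniformly parabolic PDE on the transverse slice with coefficients independent of the Reeb coordinate. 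Inverting the substitution yields the desired $f(t)$, and its basicness follows from that of $u$.

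Next, I would differentiate $\mathcal{W}^T$ in time. Along the transverse Ricci flow $\partial_t g^T_{\alpha\bar\beta} = -R^T_{\alpha\bar\beta}$, the Sasaki volume form evolves by $\partial_t d\mu = -R^T\, d\mu$ (the Reeb-direction factor $\eta(\xi)=1$ is unchanged), and the transverse scalar curvature satisfies the standard transverse analogue $\partial_t R^T = \Delta_B R^T + |\mathrm{Ric}^T|^2$. Combining these with the backward equation for $f$, and repeatedly integrating by parts against the weighted measure $e^{-f}\, d\mu$, the time derivative should rearrange into
\begin{equation*}
\frac{d}{dt}\mathcal{W}^T(g(t),f(t),\tau(t)) = 2\tau\, (4\pi\tau)^{-n}\int_{S} \Bigl| R^T_{\alpha\bar\beta} + \nabla_\alpha \nabla_{\bar\beta} f - \tfrac{1}{2\tau}\, g^T_{\alpha\bar\beta} \Bigr|^2 e^{-f}\, d\mu,
\end{equation*}
which is manifestly non-negative.

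The main obstacle is to justify rigorously the integration by parts and the transverse Bochner identity needed in the intermediate steps, particularly in the irregular case where the leaf space is not Hausdorff. My strategy is to perform the computation in the GKN-coordinates recalled in Section 3, in which the transverse geometry is locally a piece of a K\"ahler manifold and every basic quantity is constant in the Reeb coordinate. In such charts each identity reduces to its K\"ahler analogue on the transverse slice, and integration along the Reeb direction contributes only a fixed factor against the bundle-like measure, so that the foliated integration by parts is inherited from the classical case. Once this local-to-global reduction is carried out using the definitions of Section 2, the algebraic steps mirror Perelman's line by line, with $R_{ij}$ replaced by $R^T_{\alpha\bar\beta}$ and the full Laplacian replaced by $\Delta_B$.
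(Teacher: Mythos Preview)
Your proposal is correct in outline but takes a genuinely different route from the paper. The paper does \emph{not} differentiate $\mathcal{W}^T$ directly along the transverse Ricci flow. Instead, it first computes the variation of $\mathcal{W}^T$ on $\mathfrak{Met}^T(S)\times C^\infty_B(S)\times\mathbb{R}_{>0}$, identifies the gradient-flow system $\partial_t g^T=-(\mathrm{Ric}^T+D^2f)$, $\partial_t f=-\Delta_B f-R^T+n/\tau$, and then shows (Proposition~5.1, Lemma~4.1) that this system is obtained from the transverse Ricci flow plus your backward heat equation by pulling back under a one-parameter family $\rho(t)\in\mathfrak{Diff}^T$ generated by $-\tfrac12\nabla f$. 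Monotonicity then follows because $\mathcal{W}^T$ is $\mathfrak{Diff}^T$-invariant (Proposition~5.3). Your direct computation bypasses the whole diffeomorphism apparatus; what it loses is the structural statement that the transverse Ricci flow is, modulo $\mathfrak{Diff}^T$, the gradient flow of $\mathcal{W}^T$, which the paper explicitly wants (see the discussion around equation~(4.7)). What it gains is economy: you need no analysis of $\rho_*\xi=\xi$, bundle-likeness of $\rho^*g$, or invariance of $R^T$ under $\mathfrak{Diff}^T$.

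One point to correct in your endpoint formula: the squared term should involve the full transverse Hessian, not only its $(1,1)$ part. Carrying out your Bochner step in the real transverse indices (as the paper does in Proposition~4.1) yields
\[
\frac{d}{dt}\mathcal{W}^T=2\tau(4\pi\tau)^{-n}\int_S\Bigl|\,\mathrm{Ric}^T_{ij}+\nabla_i\nabla_j f-\tfrac{1}{2\tau}g^T_{ij}\Bigr|^2 e^{-f}\,d\mu,
\]
which in complex notation splits as your $(1,1)$ term plus an additional $|\nabla_\alpha\nabla_\beta f|^2$ contribution. This does not affect the sign, so monotonicity survives, but as written your identity is not the full Perelman formula. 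Your construction of $f(t)$ via $u=(4\pi\tau)^{-n}e^{-f}$ is exactly what the paper does (Propositions~4.3 and~5.2), and your use of the preferred local coordinates to reduce the foliated integration by parts to the K\"ahler case is sound.
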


Finally, we apply these results to extend Perelman's results on the K\"ahler-Ricci flow \cite{SesumTian} to the Sasaki setting.  Namely, we prove;

\begin{Theorem}\label{Pman thm}
Let $g^{T}(t)$ be a Sasaki-Ricci flow on a Sasaki manifold $(S,\xi)$ of real dimension $2n+1$, and transverse complex dimension n, with $c^{1}_{B}(S)>0$.  Let $u \in C^{\infty}_{B}(S)$ be the transverse Ricci potential.  Then there exists a uniform constant $C$, depending only on the initial metric $g(0)$ so that
\begin{equation}
|R^{T}(g(t))| + |u|_{C^{1}} + diam^{T}(S,g(t)) < C
\end{equation}
where $diam^{T}(S,g(t)) = \sup_{x,y\in S} d^{T}(x,y)$.
\end{Theorem}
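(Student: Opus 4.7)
My plan is to transport Perelman's argument for the Kähler-Ricci flow, in the form given by Sesum--Tian \cite{SesumTian}, into the Sasaki setting, using the monotonicity of the transverse entropy $\mathcal{W}^{T}$ from Theorem \ref{trans W thm} as the primary tool. The three asserted bounds are tightly coupled through a transverse non-collapsing principle and the behaviour of the transverse Ricci potential $u$.

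First, I would introduce the transverse Perelman invariants $\mu^{T}(g,\tau) := \inf \mathcal{W}^{T}(g,f,\tau)$ and the associated $\nu^{T}(g)$, where the infimum is taken over basic $f\in C^{\infty}_{B}(S)$ satisfying the standard $L^{1}$ normalization. Since the basic volume of $(S,g(t))$ is preserved along the (suitably normalized) Sasaki-Ricci flow, Theorem \ref{trans W thm} yields a uniform lower bound on $\mu^{T}(g(t),\tau)$ for each fixed $\tau>0$. From this lower bound I would extract a transverse $\kappa$-non-collapsing statement: there is $\kappa>0$ so that for every $r$ small enough that the transverse balls $B^{T}(x,r)$ are well-behaved (in the sense of \S 2), if $R^{T}\leq r^{-2}$ on $B^{T}(x,r)$ then $\mathrm{vol}(B^{T}(x,r))\geq \kappa r^{2n+1}$. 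Producing basic cutoff functions concentrated on $B^{T}(x,r)$ in the irregular case is the critical technical ingredient, and is where the special coordinates of \cite{GKN} recalled in \S 3 enter.

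Next, I would analyze the transverse Ricci potential $u\in C^{\infty}_{B}(S)$, characterized by $\rho^{T}-\omega^{T}=\sqrt{-1}\,\partial_{B}\bar\partial_{B}u$ together with a suitable normalization. Along the normalized Sasaki-Ricci flow $u$ satisfies a basic evolution equation of the form $\partial_{t} u = \Delta_{B} u + u - a(t)$, and one has $R^{T}=n+\Delta_{B} u$. Differentiating the quantity $R^{T}+|\nabla u|^{2}-u$ along the flow and applying the maximum principle to the resulting basic parabolic inequality, as in \cite{SesumTian,BenChow}, should yield a $C^{0}$ bound on $u$, and then a Shi-type gradient estimate for the basic heat operator $\partial_{t}-\Delta_{B}$ should upgrade this to a $C^{1}$ bound on $u$. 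The $C^{0}$ bound on $R^{T}$ then follows either directly via the maximum principle applied to the evolution of $R^{T}$, or from the bound on $\Delta_{B} u=R^{T}-n$ combined with coercivity arguments for the basic Laplacian.

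Finally, the transverse diameter bound follows by combining the non-collapsing estimate with the $C^{1}$ bound on $u$ and the finiteness of $\mathrm{vol}(S)$: if $\mathrm{diam}^{T}(S,g(t))$ were unbounded along some sequence of times, one could use the bound on $|\nabla u|$ and the transverse distance function $d^{T}$ to exhibit, at those times, arbitrarily many disjoint transverse balls of definite radius on which $|R^{T}|$ is controlled, and non-collapsing would force their total volume to exceed $\mathrm{vol}(S)$. The main obstacle throughout is the first step: in the irregular case there is no honest metric structure on the leaf space, so the non-collapsing argument must be run entirely on $S$ using the transverse distance function and the delicate description of small transverse balls from \S 2. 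Verifying that the logarithmic Sobolev inequality encoded in the lower bound on $\mu^{T}$ does in fact control $\mathrm{vol}(B^{T}(x,r))$ with the correct power of $r$—and doing so with basic, rather than general, test functions—is the genuinely new point beyond the Kähler case.
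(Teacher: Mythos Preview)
Your proposal has the overall shape of the Sesum--Tian/Perelman argument, but the order of the middle steps is inverted in a way that creates a genuine gap. The maximum-principle computation on quantities like $R^{T}+|\nabla u|^{2}-u$ does \emph{not} produce a two-sided $C^{0}$ bound on $u$; what it yields (Proposition~\ref{bounds by Cu} in the paper) is only the relative estimate $|\nabla u|^{2}+R^{T}\leq C(u+C)$, together with a uniform lower bound on $u$ coming from the normalization $\int_{S}e^{-u}d\mu=(4\pi)^{n}$ and the bound on $a$. From $|\nabla u|^{2}\leq C(u+C)$ one obtains $|\nabla\sqrt{u+C}|\leq C$, and integrating this along transverse paths from the point where $u$ is minimized gives $u(y)\leq C\, d^{T}(x,y)^{2}+C$ (Lemma~\ref{bounds by Cd}). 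Thus the upper bound on $u$, and with it the bounds on $|\nabla u|$ and $R^{T}$, all \emph{follow} from the transverse diameter bound rather than precede it. There is no independent Shi-type estimate to invoke at this stage, and your step ``$C^{0}$ bound on $u$ $\Rightarrow$ $C^{1}$ bound'' simply does not have the $C^{0}$ input available.

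This inversion then undercuts your proposed diameter argument. You want to pack many disjoint transverse balls on which $|R^{T}|$ is uniformly controlled, but at the moment you attempt this you only know $R^{T}(y)\leq C\,d^{T}(x,y)^{2}+C$, which grows along any putative long chain. Perelman's actual mechanism, reproduced in the paper (Lemmas~\ref{ST claim 3.2} and~\ref{ST Lemma 3.3}), is more delicate: one locates annuli $T(k_{1},k_{2})$ of small volume satisfying a doubling condition, uses the coarea formula together with the quadratic growth estimate on $|\nabla u|$ to show $\int_{T(r_{1},r_{2})}R^{T}\leq C\,\mathrm{Vol}(T(k_{1},k_{2}))$, and then plugs cutoff functions supported on these annuli into $\mathcal{W}^{T}$ to drive it to $-\infty$, contradicting the monotonicity of $\mu^{T}$. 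The non-collapsing enters in selecting the annuli, and the correct volume exponent for the transverse tubes is $2n$, not $2n+1$ (cf.\ Proposition~\ref{non-collapse 2} and the remark after it); your exponent would correspond to Riemannian balls rather than tubes about the Reeb orbit closure.
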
 

Again, we refer the reader to \S 2 and \S 3 for the relevant definitions.  The proof of this result relies heavily on our developments in \S 2-5.  We observe that Theorem~\ref{Pman thm} is evidence for the conjecture of Smoczyk, Wang, and Zhang \cite{SmoWaZa} that the Sasaki-Ricci flow converges, in some suitable sense, to a Sasaki-Ricci soliton, a notion introduced in \cite{Futaki}.

\begin{remark}
Very shortly after posting this paper on the arXiv, a paper by Weiyong He titled \emph{The Sasaki-Ricci flow and Compact Sasakian Manifolds of Positive Transverse Holomorphic Bisectional Curvature} (arXiv:1103.5807) appeared.  There is overlap in our results and methods, though He confronts the difficulties which arise in the irregular case differently.  He also considers the Sasaki-Ricci flow with positive, transverse bisectional curvature, while we obtain a non-collapsing result which holds for irregular Sasaki manifolds.
\end{remark}

{\bf Acknowledgements:}
I would like to thank my advisor Professor D.H. Phong for suggesting this problem, as well as his encouragement and advice in the writing of this paper.  I would also like to thank Professor Valentino Tosatti for many helpful conversations and for patiently reading a rather rough first draft.  The majority of the research and writing of this paper was carried out during a visit to the Centre International Rencontres Math\'ematiques, whose support I would like to graciously acknowledge.  I would like to thank the referee for many helpful suggestions.

\section{Background}
In this section we introduce the key concepts in the theory of foliations and Sasaki manifolds needed for our work.  We refer the reader to \cite{BoyGal1, Molino, Tond} for a thorough introduction to the subject of foliated manifolds, and transverse Riemannian geometry.  We refer the reader to \cite{BoyGal1, Sparks} for an introduction to Sasakian geometry.

\subsection{Riemannian Flows and Riemannian Foliations}
Let $S$ be a compact manifold of dimension $2n+1$, and $\xi$ a nowhere vanishing vector field on $S$.  The integral curves of $\xi$ generate a foliation of $S$ by one dimensional submanifolds.  Assume now that $(S,\xi,g)$ is a foliated Riemannian manifold, and denote by $L_{\xi}$ the subbundle of $TS$ generated by $\xi$.
\begin{definition}
A function $f \in C^{\infty}(S)$ is said to be \emph{basic} if $\mathcal{L}_{\xi}f=0$.  The set of smooth basic functions will be denoted by $C^{\infty}_{B}(S)$.
\end{definition}
\begin{definition}
A vector field $X$ on $S$ is said to be \emph{foliate} with respect to the foliation induced by $\xi$ if $[X,\xi] \in L_{\xi}$.
\end{definition}
\begin{definition}\label{bundle-like}
A Riemannian metric g is said to be \emph{bundle-like} with respect to the foliation induced by $\xi$ if for any open set $U$ and foliate vector fields $X$, $Y$ on $U$ perpendicular to $L_{\xi}$, the function $g(X,Y)$ is basic.
\end{definition}
The foliation induced by $\xi$ on $S$ is said to be a Riemannian foliation if $S$ admits a bundle-like metric $g$.  Throughout this paper we will remain firmly in the category of Riemannian foliations.  Riemannian foliations admit global transverse Riemannian structures which reflect the geometry of the local Riemannian quotients.  That is, given a foliated Riemannian manifold $(S,\xi,g)$ we obtain an exact sequence
\begin{equation}\label{exact seq}
0\rightarrow L_{\xi} \rightarrow TS \xrightarrow{p} Q \rightarrow 0
\end{equation}
where $Q=TS/L_{\xi}$.  The metric $g$ defines an orthogonal splitting of this sequence $Q\xrightarrow{\sigma}TS$, so that we may write
\begin{equation*}
TS = L_{\xi}^{\perp} \oplus L_{\xi}.
\end{equation*}
By identifying $L_{\xi}^{\perp}\cong Q$, we get a metric $g^{T}$ on $Q$ by restricting $g$ to $L_{\xi}^{\perp}$.  In general, however, this metric will \emph{not} yield a metric on the local Riemannan quotient. In fact, if $U$ is a small open set, and $\pi:U\rightarrow \bar{U}$ is the quotient map to the local Riemannian quotient, then $g^{T}=\pi^{*}\tilde{g}$ for some metric $\tilde{g}$  on $\bar{U}$ if and only if $g$ is bundle-like.  In the case that $(S,\xi,g)$ is a Riemannian foliation, we see the above procedure produces a global metric $g^{T}$ on $Q$ which is given locally by the pull-back of a metric from the local Riemannian quotient.
If $(S,\xi,g)$ is a Riemannian foliation such that $g(\xi,\xi)=1$, and the integral curves of $\xi$ are geodesics, then the foliation is said to be a \emph{Riemannian flow}.  A key feature of manifolds foliated by a Riemannian flow is that for any point $p \in S$ we can construct local coordinates in a neighbourhood of $p$ which are simultaneously foliated, and Riemann normal coordinates.  That is, we can find Riemann normal coordinates $\{x,y_{1},\dots,y_{2n}\}$ on a neighbourhood $U$ of $p$, such that $\frac{\partial}{\partial x} =\xi$ on $U$.

\subsection{Sasaki Manifolds and Contact structures}
A Sasakian manifold of dimension $2n+1$ is a Riemannian manifold $(S^{2n+1},g)$ with the property that its metric cone $(C(S) = \mathbb{R}_{>0}\times S, \overline{g} = dr^2 + r^{2}g)$ is K\"ahler.  A Sasakian manifold inherits a number of key properties from its K\"ahler cone.  In particular, an important role is played by the Reeb vector field.  
\begin{definition}
The Reeb vector field is $\xi = J(r\partial_{r})$, where $J$ denotes the integrable complex structure on $C(S)$.  
\end{definition}
The restriction of $\xi$ to the slice $\{r=1\}$  is a unit length Killing vector field, and its orbits define a one-dimensional foliation of S by geodesics called the Reeb foliation.  There is a dual one-form $\eta$ defined by $\eta = id^{c}\log r = i (\overline{\partial} -\partial)\log r$, which has the properties
\begin{equation*}
\eta(\xi)=1, \text{  } \iota_{\xi}d\eta=0.
\end{equation*}
Moreover, we have that for all vector fields X
\begin{equation*}
\eta(X) = \frac{1}{r^{2}}\overline{g}(\xi,X).
\end{equation*}
The K\"ahler form on $C(S)$ is then given by $\omega = \frac{1}{2}d(r^{2}\eta)$.  The 1-form $\eta$ restricts to a 1-form $\eta |_{S}$ on $S \subset C(S)$.  Using the fact the $\mathcal{L}_{r\partial_{r}}\eta =0$ one shows that in fact $\eta = p^{*}(\eta |_{S})$ where $p$ is the canonical projection from $C(S)$ to $S$.  By abuse of notation, we do not distinguish between $\eta$ and its restriction to $S$.  Since the K\"{a}hler 2-form $\omega$ is non-vanishing, it follows that the top degree form $\eta \wedge (d\eta)^{n}$ is non-vanishing on $S$, and hence $(S,\eta)$ is a contact manifold.

Let $L_{\xi}$ be the line bundle spanned by the non-vanishing vector field $\xi$.  The contact subbundle $D\subset TS$ is defined as $D= \ker \eta$.  We then have the exact sequence~(\ref{exact seq}).  The Sasakian metric $g$ gives an orthogonal splitting of this sequence so that we identify $Q \cong D$, and
\begin{equation*}
TS = D\oplus L_{\xi}
\end{equation*}
Define a section $\Phi \in End(TS)$ via the equation $\Phi(X) = \nabla_{X} \xi$.  One can then check that $\Phi |_{D} = J |_{D}$ and $\Phi |_{L_{\xi}} = 0$, and that
\begin{equation*}
\Phi^{2} = -1 + \eta \otimes \xi \text{  } \text{, and   } \text{  } g(\Phi(X), \Phi(Y)) = g(X,Y)-\eta(X)\eta(Y)
\end{equation*}
for any vector fields $X$ and $Y$ on S.  The second equation says that $g |_{D}$ is a Hermitian metric on D.  Since
\begin{equation}\label{metric}
g(X,Y) = \frac{1}{2} d\eta (X, \Phi (Y)) + \eta(X)\eta(Y),
\end{equation}
 we see that $\frac{1}{2} d\eta |_{D}$ is the fundamental 2-form associated to $g |_{D}$.  The triple $(D, \Phi |_{D}, d\eta)$ gives S a transverse K\"{a}hler structure.  We take this up in the following.

\subsection{Transverse K\"{a}hler structures and the Reeb foliation}

The Reeb foliation inherits a transverse holomorphic structure and K\"{a}hler metric in the following (explicit) way.  The leaf space of the foliation induced by the Reeb field can clearly be identified with the leaf space of the \emph{holomorphic} vector field $\xi -iJ(\xi)$ on the cone $C(S)$.  Here, $J$ denotes the integrable complex structure on the cone.  By using holomorphic, foliated coordinates on $C(S)$, we may introduce a foliation chart  $\{U_{\alpha}\}$ on $S$, where each $U_{\alpha}$ is of the form $U_{\alpha} = I \times V_{\alpha}$ with $I\subset \mathbb{R}$ and open interval, and $V_{\alpha} \subset \mathbb{C}^{n}$.  We can find coordinates $(x,z_{1}, \dots, z_{n})$ on $U_{\alpha}$, where $\xi=\partial_{x}$, and $z_{1},\dots,z_{n}$ are complex coordinates of $V_{\alpha}$.  The fact that the cone is complex implies that the transition functions between the $V_{\alpha}$ are holomorphic.  More precisely, if $(y, w_{1}, \dots, w_{n})$ are similarly defined  coordinates on $U_{\beta}$ with $U_{\alpha} \cap U_{\beta} \neq \emptyset$, then
\begin{equation*}
\frac{\partial z_{i}}{\partial \overline{w}_{j}} = 0 \text{ ,  } \text{  } \text{  } \frac{\partial z_{i}}{\partial y} = 0.
\end{equation*}
Recall that the subbundle $D$ is equipped with the almost complex structure $J |_{D}$, so that on $D\otimes \mathbb{C}$ we may define the $\pm i$ eigenspaces of $J |_{D}$ as the $(1,0)$ and $(0,1)$ vectors respectively.  Then, in the above foliation chart, $(D \otimes \mathbb{C})^{(1,0)}$ is spanned by $\partial_{z_{i}} -\eta(\partial_{z_{i}})\xi$.
Since $\xi$ is a killing vector field it follows that $g|_{D}$ gives a well-defined Hermitian metric $g_{\alpha}^{T}$ on the patch $V_{\alpha}$.  Moreover, ~(\ref{metric}) implies that
\begin{equation*}
d\eta(\partial_{z_{i}} -\eta(\partial_{z_{i}})\xi, \partial_{\overline{z}_{j}} -\eta(\partial_{\overline{z}_{j}})\xi)=d\eta(\partial_{z_{i}},\partial_{\overline{z}_{j}}).
\end{equation*}
Thus, the fundamental 2-form $\omega_{\alpha}^{T}$ for the Hermitian metric $g_{\alpha}^{T}$ in the patch $V_{\alpha}$ is obtained by restricting $\frac{1}{2}d\eta$ to a fibre $\{x=constant\}$.  It follows that $\omega_{\alpha}^{T}$ is closed, and the transverse metric $g_{\alpha}^{T}$ is K\"{a}hler.  Note that in the chart $U_{\alpha}$ we may write
\begin{equation*}
\eta = dx + i \sum_{i=1}^{n}\partial_{z_{i}}K_{\alpha}dz_{i} -i\sum_{i=1}^{n}\partial_{\overline{z}_{i}}K_{\alpha}d\overline{z}_{i}
\end{equation*}
where $K_{\alpha}$ is a function of $U_{\alpha}$ with $\partial_{x}K_{\alpha}=0$.  We may identify the collection of transverse metrics $\{g_{\alpha}^{T}\}$ with the global tensor field on S given by
\begin{equation*}
g^{T}(X,Y) = \frac{1}{2}d\eta(X, \Phi(Y))
\end{equation*}
and the  transverse K\"{a}hler form $\omega^{T}$ is similarly defined globally as $\frac{1}{2}d\eta$.

Sasaki manifolds fall in to three categories based on the orbits of the Reeb field.  If the orbits of the Reeb field are all closed, then the $\xi$ generates a locally free, isometric $U(1)$ action on $(S,g)$.  If the $U(1)$ action is free, then $(S,g)$ is said to be \emph{regular} and the quotient manifold $S/U(1)$ is K\"ahler.  If the action is not free, then $(S,g)$ is said to be \emph{quasi-regular}, and the quotient manifold is a K\"ahler orbifold.  If the orbits of $\xi$ are not closed, then the Sasakian manifold $(S,g)$ is said to be irregular.   Carri\`ere \cite{Carr} has shown that the leaf closures are diffeomorphic to tori, and that the Reeb flow is conjugate via this diffeomorphism to a linear flow on the torus.  The local flow of the Reeb field defines  a commutative subgroup of the isometry group of $(S,g)$ whose closure is a torus $\mathfrak{T}$.  The dimension of $\mathfrak{T}$ is called the rank of $(S,g)$, denoted $rk(S,g)$, and is an invariant of the Pfaffian structure $(S,\xi)$.  If $(S,g)$ has dimension 2n+1, then one can show (see \cite{BoyGal}) that $1\leq rk(S,g) \leq n+1$.  In particular, for any $p \in S$, $\overline{orb_{\xi}p}$ is an imbedded torus of dimension less or equal $n+1$.  Here $orb_{\xi}p$ denotes the orbit of $p$ under the action generated by $\xi$.

\subsection{Transverse Geometry and Basic Cohomology }
We return now to the subject of transverse Riemannian geometry.  Suppose that $(S,\xi,g)$ is a manifold, foliated by the local flow induced by $\xi$, and $g$ is a bundle-like metric.  Then the quotient bundle $Q$ defined by~(\ref{exact seq}) is endowed with a transverse metric $g^{T}$.  Moreover, there is a unique, torsion-free connection on $Q$ which is compatible with the metric $g^{T}$.  This connection is defined by
\begin{displaymath}
\nabla^{T}_{X}V = \left\{ \begin{array}{ll}
	\left(\nabla_{X}\sigma(V)\right)^{p},& \text{if } X \text{ is a section of D}\\
	\left[\xi, \sigma(V)\right]^{p}, &\text{if } X=\xi
\end{array}
\right.
\end{displaymath}
where $\nabla$ is the Levi-Civita connection on $(S,g)$, $\sigma$ is the splitting map induced by $g$, and $p$ is the projection from $TS$ to the quotient $Q$.  This connection is called the \emph{transverse Levi-Civita connection} as it satisfies
\begin{equation*}
\nabla_{X}^{T}Y - \nabla^{T}_{Y}X - [X,Y]^{p}=0,
\end{equation*}
\begin{equation*}
Xg^{T}(V,W) = g^{T}(\nabla_{X}^{T}V,W)+ g^{T}(V, \nabla^{T}_{X}W).
\end{equation*}
In this way it is easy to see that the transverse Levi-Civita connection is the pullback of the Levi-Civita connection on the local Riemannian quotient.  We can then define the transverse curvature operator by
\begin{equation*}
Rm^{T}(X,Y) = \nabla_{X}^{T}\nabla_{Y}^{T} -\nabla_{Y}^{T}\nabla_{X}^{T}-\nabla_{[X,Y]}^{T},
\end{equation*}
and one can similarily define the transverse Ricci curvature, and the transverse scalar curvature.  There is also a notion of transverse cohomology, which we introduce now.
\begin{definition}
A $p$-form $\alpha$ on $(S,\xi)$ is called basic if $\iota_{\xi}\alpha=0$, and $\mathcal{L}_{\xi}\alpha =0$.
\end{definition}
On $(S,\xi)$ we let $\Lambda_{B}^{p}$ be the sheaf of basic $p$-forms, and $\Omega^{p}_{B} = \Gamma(S, \Lambda_{B}^{p})$ the global sections.  It is clear that the de Rham differential $d$ preserves basic forms, and hence restricts to a well defined operator $d_{B}:\Lambda^{p}_{B}\rightarrow \Lambda^{p+1}_{B}$.  We thus get a complex
\begin{equation*}
0\rightarrow C^{\infty}_{B}(S)\rightarrow \Omega^{1}_{B}\xrightarrow{d_{B}}\cdots \xrightarrow{d_{B}} \Omega^{2n}_{B} \xrightarrow{d_{B}} 0
\end{equation*}
whose cohomology groups, denoted by $H^{p}_{B}(S)$, are the basic de Rham cohomology groups.  Moreover, we can define the basic Laplacian on forms by
\begin{equation*}
-\Delta_{B} = d_{B}d_{B}^{\dagger} + d_{B}^{\dagger}d_{B}.
\end{equation*}
In general, the basic Laplacian \emph{does not} agree with the restriction of the de Rham Laplacian to basic forms \cite{KamTond}.  However, the basic Laplacian on functions \emph{does} agree with the restriction of the metric Laplacian to basic functions, and one can check that
\begin{equation*}
\Delta_{B}=\Delta |_{C^{\infty}_{B}(S)} = (g^{T})^{ij}\nabla_{i}\nabla_{j}
\end{equation*}
If $(S,g)$ is Sasakian, then the transverse complex structure $\Phi$ allows us to decompose
\begin{equation*}
\Lambda^{r}_{B}\otimes \mathbb{C} = \mathop{\oplus}_{p+q=r} \Lambda^{p,q}_{B}.
\end{equation*}
We can then decompose $d_{B} = \partial_{B} + \overline{\partial}_{B}$, where
\begin{equation*}
 \partial_{B}: \Lambda^{p,q}_{B}\rightarrow \Lambda^{p+1,q}_{B}, \text{ and }\text{   } \overline{\partial}_{B}:\Lambda^{p,q}_{B}\rightarrow \Lambda^{p,q+1}_{B}.
 \end{equation*}
Consider the form $\rho^{T}= Ric^{T}(\Phi \cdot, \cdot)$, which is called the \emph{transverse Ricci form}.  In analogy with the K\"ahler case, one can check that
\begin{equation*}
\rho^{T} = -\sqrt{-1}\partial_{B} \overline{\partial}_{B} \log \det (g^{T})
\end{equation*}
and hence $\rho^{T}$ defines a basic cohomology class, $[\rho^{T}]_{B}$, which is called the \emph{basic first Chern class}, and is independent of the transverse metric.  A transverse metric $g^{T}$ is called a \emph{transverse Einstein metric} if it satisfies
\begin{equation*}
Ric^{T}=cg^{T}
\end{equation*}
for some constant $c$.  In particular, we observe that it is necessary that the basic first Chern class be signed.
\begin{definition}
For $x,y \in S$ we define the transverse distance function $d^{T}: S\times S \rightarrow \mathbb{R}$ by
\begin{equation}
d^{T}(x,y) = \inf_{\{p\in orb_{\xi}x, q \in orb_{\xi}y\}} dist(p,q)
\end{equation}
\end{definition}
An important property of the transverse distance function is the following elementary lemma.
\begin{Lemma}
Fix $ z \in S$, and define a function $h:S\rightarrow \mathbb {R}$ by $h(y) = d^{T}(z,y)$.  Then $h$ is basic, and Lipschitz, with $|h(x)-h(y)|\leq dist(x,y)$.  In particular, $h$ is $a.e.$ differentiable, with $|\nabla h| \leq 1$.
\end{Lemma}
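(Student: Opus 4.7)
The plan is to reduce the transverse distance to an ordinary distance-to-orbit function. The key observation is that on a Sasaki manifold the Reeb field $\xi$ is a Killing vector field (cf.\ \S 2.2), so its flow $\phi_{t}$ acts by isometries of $(S,g)$. Since $\phi_{t}$ preserves $orb_{\xi}z$ setwise, the function $q\mapsto dist(q,orb_{\xi}z)$ is $\phi_{t}$-invariant, hence constant along each $\xi$-orbit. Therefore
\begin{equation*}
h(y)=\inf_{q\in orb_{\xi}y}\inf_{p\in orb_{\xi}z}dist(p,q)=\inf_{q\in orb_{\xi}y}dist(q,orb_{\xi}z)=dist(y,orb_{\xi}z),
\end{equation*}
identifying $h$ with the standard distance function to the subset $orb_{\xi}z$ of $(S,g)$, from which all three claims will follow by well-known arguments.

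Since $orb_{\xi}(\phi_{t}(y))=orb_{\xi}y$ for every $t$, the value $h(y)=dist(y,orb_{\xi}z)$ depends on $y$ only through $orb_{\xi}y$; thus $h$ is constant on leaves and in particular $\mathcal{L}_{\xi}h=0$, so $h$ is basic. For the Lipschitz estimate, fix $x,y\in S$ and $\varepsilon>0$, and pick $p\in orb_{\xi}z$ with $dist(x,p)<h(x)+\varepsilon$; then
\begin{equation*}
h(y)\leq dist(y,p)\leq dist(y,x)+dist(x,p)<dist(x,y)+h(x)+\varepsilon.
\end{equation*}
Letting $\varepsilon\to 0$ and interchanging $x$ and $y$ yields $|h(x)-h(y)|\leq dist(x,y)$. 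Finally, by Rademacher's theorem any Lipschitz function on a Riemannian manifold is differentiable almost everywhere, and at every point of differentiability the norm of the gradient is bounded by the Lipschitz constant, so $|\nabla h|\leq 1$ a.e.

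There is no real obstacle in this argument; the only delicate point is the opening reduction $h(y)=dist(y,orb_{\xi}z)$, which uses crucially that $\xi$ generates a one-parameter group of isometries. This is automatic in the Sasaki setting but would fail for a general Riemannian flow, so the identification would have to be revisited in greater generality.
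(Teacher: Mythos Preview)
Your proof is correct. The paper does not actually supply a proof of this lemma, labeling it ``elementary,'' so there is nothing to compare against; your argument fills in the details in the natural way. The reduction $h(y)=dist(y,orb_{\xi}z)$ via the Killing property of $\xi$ is exactly the right move, after which everything is the standard distance-to-a-set argument together with Rademacher's theorem.

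One minor comment on your closing caveat: while you are right that the Killing property is what drives the argument, note that the paper invokes it freely throughout \S 2 (e.g.\ in the proof of the lemma on orbit closures, where ``the Reeb field is Killing'' is used explicitly), and every metric subsequently considered lies in $\mathfrak{Met}^{T}(S)$, which by definition satisfies $\mathcal{L}_{\xi}g=0$. So within the paper the Killing hypothesis is always in force, not only in the strictly Sasakian case; your proof therefore applies without modification to the full setting of the paper.
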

Note that in general, we can not expect any regularity better than Lipschitz.  In fact, if the Reeb field is irregular, then the transverse distance function fails to be a metric on the leaf space, and in general, can fail to be differentiable.  To see the kind of pathology that can occur in this case, consider a solid torus with a central $S^{1}$, foliated by tori, which are the orbits of irrational curves.  We regard the solid torus as a subset of $\mathbb{R}^{3}$ with the induced metric. In this case, it is clear that the distance function fails to be $C^{1}$, and that every point on the same torus is distance 0 from every other point, including points \emph{not} on the same orbit.  In the regular and quasi-regular cases, the transverse distance function defines a metric on the leaf space.  The analogue of a closed geodesic ball is;
\begin{definition}
The closed, transverse tube of radius $r$ about $x \in S$ is the set
\begin{equation*}
T(x,r) = \{y \in S | d^{T}(x,y) \leq r\}.
\end{equation*}
\end{definition}
Again, it is instructive to keep in mind the pathology that can occur when the Reeb field has non-closed orbits.  If $p$ is a point in $S$ with non-closed orbit, then its closure defines an imbedded torus of dimension $q\leq n+1$.  The transverse tube $T(p,r)$, is then the union of transverse tubes about {\it every} point in the torus.  We are the led to consider the geometry of such sets.  To make this plain, let $(M, g)$ be a Riemannian manifold and $P$ be a topologically imbedded submanifold, possibly with boundary.
\begin{definition}
The closed geodesic tube around $P$ of radius $r$ is the set
\begin{equation*}
T_{geo}(P,r) = \left\{ m \in S | \text{  }\exists \text{ a geodesic }\gamma \text{ of length }L(\gamma)\leq r \text{ from }m \text{ meeting } P \text{ orthogonally } \right\}.
\end{equation*}
Observe that when $r\leq inj(S)$, then we can write
\begin{equation*}
T_{geo}(P,r) = \bigcup_{p\in P } \left\{exp_{p}(v) | v\in T_{p}P^{\perp} \text{ and } \|v\| \leq r \right\}
\end{equation*}
\end{definition}

It is a perhaps surprising fact that, for sufficiently small radius, transverse tubes and geodesic tubes are equivalent. 

\begin{prop}\label{transverse v geo prop}
Let $p\in S$, and $P = \overline{ orb_{\xi}p}$ be the torus defined by $p$.  If $r \leq inj(S)$, then
\begin{equation*}
T(p,r) =  T_{geo}(P,r)
\end{equation*}
\end{prop}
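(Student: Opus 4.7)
The plan is to exploit the earlier observation that the closure of the one-parameter Reeb flow inside the isometry group of $(S,g)$ is a compact abelian torus $\mathfrak{T}$, so that the embedded torus $P = \overline{orb_{\xi}p}$ is precisely the $\mathfrak{T}$-orbit of $p$. This has two useful consequences: the infimum in the definition of $d^T$ can be replaced by the distance between the compact submanifolds $P$ and $Q := \overline{orb_{\xi}y} = \mathfrak{T}\cdot y$; and any element of $\mathfrak{T}$ acts as an isometry of $S$ preserving $P$, so one may use such isometries to transport minimizing geodesics while retaining orthogonality to $P$.

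For the easy inclusion $T_{geo}(P,r) \subseteq T(p,r)$, I would take $y \in T_{geo}(P,r)$ with a geodesic $\gamma$ of length $\ell \leq r$ from some $q \in P$ to $y$ meeting $P$ orthogonally. Since $q$ lies in the closure of $orb_{\xi}p$, pick $p_n \in orb_{\xi}p$ with $p_n \to q$; then $d^T(p,y) \leq d(p_n,y) \to d(q,y) \leq \ell \leq r$, so $y \in T(p,r)$. This half uses neither the isometric action nor the injectivity radius hypothesis.

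For the substantive inclusion $T(p,r) \subseteq T_{geo}(P,r)$, take $y \in T(p,r)$. Continuity of the ambient distance together with compactness of $P$ and $Q$ gives
\[
d^T(p,y) = \inf_{a\in P,\,b\in Q} d(a,b) = d(P,Q) \leq r,
\]
and the infimum is attained at some pair $(q,q') \in P \times Q$ with $d(q,q') \leq r \leq inj(S)$. Let $\gamma:[0,\ell] \to S$ be the unique minimizing geodesic from $q$ to $q'$. Varying $q$ smoothly within $P$ cannot decrease $d(\cdot, q')$ beyond $d(P,Q)$, so the first variation formula forces $\gamma'(0) \perp T_q P$. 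Now choose $\phi \in \mathfrak{T}$ with $\phi(q')=y$, which exists since $y, q' \in Q = \mathfrak{T}\cdot y$. Because $\phi$ is an isometry of $S$ with $\phi(P)=P$, the curve $\phi\circ\gamma$ is a geodesic of length $\ell \leq r$ from $\phi(q) \in P$ to $y$, and orthogonality is preserved, so $y \in T_{geo}(P,r)$.

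The main obstacle I anticipate is not a single hard calculation but careful bookkeeping around the torus action: verifying that $P$ really is the $\mathfrak{T}$-orbit of $p$ (so that one may legitimately replace $orb_{\xi}p$-infima by infima over the compact submanifold $P$), interchanging the infimum over orbits with the infimum over their closures, and using the hypothesis $\ell \leq inj(S)$ to guarantee a unique minimizing geodesic depending smoothly on its endpoints so that the first-variation argument applies. Once those pieces are in place, the isometric $\mathfrak{T}$-action does the rest of the work for free.
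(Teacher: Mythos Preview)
Your proposal is correct and proceeds along a somewhat different route than the paper. The easy inclusion $T_{geo}(P,r)\subseteq T(p,r)$ is handled identically. For the reverse inclusion, the paper works only with the Reeb flow: it locates a minimizing pair $(y,z)\in P\times \overline{orb_{\xi}x}$, obtains the orthogonal geodesic, concludes $z\in T_{geo}(P,r)$, and then needs an auxiliary lemma (if $z\in\overline{orb_{\xi}x}$ then $x\in\overline{orb_{\xi}z}$) together with the fact that the Killing flow carries $z$'s orthogonal geodesic along, to push $x$ itself into the closed set $T_{geo}(P,r)$. You instead invoke the full closed torus $\mathfrak{T}\subset\mathrm{Isom}(S,g)$, identify $P=\mathfrak{T}\cdot p$ and $Q=\mathfrak{T}\cdot y$, and transport the minimizing geodesic directly to terminate at $y$ by a single element $\phi\in\mathfrak{T}$. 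This buys you a cleaner one-step argument with no orbit-closure lemma; the cost is that you must justify $P=\mathfrak{T}\cdot p$ (immediate from compactness of $\mathfrak{T}$ and continuity of the orbit map) and $\phi(P)=P$ (automatic since $\phi\in\mathfrak{T}$). The paper's approach has the virtue of isolating a general fact about Riemannian flows that may be of independent use, while yours makes the geometry of the situation more transparent.
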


\begin{proof}
First we prove the easy inclusion.  Assume that $x \in T_{geo}(P,r)$.  Then there is a $y \in P$, and $v \in T_{y}P^{\perp}$ with $ \|v \| \leq r$ such that $ x = exp_{y}(v) $.  Fix $\epsilon >0$, and choose $y_{\epsilon} \in orb_{\xi}p$ such that $d(y_{\epsilon}, y) < \epsilon$.  Then,
\begin{equation*}
d^{T}(p,x) = d^{T}(y_{\epsilon}, x) \leq d(y_{\epsilon}, x) \leq r+\epsilon.
\end{equation*}
Since this holds for every positive $\epsilon$, we get $x\in T(p,r)$.

To prove the reverse inclusion assume that $x \in T(p,r)$. By definition of $d^{T}$, we can find a $y \in P$ and $z \in \overline{ orb_{\xi}x}$ such that $d^{T}(z,y) \leq r $ and $d^{T}(x,y)= d(z,y)$.  Since $r\leq inj(S)$, there is a unique geodesic $\gamma$ joining $y$ to $z$ which has $L(\gamma) = d(z,y)$.  By an exercise in differential geometry $\gamma'(0) \in  T_{y}P^{\perp}$.  Thus, $z \in T_{geo}(P,r)$.  If $orb_{\xi}x$ is closed, then we are done, so assume otherwise.  We need a lemma.

\begin{Lemma}\label{points not closed}
If $z \in \overline{orb_{\xi}x}$, then $x\in \overline{orb_{\xi}z}$.  In particular, $orb_{\xi}z$ is not closed.
\end{Lemma}
\begin{proof}
This follows immediately from the fact that the Reeb field generates a Riemannian flow.  First, it is clear that if $z \in \overline{orb_{\xi}x}$, then $orb_{\xi}z \subset \overline{orb_{\xi}x}$, as the Reeb field is Killing.  Now, given a sequence $\{x_{n}\}\subset orb_{\xi}x$ converging to $z$, we generate a sequence $\{z_{n}\} \subset orb_{\xi}z$ converging to $x$ by composing with the diffeomorphism $\phi_{n}$ generated by $\xi$ so that $\phi_{n}(x_{n})=x$.  We then define $z_{n} = \phi_{n}(z)$.  That $z_{n}$ converges to $x$ follows immediately from the fact that $\phi_{n}$ is an isometry.
\end{proof}

Now, since $z \in T_{geo}(P,r)$, it is clear that $orb_{\xi}z \subset T_{geo}(P,r)$, as $\xi$ is Killing, and thus  $\overline{orb_{\xi}z }\subset T_{geo}(P,r)$, as $T_{geo}(P,r)$ is closed.  By Lemma~\ref{points not closed}, we see that $x\in T_{geo}(P,r)$.
\end{proof}
It is to our advantage that a great deal of work has been done to understand the relationship between the geometry of a tubular set around a submanifold $P$, and the ambient manifold $M$.  In particular, we have the following asymptotic expansion for the volume of a geodesic tube:
\begin{Theorem}[\cite{Gray} p. 203, Theorem 9.23]\label{Gray Thm}
Let $M$ be a manifold of dimension $n$, and $P\subset M$ a submanifold of dimension $q$. Let $\mathbb{V}^{M}_{P}(r)$ be the volume of a geodesic tube about $P$.  Then the following asymptotic expansion for the volume holds;
\begin{equation}
\mathbb{V}^{M}_{P}(r) = \frac{(\pi r^{2})^{\frac{1}{2}(n-q)}}{(\frac{1}{2}(n-q)!)}Vol(P)\left(1 + O(r^{2})\right).\end{equation}
\end{Theorem}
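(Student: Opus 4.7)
The plan is to reduce the computation to a Fermi coordinate calculation around $P$. For $r$ less than the normal injectivity radius of $P$ in $M$, the normal exponential map restricts to a diffeomorphism $\exp^{\perp}:D_{r}(\nu P)\to T_{geo}(P,r)$, where $D_{r}(\nu P)$ is the radius-$r$ disk bundle of the normal bundle. Consequently
\begin{equation*}
\mathbb{V}^{M}_{P}(r)=\int_{D_{r}(\nu P)}(\exp^{\perp})^{*}dV_{M},
\end{equation*}
and the whole expansion becomes a local question over $P$, to be analyzed fiber-by-fiber.

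Next I would work in Fermi coordinates. On a chart $U\subset P$ with coordinates $(u^{1},\dots,u^{q})$, pick an orthonormal frame $\{e_{1},\dots,e_{n-q}\}$ for $\nu P|_{U}$ and set $\Psi(u,t)=\exp_{p(u)}\bigl(\sum_{\alpha}t^{\alpha}e_{\alpha}(u)\bigr)$ for $t\in B^{n-q}_{r}$. Along $\{t=0\}$ the pullback metric $\Psi^{*}g$ is block-diagonal: the tangential block is $g_{P}$, the normal block is $\delta_{\alpha\beta}$, and by the Gauss lemma for the normal exponential map the mixed block vanishes. A standard Jacobi field Taylor expansion along the normal geodesics $t\mapsto\exp_{p(u)}(te_{\alpha})$ then gives
\begin{equation*}
\sqrt{\det\Psi^{*}g(u,t)}=\sqrt{\det g_{P}(u)}\,\bigl(1+A(u)\!\cdot\! t+O(|t|^{2})\bigr),
\end{equation*}
where $A(u)$ is a linear functional on the normal fiber expressible in terms of the mean curvature vector of $P$ at $p(u)$.

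Finally, integrate first in $t$ over the symmetric ball $B^{n-q}_{r}$: the linear term $A(u)\cdot t$ kills by parity, yielding
\begin{equation*}
\int_{B^{n-q}_{r}}\!\!\sqrt{\det\Psi^{*}g(u,t)}\,dt=\sqrt{\det g_{P}(u)}\,\mathrm{Vol}(B^{n-q}_{r})\bigl(1+O(r^{2})\bigr).
\end{equation*}
Integrating in $u$ over $P$ (gluing charts with a partition of unity) produces $\mathbb{V}^{M}_{P}(r)=\mathrm{Vol}(P)\cdot\mathrm{Vol}(B^{n-q}_{r})\cdot(1+O(r^{2}))$, and substituting the Euclidean ball volume $\mathrm{Vol}(B^{n-q}_{r})=\pi^{(n-q)/2}r^{n-q}/\Gamma\!\bigl(\tfrac{n-q}{2}+1\bigr)$ gives the stated formula, with the factorial in the denominator read as the appropriate Gamma value in odd codimension.

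The main obstacle is justifying the density expansion in Fermi coordinates, which requires tracking the first two derivatives at $t=0$ of each entry of $\Psi^{*}g$ via Jacobi fields and identifying them with invariants of $P\subset M$ (second fundamental form and ambient curvature). Everything else—the diffeomorphism from the disk bundle, the parity cancellation of the $O(t)$ piece, and the ball volume formula—is essentially formal. For the leading-order statement with an $O(r^{2})$ remainder, one never needs the explicit curvature coefficients, only the qualitative structure of the Taylor expansion plus the parity argument.
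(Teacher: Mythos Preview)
Your sketch is a correct outline of the standard Fermi-coordinate proof of the tube volume expansion. However, the paper does not give its own proof of this statement: it is quoted verbatim as Theorem~9.23 from Gray's monograph \emph{Tubes} and used as a black box, so there is no ``paper's proof'' to compare against. The approach you describe---pulling back the volume form through the normal exponential map, Taylor-expanding the Jacobian via Jacobi fields along normal geodesics, killing the linear term by parity over the symmetric normal ball, and integrating over $P$---is precisely the method Gray employs in the cited reference, so in that sense your proposal matches the source the paper defers to.
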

It is of note that in \cite{Gray} the first and second coefficients in the expansion are computed.  However, we will not need them.

\section{The Sasaki-Ricci flow, and the Transverse Ricci flow}
We now define the Sasaki-Ricci flow, and the transverse Ricci flow.  Given a Riemannian foliation $(S,\xi, g)$ we define the Transverse Ricci flow by
\begin{equation}\label{T-Ricci}
\frac{\partial g^T}{\partial t} = -Ric^{T}.
\end{equation}
The short time existence for the flow~(\ref{T-Ricci}) was established in \cite{Minoo}.  On a Sasakian manifold, we can exploit the transverse K\"ahler structure to introduce another flow for the transverse metric as follows;  fix an initial Sasaki metric $g_{o}=d\eta_{o}$ such that $\kappa[d\eta_{o}]_{B} = c^{1}_{B} = [Ric^{T}]_{B}$.  Using the transverse $\partial \bar{\partial}$-lemma of \cite{ElKac}, there is a basic function $F:S\rightarrow \mathbb{R}$ such that
\begin{equation*}
Ric^{T} = \kappa d\eta_{o} + d_{B}d_{B}^{c}F.
\end{equation*}
We then define the Sasaki-Ricci flow by
\begin{equation}\label{SRF}
\frac{\partial g^T}{\partial t} = -Ric_{g(t)}^{T} + \kappa g^{T}(t),
\end{equation}
which can be expressed locally as a parabolic Monge-Amp\`ere equation on transverse K\"ahler potentials $\phi$ via
\begin{equation}\label{parabolic MA}
\frac{\partial \phi}{\partial t} = \log \det(g_{\bar{k}l}^{T}+\partial_{l}\partial_{\bar{k}}\phi)- \log \det(g_{\bar{k}l}^{T})+\kappa \phi -F.
\end{equation}

It was proved in \cite{SmoWaZa} that this flow is well-posed.  Moreover, they showed that this flow preserves the Sasakian structure of $S$, the solution $\phi$ exists for all time, remains basic, and converges exponentially if $c^{1}_{B} \leq 0$.  Observe that one can pass from a solution to~(\ref{T-Ricci}) and a solution to the Sasaki-Ricci flow~(\ref{SRF}) via the usual method of dilating the metric and scaling time.  In particular, the results of \cite{SmoWaZa} imply that if the initial metric $g_{o}$ is Sasakian, then the solution to~(\ref{T-Ricci}) exists on $[0,\kappa^{-1})$ and remains Sasakian.  It will be important for us that these two flows are interchangeable.

We make a brief comment on volume forms.  For a contact manifold $(S,\eta)$, the top form $\eta\wedge (d\eta)^{n}$ defines a volume form on $S$.  If, on the other hand, $(S,\xi,g)$ is a foliated Riemannian manifold with the foliation induced by $\xi$, bundle-like metric $g$ and transverse metric $g^{T}$,  then we can use the standard Riemannian volume form on $S$.  Now, if $(S,g)$ is a Sasaki manifold, then $S$ is both a contact manifold, and a foliated Riemannian manifold, with metric $g=\eta\otimes\eta + d\eta(\cdot, \Phi\cdot)$, and induced transverse metric $g^{T}(X,Y) = d\eta(X,\Phi Y)$; it is an easy exercise to check that the volume form $\eta\wedge (d\eta)^{n}$ agrees with the Riemannian volume form.  Suppose now that the transverse metric $g^{T}$, is evolving by the Sasaki-Ricci flow.  Then the evolved contact structure is given by
\begin{equation*}
\eta_{t}= \eta_{o} +d_{B}^{c}\phi(t)
\end{equation*}
for a basic function $\phi(t)$. The evolved volume form is given by
\begin{equation*}
\eta_{t}\wedge (d\eta_{t})^{n} = \left(\eta_{o} +d_{B}^{c}\phi(t)\right)\wedge (d\eta_{t})^{n}= \eta_{o} \wedge (d\eta_{t})^{n}+ d_{B}^{c}\phi(t)\wedge (d\eta_{t})^{n}
\end{equation*}
By observing that $(d\eta_{t})^{n}\wedge d_{B}^{c}\phi(t)$ is a basic form of degree $2n+1$, we see immediately that
\begin{equation*}
\eta_{t}\wedge(d\eta_{t})^{n} = \eta_{o}\wedge(d\eta_{t})^{n} =  \det(g_{\bar{k}l}^{T}+\partial_{l}\partial_{\bar{k}}\phi).
\end{equation*}
See, for example, Lemma 5.2 of \cite{SmoWaZa}.  In particular, the canonical volume forms arising from the contact, and foliated Riemannian structures on a Sasaki manifold agree with the local transverse K\"ahler volume form.  The following lemma is an easy consequence of the evolution equation for the Sasaki-Ricci flow.

\begin{Lemma}
The volume of $S$ is preserved by the Sasaki-Ricci flow.
\end{Lemma}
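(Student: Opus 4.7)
My plan is to differentiate the volume $\mathrm{Vol}(S) = \int_S \eta_t \wedge (d\eta_t)^n$ directly in time and show the derivative vanishes via the basic degree restriction plus Stokes' theorem. The only facts I need are that under the Sasaki--Ricci flow the contact form evolves as $\eta_t = \eta_0 + d_B^c \phi(t)$ for a basic function $\phi(t)$, together with the degree constraint that a basic form on $S^{2n+1}$ of degree $>2n$ must vanish (since basic forms are horizontal with respect to the one-dimensional Reeb foliation).

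Differentiating and using $\partial_t \eta_t = d_B^c \dot\phi$ and $\partial_t (d\eta_t) = d\, d_B^c \dot\phi$ gives
\begin{equation*}
\frac{d}{dt}\mathrm{Vol}(S) = \int_S d_B^c \dot\phi \wedge (d\eta_t)^n + n \int_S \eta_t \wedge (d\eta_t)^{n-1} \wedge d\, d_B^c \dot\phi.
\end{equation*}
The first integrand is a basic form of degree $2n+1$ on $S$, hence identically zero by the observation above. For the second, I would apply Stokes on the closed manifold $S$ to the $(2n)$-form $\eta_t \wedge (d\eta_t)^{n-1} \wedge d_B^c \dot\phi$. Its exterior derivative expands as $(d\eta_t)^n \wedge d_B^c \dot\phi \; - \; \eta_t \wedge (d\eta_t)^{n-1} \wedge d\, d_B^c \dot\phi$, where the first term is again basic of degree $2n+1$ and hence vanishes pointwise. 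Stokes then forces the remaining term to integrate to zero, so $\frac{d}{dt}\mathrm{Vol}(S)=0$.

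There is no real obstacle beyond tracking signs in the Stokes computation; the argument is essentially that the Sasaki--Ricci flow evolves $[\omega^T]_B$ trivially in basic cohomology, and the total volume, which depends on $\eta_t \wedge (d\eta_t)^n$ only through basic-cohomological data together with $\eta_0$, is therefore invariant. One could alternatively give a cohomological version of the same argument, noting that $d\eta_t - d\eta_0 = d\, d_B^c \phi(t)$ is exact and expanding the binomial $(d\eta_t)^n = (d\eta_0 + d\, d_B^c\phi)^n$ before integrating against $\eta_0$ (which is legitimate since $\eta_t - \eta_0$ contributes only basic top-degree terms that vanish), but the direct time derivative above is cleanest.
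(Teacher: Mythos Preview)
Your proof is correct and is precisely the argument the paper has set up: the vanishing of basic $(2n{+}1)$-forms and the identity $\eta_t\wedge(d\eta_t)^n=\eta_0\wedge(d\eta_t)^n$ established in the paragraph immediately preceding the lemma reduce everything to the Stokes computation you carry out. The paper itself gives no proof beyond calling the lemma an ``easy consequence of the evolution equation,'' and your derivation simply makes that consequence explicit.
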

We now make a brief digression on coordinates.  Let $(S,g)$ be a Sasaki manifold, $p\in S$ a point.  One can choose local coordinates $(x, z^{1},\dots,z^{n})$ on a small neighbourhood $p\in U$ such that
\begin{enumerate}
\item[\textbullet] $\xi = \frac{\partial}{\partial x}$
\item[\textbullet] $ \eta = dx + \sqrt{-1}\sum_{j=1}^{n}h_{j}dz^{j} - \sqrt{-1}\sum_{j=1}^{n}h_{\bar{j}}d\bar{z}^{j}$
\item[\textbullet] $ \Phi = \sqrt{-1} \left\{ \sum_{j=1}^{n}\left( \frac{\partial}{\partial z^{j}} -\sqrt{-1}h_{j}\frac{\partial}{\partial x}\right)\otimes dz^{j} -  \sum_{j=1}^{n}\left( \frac{\partial}{\partial \bar{z}^{j}} +\sqrt{-1}h_{\bar{j}}\frac{\partial}{\partial x}\right)\otimes d\bar{z}^{j}\right\} $
\item[\textbullet] $g= \eta \otimes \eta + 2\sum_{j,l=1}^{n}h_{j\bar{l}}dz^{j}d\bar{z}^{l}$
\end{enumerate}
where $h: U\rightarrow \mathbb{R}$ is a local, basic function (ie.  $\frac{\partial}{\partial x} h=0$), and we have used $h_{j} = \frac{\partial }{\partial z^{j}}h$, and $h_{j\bar{l}} =  \frac{\partial^{2} }{\partial z^{j}\partial \bar{z}^{l}}h$.  See, for example, \cite{GKN}.  We can additionally assume that in these coordinates $h_{j}(p)$=0.  An important observation is that the transverse Christoffel symbols with mixed barred and unbarred indices are identically zero.  Moreover, the pure barred and unbarred Christoffel symbols are given by the familiar formula for K\"ahler geometry
\begin{equation*}
\Gamma_{ij}^{k} = (g^{T})^{k\bar{p}}\partial_{i}g^{T}_{\bar{p} j}.
\end{equation*}
Throughout this paper we will refer to these coordinates as \emph{preferred local coordinates}.  From the local formulae, the preferred local coordinates show immediately that $\frac{\partial}{\partial x} g_{ij} = \frac{\partial}{\partial x}g^{T}_{ij} =0$.  Moreover, if $\phi$ evolves by the transverse parabolic Monge-Amp\`ere equation~(\ref{parabolic MA}), then local coordinate expressions for the evolved Sasaki metric $g(t)$ in the preferred local coordinates are obtained by replacing $h$ with $h+\frac{1}{2}\phi$.  We refer the reader to \cite{SmoWaZa} for more useful formulae which hold in these coordinates.

\section{The Transverse Energy Functional}

The aim of this section is to prove Theorem~\ref{trans F thm}.  In the course of the proof we will see that the transverse Ricci flow is related to the gradient flow of the $\mathcal{F}^{T}$ functional via diffeomorphisms.  This is in exact analogy with the Ricci flow.  However, if $(S,g)$ is Sasakian, the diffeomorphisms which relate the two flows need not preserve the transverse holomorphic structure of the manifold.  This necessitates a consideration of a larger class of manifolds, namely manifolds foliated by geodesics. Given a Riemannian foliation $(S,\xi)$, the class of metrics for which the foliation is generated by a Riemannian flow is described in the following definition.
\begin{definition}\label{met(S) def}
Let $(S,\xi)$  be a Riemannian foliation. We define the space $\mathfrak{Met}^{T}(S)$ to be
\begin{equation*}
\mathfrak{Met}^{T}(S,\xi) = \left\{ g\in C^{\infty}(S, S^{2}T^{*}M) | g>0, g(\xi,\xi)=1, \mathcal{L}_{\xi}g=0\right\}.
\end{equation*}
\end{definition}
\begin{remark}
Note that  if $g\in \mathfrak{Met}^{T}(S,\xi)$, then $g$ is bundle-like.  Moreover, it is a fact that if $g \in \mathfrak{Met}^{T}(S)$, then the integral curves of $\xi$ are geodesics.  In particular, for a general one-dimensional foliation the set $\mathfrak{Met}^{T}(S,\xi)$ may be empty.  However, in all applications we consider $\mathfrak{Met}^{T}(S,\xi)$ will not be empty.
\end{remark}
In order to lighten notation, when the vector field $\xi$ generating the foliation is understood, we will denote $\mathfrak{Met}^{T}(S,\xi)$ by $\mathfrak{Met}^{T}(S)$.  

By the above remarks, we see that every metric $g \in \mathfrak{Met}^{T}(S)$ induces a transverse metric $g^{T}$.  In fact, the opposite also holds.  Given a transverse metric $g^{T}$ on the quotient bundle $Q$, let $g \in \mathfrak{Met}^{T}(S)$ and define a metric $\tilde{g} \in \mathfrak{Met}^{T}(S)$ as follows; the metric $g$ defines an orthogonal decomposition $TS = L_{\xi} \oplus L_{\xi}^{\perp}$, hence any vector $X$ may be decomposed as $X = X^{\perp} + X^{\xi}$ where $X^{\perp} \in L_{\xi}^{\perp}$, and $X^{\xi} \in L_{\xi}$.  We then define the metric $\tilde{g}$ by
\begin{equation*}
\tilde{g}(X,Y) = g(X^{\xi}, Y^{\xi}) + g^{T}(X^{\perp}, Y^{\perp}).
\end{equation*}
It is an easy exercise to check that $\tilde{g} \in \mathfrak{Met}^{T}(S)$, and that $\tilde{g}$ induces the transverse metric $g^{T}$.

\begin{definition}\label{def F functional}
We define the transverse energy functional $\mathcal{F}^{T} : \mathfrak{Met}^{T}(S) \times C^{\infty}_{B}(S) \rightarrow \mathbb{R}$ by
\begin{equation*}
\mathcal{F}^{T}(g,f) = \int_{S} (\text{R}^{T} + |\nabla f|^{2})e^{-f}d\mu.
\end{equation*}
where $d\mu$ is the volume form associated to the metric $g$.
\end{definition}
\begin{remark}
We remark that while we have defined the functional $\mathcal{F}^{T}$ in terms of the full metric $g \in \mathfrak{Met}^{T}(S)$, it clearly only depends on the induced transverse metric.
\end{remark}
Our first task is to compute the variation of $\mathcal{F}^{T}$.  In order to do this, we need to define the space of variations which preserve $\mathfrak{Met}^{T}(S)$.  We observe that if $v\in C^{\infty}(S, S^{2}T^{*}S)$ has $v(\xi,\xi)=0$, and $\mathcal{L}_{\xi}v =0$, then for $t$ sufficiently small, $g+tv \in \mathfrak{Met}^{T}(S)$.  This motivates the following definition:
\begin{definition}
The (formal) tangent space of $\mathfrak{Met}^{T}(S)$, is the set
\begin{equation*}
T\mathfrak{Met}^{T}(S) = \left\{v\in C^{\infty}(S, S^{2}T^{*}S) : v(\xi,\xi)=0, \text{ and } \mathcal{L}_{\xi}v =0 \right\}.
\end{equation*}
\end{definition}
We can now compute the variation of the transverse energy functional.  

\begin{prop}\label{F var eqn}
If $g\in \mathfrak{Met}^{T}$, and $v\in T\mathfrak{Met}^{T}(S)$, and $h$ is a basic function then
\begin{equation*}
\delta \mathcal{F}^{T}(v,h) =\int_{S}e^{-f}\left(-v_{ij}\left(Ric^{T}_{ij}+\nabla_{i}\nabla_{j}f\right) + (v-h)(2\Delta_{B}f- |\nabla f|^{2} +R^{T})e^{-f} \right)d\mu.
\end{equation*}
\end{prop}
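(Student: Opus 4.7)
The plan is to carry out a Perelman-style calculation, varying each piece of $\mathcal{F}^{T}$ term by term and then integrating by parts against the weight $e^{-f}d\mu$, but inside the foliated/transverse setting. To begin, I invoke the remark following Definition~\ref{def F functional}: since $\mathcal{F}^{T}$ depends only on the induced transverse metric, I may replace $v$ by its purely transverse part and assume without loss of generality that $v(\xi,\cdot)\equiv 0$. (An infinitesimal $\xi$-mixing component of $v$ only tilts the splitting $TS = L_{\xi}\oplus L_{\xi}^{\perp}$, leaving $\delta g^{T}$ unchanged to first order.) Writing $V := g^{ij}v_{ij}$ for the (transverse) trace of $v$, I split
\begin{equation*}
\delta \mathcal{F}^{T}(v,h) = \int_{S}\bigl(\delta R^{T}+\delta|\nabla f|^{2}\bigr)\,e^{-f}\,d\mu + \int_{S}(R^{T}+|\nabla f|^{2})\,\delta\bigl(e^{-f}d\mu\bigr),
\end{equation*}
and compute each term separately.

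The pointwise variations mimic the Riemannian case. Because $g(\xi,\xi)=1$ is fixed by any $v\in T\mathfrak{Met}^{T}(S)$, the determinantal identity gives $\delta(d\mu)=\tfrac{V}{2}\,d\mu$ and trivially $\delta(e^{-f})=-h\,e^{-f}$. For the kinetic term, since $f$ is basic its gradient $\nabla f$ is transverse, so
\begin{equation*}
\delta|\nabla f|^{2} = -v^{ij}\nabla_{i}f\,\nabla_{j}f + 2\langle\nabla h,\nabla f\rangle.
\end{equation*}
The key identity is the transverse analogue of the first variation of scalar curvature,
\begin{equation*}
\delta R^{T} = -\Delta_{B}V + \nabla^{i}\nabla^{j}v_{ij} - v_{ij}R^{T}_{ij},
\end{equation*}
which I would justify by passing to the preferred local coordinates of \S3: there $R^{T}$, $Ric^{T}$, and $\nabla^{T}$ coincide with the ordinary K\"ahler-geometric objects attached to the local transverse metric, and the identity reduces to the classical Riemannian first variation formula applied to that slice.

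Next I integrate by parts against the weight $e^{-f}d\mu$, using the basic identity $\int_{S}\Delta_{B}\phi\,e^{-f}d\mu = \int_{S}\langle\nabla\phi,\nabla f\rangle\,e^{-f}d\mu$ valid for basic $\phi$. This converts $-\int\Delta_{B}V\,e^{-f}d\mu$ into $\int V(\Delta_{B}f-|\nabla f|^{2})\,e^{-f}d\mu$, converts $\int\nabla^{i}\nabla^{j}v_{ij}\,e^{-f}d\mu$ into $\int v_{ij}\bigl(\nabla^{i}f\,\nabla^{j}f-\nabla^{i}\nabla^{j}f\bigr)\,e^{-f}d\mu$, and converts $\int 2\langle\nabla h,\nabla f\rangle\,e^{-f}d\mu$ into $2\int h(|\nabla f|^{2}-\Delta_{B}f)\,e^{-f}d\mu$. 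The two copies of $v^{ij}\nabla_{i}f\,\nabla_{j}f$ now cancel; collecting the remaining $v_{ij}$ contributions produces the Bakry--Emery-type combination $-v_{ij}(R^{T}_{ij}+\nabla_{i}\nabla_{j}f)$, while the residual $V$-linear and $h$-linear terms assemble into $(\tfrac{V}{2}-h)(2\Delta_{B}f-|\nabla f|^{2}+R^{T})$, reproducing the claimed formula.

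The principal obstacle is not the algebra but the legitimacy of these manoeuvres on a possibly irregular Riemannian foliation. The saving observation is that every integrand appearing is basic: $f$ and $h$ are basic by hypothesis, $\mathcal{L}_{\xi}v=\mathcal{L}_{\xi}g=0$, and the canonical volume form of $S$ factors through the transverse K\"ahler volume as recorded in \S3. Thus the identities established locally in the preferred coordinates glue to global identities on $S$ with no $\xi$-direction boundary contributions, and the whole argument is in essence a transverse rewriting of Perelman's original variation of $\mathcal{F}$.
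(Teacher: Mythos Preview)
Your argument is correct and follows essentially the same route as the paper: compute the first variation of $R^{T}$, of $|\nabla f|^{2}$, and of the weighted volume element in local foliated coordinates, then integrate by parts against $e^{-f}d\mu$. The paper carries out the pointwise computation of $\delta R^{T}$ in Riemann normal/foliated coordinates at a point (rather than the preferred holomorphic coordinates of \S3 you invoke), but this is only a cosmetic difference, and your treatment of the integration-by-parts step is in fact more explicit than the paper's. One small remark: your final collection yields $(\tfrac{V}{2}-h)$ in the scalar factor, which is the standard Perelman normalization, whereas the paper writes $(v-h)$ with $v=\mathrm{tr}_{g}v_{ij}$ and records $\delta d\mu = g^{ij}v_{ij}\,d\mu$ without the $\tfrac12$; this is a harmless convention/typo and does not affect the subsequent monotonicity argument.
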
 
\begin{proof}
Let $g \in \mathfrak{Met}^{T}(S)$, and fix $p \in S$.  By the remark following Definition~\ref{met(S) def}, we can find normal coordinates $\{x,y_{1}, \dots, y_{2n}\}$ in a neighbourhood $U$ of $p$ for the metric $g$ such that $\frac{\partial}{\partial x} = \xi$, and $\{\frac{\partial}{\partial y_{1}}|_{p}, \dots, \frac{\partial}{\partial y_{2n}}|_{p}\}$ is an orthonormal basis for $D(p)$.  In these coordinates, $g$ takes the form
\[ \left( \begin{array}{ccccc}
1 & g_{01} & g_{02} &\dots & g_{02n}  \\
g_{10} & \widetilde{g_{11}}+ g_{01}^{2} & \widetilde{g_{12}}+ g_{01}g_{02} & \dots & \widetilde{g_{12n}}+ g_{01}g_{02n}  \\
\vdots & \vdots & \vdots &\ddots&\vdots \\
g_{2n0}&  \widetilde{g_{2n1}}+ g_{02n}g_{01} &  \widetilde{g_{2n2}}+ g_{02n}g_{02} & \dots &  \widetilde{g_{2n2n}}+ g_{02n}^{2}  
\end{array} \right)\]
where $\widetilde{g_{ij}}$ is the $(i,j)$-th component of $g^{T}$, and we have used the index $0$ to denote the $dx$ components.  The variation $v$ is of the form
 \[ \left( \begin{array}{cccc}
0 & v_{01} & \dots & v_{02n}  \\
v_{10} & v_{11}& \dots & v_{12n} \\
\vdots & \vdots & \ddots&\vdots \\
v_{2n0}&  v_{2n1}& \dots & v_{2n2n}  
\end{array} \right)\]
where $\frac{\partial v_{ij}}{\partial x} =0$. In particular, observe that $\delta g^{T}_{ij}(p) = v_{ij}(p)$.  For $1 \leq i,j,k \leq 2n$.  The connection coefficients for the transverse Levi-Civita connection are
\begin{equation*}
\begin{aligned}
&\widetilde{\Gamma}_{ij}^{k} = \frac{1}{2} (g^{T})^{kl}\left(\partial_{i}g^{T}_{jl}+\partial_{j}g^{T}_{il} - \partial_{l}g^{T}_{ij}\right) \text{ for } 1 \leq i,j,k \leq 2n\\ &\widetilde{\Gamma}_{0j}^{k} = 0 \text{ for } 1 \leq j,k \leq 2n.
\end{aligned}
\end{equation*} 
Thus, at $p$, we have
\begin{equation*}
\begin{aligned}
\delta R^{T} = &\sum_{i,j =1}^{2n} -v_{ij} \left(\partial_{k}\widetilde{\Gamma}_{ij}^{k} - \partial_{j}\widetilde{\Gamma}_{ik}^{k}\right) + \partial_{k}\partial_{i}v_{ik} - \partial_{k}\partial_{k}v_{ii} \\&= \sum_{i,j =1}^{2n} -v_{ij} Ric^{T}_{ij} +\nabla_{k}\nabla_{i}v_{ik} - \Delta tr_{g}v_{i,j}
\end{aligned}
\end{equation*}
We now compute the variation of the volume form, to get
\begin{equation*}
\delta d\mu = g^{ij}v_{ij} d\mu = tr_{g}v_{ij} d\mu.
\end{equation*}
Setting $v= tr_{g}v_{ij} = tr_{g^{T}}v_{ij}$, it follows easily that
\begin{equation*}
\delta \mathcal{F}^{T}_{(v,h)} = \int_{S}e^{-f}\left(-v_{ij}\left(Ric^{T}_{ij}+\nabla_{i}\nabla_{j}f\right) + (v-h)(2\Delta_{B}f- |\nabla f|^{2} +R^{T}) \right)d\mu.
\end{equation*}
\end{proof}

Note that if $g^{T}$ and $f$ evolve by
\begin{equation}\label{pulled back metric}
\frac{\partial {g}^{T}}{\partial t} = -(Ric^{T} + D^{2}{f})
\end{equation}
\begin{equation}\label{pulled back dilaton}
\frac{\partial f}{\partial t} = -\Delta_{B}{f} - R^{T}
\end{equation}
\emph{and}, $(g(t),f(t)) \in \mathfrak{Met}^{T}(S) \times C^{\infty}_{B}(S)$, then $\mathcal{F}^{T}((g(t),f(t))$ is increasing, and
\begin{equation*}
\frac{d}{dt}\mathcal{F}^{T}(g(t),f(t))= \int_{S}|Ric^{T} +D^{2}f|^{2}e^{-f}d\mu.
\end{equation*}
As in the Ricci flow, the gradient flow equations~(\ref{pulled back metric}), and~(\ref{pulled back dilaton}), are related to the transverse Ricci flow via time-dependent diffeomorphisms.

\begin{prop}\label{coupled eqns prop}
Given a solution $(g(t),f(t)) \in \mathfrak{Met}^{T}(S) \times C^{\infty}_{B} $ to
\begin{equation*}
\frac{\partial g^{T}}{\partial t} = -Ric^{T},
\end{equation*}
\begin{equation}\label{TBHE}
\frac{\partial f}{\partial t} = -\Delta_{B}f + |\nabla f |^{2} - R^{T}
\end{equation}
with $(g(t),f(t))$ defined on $[0,T]$, $f(t)$ basic, $g(0)$ Sasakian, define a 1-parameter family of diffeomorphisms $\rho(t):S\rightarrow S$ by
\begin{equation}\label{ODE}
\frac{\partial \rho}{\partial t} = -\frac{1}{2}\nabla_{g(t)}f(t), \text{  } \text{  } \rho(0) = id_{S}
\end{equation}
which is a system of ODE admitting a solution on $[0,T]$.  Then the pulled back metrics $\bar{g}(t) = \rho(t)^{*}g(t)$ are bundle-like, and hence induce transverse metrics $\bar{g}^{T}(t)$.  The transverse metrics $\bar{g}^{T}(t)$ and the pulled back dilaton $\bar{f}(t) = \rho(t)^{*}f(t)$ satisfy equations~(\ref{pulled back metric}), and~(\ref{pulled back dilaton}) respectively.  Moreover, $\bar{f}$ is basic, and $\bar{g}(t) \in \mathfrak{Met}^{T}(S)$.
\end{prop}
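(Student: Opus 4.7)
The plan is a DeTurck-type argument in the foliated setting: the transverse Ricci flow pulled back by a time-dependent family of foliation-preserving diffeomorphisms will yield the coupled gradient-flow system. The construction has two essentially independent ingredients: (i) showing that $\rho(t)$ preserves the Reeb foliation, so that the pulled-back data remain within $\mathfrak{Met}^T(S)\times C^\infty_B(S)$; and (ii) computing the evolutions via the standard pullback formula for time-dependent diffeomorphisms. Step (i) is the main obstacle, and it is where the foliated hypotheses $g(t) \in \mathfrak{Met}^T(S)$ and $f(t)\in C^\infty_B(S)$ play a decisive role.

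Existence of $\rho(t)$ on $[0,T]$ is immediate since $X(t) = -\tfrac12 \nabla_{g(t)} f(t)$ is a smooth time-dependent vector field on the compact manifold $S$. The key structural observation is that $\nabla_{g(t)} f(t)$ is simultaneously horizontal and $\xi$-invariant. Horizontality is automatic from $f$ basic: $g(\xi, \nabla f) = \xi(f) = 0$, so $\nabla f \in L_\xi^{\perp}$. For $\xi$-invariance, apply $\mathcal{L}_\xi$ to the duality relation $g(\nabla f,\cdot) = df$ and use $\mathcal{L}_\xi g = 0$ (from $g \in \mathfrak{Met}^T(S)$) together with $\mathcal{L}_\xi df = d(\mathcal{L}_\xi f) = 0$; this forces $\mathcal{L}_\xi \nabla f = 0$, i.e.\ $[\xi, X(t)] = 0$. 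Consequently the flow of $X(t)$ commutes with the flow of $\xi$, and $\rho(t)_* \xi = \xi$ for all $t \in [0,T]$.

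With $\rho(t)_* \xi = \xi$ in hand, the remaining claims follow routinely. Since $\rho(t)$ commutes with the flow of $\xi$, $\rho^*$ commutes with $\mathcal{L}_\xi$; therefore $\mathcal{L}_\xi \bar g = \rho^* \mathcal{L}_\xi g = 0$ and $\mathcal{L}_\xi \bar f = \rho^* \mathcal{L}_\xi f = 0$, making $\bar f$ basic; and $\bar g(\xi,\xi)(p) = g(\xi,\xi)(\rho(p)) = 1$, so $\bar g \in \mathfrak{Met}^T(S)$. For the evolution equations, apply the identity
$$\frac{d}{dt}\rho_t^* T(t) = \rho_t^*\bigl(\partial_t T + \mathcal{L}_{X(t)} T\bigr)$$
to $T = g^T$ and $T = f$. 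Direct computation gives $(\mathcal{L}_X g)_{ij} = -\nabla_i \nabla_j f$ and $X(f) = -\tfrac12 |\nabla f|^2$. Substituting the transverse Ricci flow $\partial_t g^T = -Ric^T$ and the transverse backward heat equation~(\ref{TBHE}), and using diffeomorphism invariance of $Ric^T$, the Hessian, and the basic Laplacian, produces the required equations~(\ref{pulled back metric}) and~(\ref{pulled back dilaton}) for $\bar g^T$ and $\bar f$.
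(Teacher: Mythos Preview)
Your argument is correct and follows the same overall strategy as the paper: establish that $\rho(t)$ preserves the Reeb foliation, then invoke the standard pullback computation. The principal difference lies in how you prove $\rho(t)_*\xi = \xi$. The paper argues in preferred local coordinates $(x, z_1, \ldots, z_n)$: since $f$ and $g$ are independent of the leaf coordinate $x$, so is $-\tfrac12\nabla_{g(t)} f(t)$, and one reads off directly that its flow commutes with translation in $x$; this is then globalized by patching charts along the trajectory. You instead give the invariant version: $\mathcal{L}_\xi g = 0$ and $\mathcal{L}_\xi df = 0$ force $[\xi, X(t)] = 0$, whence the time-dependent flow of $X$ commutes with the flow of $\xi$. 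Your route is cleaner and coordinate-free; the paper's has the minor advantage of making the local form of $\rho^*\eta$ and $\rho^*g$ explicit, which it then uses to verify bundle-likeness by hand, whereas you deduce $\bar g \in \mathfrak{Met}^T(S)$ directly from $\mathcal{L}_\xi \bar g = \rho^*\mathcal{L}_\xi g = 0$ and $\bar g(\xi,\xi)=1$, and invoke the remark that membership in $\mathfrak{Met}^T(S)$ already implies bundle-likeness. Both arrive at the same place; the paper then dismisses the evolution equations as ``a standard computation,'' just as you do.
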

\begin{remark}
The assumption that $g(t) \in \mathfrak{Met}^{T}(S)$ is not required.  It follows from the work in \cite{SmoWaZa}, that if $g(0)$ is Sasakian, and $g(t)$ solves~(\ref{T-Ricci}), then $g(t)$ is Sasakian, and hence $g(t) \in \mathfrak{Met}^{T}(S)$.  That equation~(\ref{ODE}) admits a solution of $[0,T]$ is standard; see, for example, Lemma 3.15 in \cite{BenChow1}.
\end{remark}

The proof of Proposition~\ref{coupled eqns prop} will follow essentially from the following lemma.

\begin{Lemma}\label{time dep. diffeo Lemma}
Under the assumptions of Proposition~\ref{coupled eqns prop}, for each $t\in [0,T]$, we have 
\end{Lemma}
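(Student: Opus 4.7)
The strategy is to first show that the diffeomorphisms $\rho(t)$ preserve the Reeb foliation, i.e.\ $\rho(t)_*\xi = \xi$, from which the basicness of $\bar f(t)$ and the membership $\bar g(t) \in \mathfrak{Met}^T(S)$ will follow by naturality of pullbacks and Lie derivatives.

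The key preliminary step is to verify that the time-dependent vector field $V(t) := -\tfrac{1}{2}\nabla_{g(t)}f(t)$ generating the flow $\rho(t)$ commutes with $\xi$ at every time. Since $g(t) \in \mathfrak{Met}^T(S)$, the Reeb field is Killing with respect to $g(t)$; combined with the fact that $f(t)$ is basic, i.e.\ $\mathcal{L}_\xi f(t) = 0$, this yields
\[
[\xi, V(t)] \;=\; -\tfrac{1}{2}\,\mathcal{L}_\xi\bigl(\nabla_{g(t)}f(t)\bigr) \;=\; -\tfrac{1}{2}\,\nabla_{g(t)}\bigl(\mathcal{L}_\xi f(t)\bigr) \;=\; 0,
\]
where the middle equality uses that $\mathcal{L}_\xi$ commutes with the musical isomorphisms of a metric it preserves.

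With this commutation in hand, the standard formula for the Lie derivative of a vector field along the flow of a time-dependent vector field gives $\frac{d}{dt}\bigl(\rho(t)^*\xi\bigr) = \rho(t)^*[V(t),\xi] = 0$, and since $\rho(0) = \mathrm{id}$ we conclude $\rho(t)_*\xi = \xi$ for all $t \in [0,T]$. The remaining assertions follow by functoriality: $\bar g(t)(\xi,\xi) = g(t)(\rho(t)_*\xi,\rho(t)_*\xi) = g(t)(\xi,\xi) = 1$, while $\mathcal{L}_\xi \bar g(t) = \rho(t)^*\mathcal{L}_{\rho(t)_*\xi}g(t) = \rho(t)^*\mathcal{L}_\xi g(t) = 0$, confirming $\bar g(t) \in \mathfrak{Met}^T(S)$. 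Analogously $\mathcal{L}_\xi \bar f(t) = \rho(t)^*\mathcal{L}_{\rho(t)_*\xi}f(t) = \rho(t)^*\mathcal{L}_\xi f(t) = 0$, so $\bar f(t)$ is basic.

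The only mildly delicate point is the identity $\mathcal{L}_\xi \nabla_{g(t)}f(t) = \nabla_{g(t)} \mathcal{L}_\xi f(t)$ at each time $t$: although the metric is evolving, this identity holds because at each fixed $t$, $\xi$ is Killing for $g(t)$, so $\mathcal{L}_\xi$ commutes with raising indices via $g(t)$ at that instant. I do not anticipate any serious obstacle beyond this routine verification, since the rest of the argument is a straightforward manipulation of pullbacks and commuting vector fields.
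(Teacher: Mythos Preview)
Your argument for part \textit{(i)} is correct and is genuinely different from the paper's. The paper works in preferred local coordinates $(x,z_1,\dots,z_n)$ and argues that since $g(t)$ and $f(t)$ are independent of $x$, the generating vector field $-\tfrac{1}{2}\nabla_{g(t)}f(t)$ is independent of $x$, so the flow acts by translation in the leaf direction and $\rho(t)_*\xi=\xi$ follows. Your approach instead uses the coordinate-free observation that $[\xi,V(t)]=0$ (because $\xi$ is Killing for $g(t)$ and $f(t)$ is basic) together with the ODE $\tfrac{d}{dt}\rho(t)^*\xi=\rho(t)^*[V(t),\xi]=0$. This is cleaner and makes the role of the Killing property transparent; the paper's computation buys explicit local expressions for $\rho^*\eta$ and $\rho^*g$, which it then uses to verify bundle-likeness by hand.

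For part \textit{(ii)} you take a shortcut: rather than checking Definition~\ref{bundle-like} directly as the paper does, you show $\mathcal{L}_\xi\bar g(t)=\rho(t)^*\mathcal{L}_\xi g(t)=0$ and $\bar g(t)(\xi,\xi)=1$, hence $\bar g(t)\in\mathfrak{Met}^T(S)$, which by the remark following Definition~\ref{met(S) def} implies bundle-likeness. That is legitimate. However, you have not addressed the second clause of \textit{(ii)}, namely $\rho(t)^*(g^T)=(\rho(t)^*g)^T$. This is not automatic from $\bar g\in\mathfrak{Met}^T(S)$: it requires observing that since $\rho(t)_*\xi=\xi$, the diffeomorphism intertwines the two orthogonal splittings $TS=L_\xi\oplus L_\xi^{\perp_g}$ and $TS=L_\xi\oplus L_\xi^{\perp_{\rho^*g}}$, so pulling back $g^T$ (viewed as $g$ restricted to $L_\xi^\perp$) agrees with taking the transverse part of $\rho^*g$. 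The paper dismisses this as ``an easy application of the definition of $g^T$, and property \textit{(i)}'', and it is indeed short, but you should include a sentence to that effect. Also note that you have drifted into proving conclusions of Proposition~\ref{coupled eqns prop} ($\bar f$ basic, $\bar g\in\mathfrak{Met}^T(S)$) rather than the lemma as stated; the content is there, but be sure your write-up matches the two items \textit{(i)} and \textit{(ii)} of the lemma.
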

\begin{enumerate}
\item[{(\it i)}]
$\rho(t)_{*}\xi=\xi$. 
\item[{(\it ii)}]
$\rho(t)^{*}g$ is bundle-like for the foliation induced by the vector field $\xi$.  Moreover, $\rho(t)^{*}(g^{T}) = (\rho(t)^{*}g)^{T}$.
\end{enumerate}

\begin{proof}
For the rest of the proof, fix a point $p\in S$ and choose preferred local coordinates $\{x,z_{1},\dots,z_{n}, \bar{z}_{1},\dots, \bar{z}_{n}\}$ in an open neighbourhood $U \ni p$, for the metric $g(0)$.

We begin by proving {\it(i)} for small time; fix a time $t$ and asume that $t$ is chosen sufficiently small so that $\rho(s)(x,0,\dots,0)$ remains in the coordinate patch for every $x \in [-\epsilon, \epsilon]$ for some $\epsilon >0$, and $0\leq s \leq 2t$. Observe that $f(t)$ and $g(t)$ are independent of $x$, and so the time dependent vector field $ -\frac{1}{2}\nabla_{g(t)}f(t)$ is independent of $x$.  The curves $\rho(s)(x,0,\dots,0)$ have the same tangent vectors for every $x \in [-\epsilon, \epsilon]$ and $s \in [0,t]$. Hence $\rho(t)(x,0,\dots,0) = (x,0,\dots,0)+\rho(t)(0,0,\dots,0)$.  It is clear then that $\rho(t)_{*}\xi = \xi$, proving the result for small $t$.  The result for general $t$ is obtained by taking a sequence of preferred local coordinates in open sets $U_{i}$ which cover the curve $\rho(s)$ where $s\in [0,t]$, and applying the above argument repeatedly.

If $h$ is a smooth, basic, local function in a neighbourhood of $\rho(t)(p)$, then $\rho(t)^{*}h$ is a smooth, basic function in a neighbourhood of $p$, since $\xi \rho^{*} h = \rho_{*}\xi h = \xi h = 0$.  Using the local formula for the evolved contact form, and the above preferred coordinates, this yields
\begin{equation*}
 \rho(t)^{*}\eta(t) = dx + \sqrt{-1}\sum_{j=1}^{n} f_{j}dz^{j} -\sqrt{-1}\sum_{j=1}^{n} f_{\bar j} d\bar{z}^{j},
 \end{equation*}
where $f_{j}, f_{\bar j}$ are smooth, basic, local functions.  A similar argument provides a local formula for $\rho(t)^{*}g(t)$.  They key point is that the component functions of $ \rho(t)^{*}\eta(t)$, and $\rho(t)^{*}g(t)$ are basic in the preferred coordinates.  From now on, we suppress the argument $t$.  We have the following formula for the metric
\begin{equation}\label{failure to stay Sasaki}
\rho^{*}g = \rho^{*}(\eta\otimes\eta + d\eta(\cdot, \Phi \cdot)) = \rho^{*}\eta \otimes \rho^{*}\eta + d\eta(\rho_{*}\cdot, \Phi \rho_{*}\cdot).
\end{equation}
In particular, $\rho^{*}g(\xi, \cdot) = \rho^{*}\eta$.  We can now prove {\it(ii)}.  Suppose that $X$ is a foliate vector field orthogonal $\xi$, then
\begin{equation*}
X= A(x,z_{1},\dots,z_{n},\bar{z}_{1},\dots, \bar{z}_{n})\frac{\partial}{\partial x} +Z(z_{1},\dots,z_{n},\bar{z}_{1},\dots, \bar{z}_{n}),
\end{equation*}
where
\begin{equation*}
Z = \sum_{i=1}^{n}B_{i}(z_{1},...,z_{n},\bar{z}_{1},..., \bar{z}_{n})\frac{\partial}{\partial z_{i}} +\sum_{i=1}^{n}C_{i}(z_{1},...,z_{n},\bar{z}_{1},..., \bar{z}_{n})\frac{\partial}{\partial \bar{z}_{i}}.
\end{equation*}
Since $X$ is orthogonal $\xi$, and $\rho_{*}\xi = \xi$ we have $0=\rho^{*}g(X,\xi) = \rho^{*}\eta(X) = A +\rho^{*}\eta(Z)$, and so
\begin{equation}\label{bundle like relation}
A(x,z_{1},\dots,z_{n},\bar{z}_{1},\dots, \bar{z}_{n}) = -\rho^{*}\eta(Z).
\end{equation}
By the above arguments, the right hand side of ~(\ref{bundle like relation}) is a basic function, and thus $A$ is independent of $x$.  It follows that $\rho^{*}g(X,Y)$ is a basic function for any vector fields $X, Y$ verifying the assumptions of Definition~\ref{bundle-like}.  The last statement is an easy application of the definition of $g^{T}$, and property {\it (i)}.
\end{proof}

Note that in equation~(\ref{failure to stay Sasaki}), it is precisely the fact that $\Phi \rho_{*} \ne \rho_{*} \Phi$ which prevents the pulled back metric $\rho^{*}g$ from being Sasakian, and necessitates our consideration of the large class of manifolds foliated by Riemannian flows.  We can now prove Proposition~\ref{coupled eqns prop}.

\begin{proof}[Proof of Proposition~\ref{coupled eqns prop}.]
That $\bar{g}$ is bundle-like follows from Lemma~\ref{time dep. diffeo Lemma}.  That $\bar{f}$ is basic is clear.  That equations~(\ref{pulled back metric}) and~(\ref{pulled back dilaton}) are satisfied is a standard computation.  That $\bar{g} \in \mathfrak{Met}^{T}(S)$ follows from Lemma~\ref{time dep. diffeo Lemma}, since $\mathcal{L}_{\xi}\rho^{*}g = \mathcal{L}_{\xi}g$ if $\rho_{*}\xi = \xi$.
\end{proof}

In order to prove Theorem~\ref{trans F thm}, we are thus reduced to showing that we can solve the coupled equations~(\ref{T-Ricci}) and~(\ref{TBHE}).

\begin{prop}\label{solve TBHE}
If $g^{T}(t)$ is a transverse Ricci flow on $[0,T]$ with $g(0)$ Sasakian, and $f_{T}$ is a basic function, then $\exists f(t)$ for $t \in [0,T]$ solving the transverse backward heat equation~(\ref{TBHE}) with $f(t)$ basic for each time $t$, and $f(T)=f_{T}$.
\end{prop}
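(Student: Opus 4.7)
The plan is to reduce the transverse backward heat equation to a linear parabolic equation that can be solved by standard theory, then verify that the solution produces a smooth, basic function.

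First I would reverse time by setting $s = T-t$, turning the TBHE~(\ref{TBHE}) into the forward equation
\begin{equation*}
\frac{\partial f}{\partial s} = \Delta_{B} f - |\nabla f|^{2} + R^{T}, \qquad f(s=0) = f_{T}.
\end{equation*}
Next I would linearize via the standard substitution $u = e^{-f}$. A direct computation (using $\Delta_B u = e^{-f}(|\nabla f|^{2} - \Delta_B f)$) shows $u$ satisfies
\begin{equation*}
\frac{\partial u}{\partial s} = \Delta_{B} u - R^{T}\, u, \qquad u(s=0) = e^{-f_{T}}.
\end{equation*}
Because the basic Laplacian agrees with the full Laplacian on basic functions (as noted in \S 2), this can be viewed as a linear strictly parabolic equation on all of $S$, and the evolving metric $g^{T}(t)$ is smooth on the compact interval $[0,T]$ with uniformly bounded geometry, so standard linear parabolic theory (on the compact manifold $S$, with smooth time-dependent coefficients) yields a unique smooth solution $u$ on $[0,T]$.

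The two remaining points are that $u(s)$ stays basic and strictly positive. For basicness, note that $\xi$ is Killing for every $g(t)$ along the transverse Ricci flow (this follows from $g(t) \in \mathfrak{Met}^{T}(S)$, as recorded in the remark preceding Proposition~\ref{coupled eqns prop}), so $\mathcal{L}_{\xi}$ commutes with both $\Delta_{B}$ and multiplication by the basic function $R^{T}$; applying $\mathcal{L}_{\xi}$ to the PDE gives $\partial_{s}(\mathcal{L}_{\xi}u) = \Delta_{B}(\mathcal{L}_{\xi}u) - R^{T}(\mathcal{L}_{\xi}u)$, and since $\mathcal{L}_{\xi}u(0) = 0$, uniqueness forces $\mathcal{L}_{\xi}u \equiv 0$. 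Positivity follows from the parabolic maximum principle: with $C := \sup_{[0,T]\times S} |R^{T}|$, the function $e^{Cs}u$ is a supersolution of the heat equation bounded below by $\min u(0) = e^{-\max f_{T}} > 0$, so $u(s) \geq e^{-Cs}e^{-\max f_{T}} > 0$ everywhere on $[0,T]\times S$.

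Having established $u > 0$, $u$ smooth, and $u$ basic, define $f(s) = -\log u(s)$. Then $f$ is smooth and basic on $[0,T]$, and reversing the substitution gives back the forward equation for $f$; changing back from $s$ to $t$ recovers the TBHE with terminal condition $f(T) = f_{T}$. The only delicate step is verifying that the solution of the linear heat-type equation genuinely stays basic in the irregular case, but this is handled uniformly by the commutation argument above, which does not require the leaf space to be Hausdorff.
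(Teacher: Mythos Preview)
Your argument is correct and takes a genuinely different route from the paper's. Both proofs begin with the same time reversal $s=T-t$ and the same substitution $u=e^{-f}$ to obtain the linear equation $\partial_{s}u = \Delta_{B}u - R^{T}u$. From there the paper works locally: it covers $S$ by preferred coordinate charts $U_{\alpha}=I\times V_{\alpha}$, observes that the coefficients are independent of the leaf variable $x$, solves a genuinely parabolic equation on each transverse slice $V_{\alpha}\subset\mathbb{C}^{n}$, and then invokes a local uniqueness lemma (via the maximum principle) on overlaps to glue these to a global basic solution. You instead exploit the identity $\Delta_{B}=\Delta$ on basic functions to replace $\Delta_{B}$ by the full Laplacian, solve the resulting strictly parabolic linear equation globally on the compact manifold $S$ in one stroke, and only afterwards recover basicness by the commutation argument $[\mathcal{L}_{\xi},\Delta]=0$ (valid since $\xi$ is Killing for each $g(t)$) together with uniqueness for the linear problem. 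Your approach is cleaner in that it avoids the chart-by-chart construction and the gluing lemma entirely; the paper's approach, on the other hand, makes the transverse nature of the equation more transparent and never needs to invoke the full Laplacian on $S$. One small point: in your basicness step you write that $\mathcal{L}_{\xi}u$ satisfies $\partial_{s}(\mathcal{L}_{\xi}u)=\Delta_{B}(\mathcal{L}_{\xi}u)-R^{T}\mathcal{L}_{\xi}u$, but at that stage you are working with the full $\Delta$, not $\Delta_{B}$; the argument still goes through since $\xi$ Killing gives $[\mathcal{L}_{\xi},\Delta]=0$, so this is only a notational slip.
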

\begin{proof}
Let $\{U_{\alpha}\}$ be a cover of $S$ by preferred local coordinate charts, constructed with respect to the metric $g(0)$, and let $\{V_{\alpha}\}$ be also as before.  Set $u(s) = e^{-f(s)}$, where $s=T-t$.  Then the equation~(\ref{TBHE}) becomes
\begin{equation*}
\frac{\partial u}{\partial s} = \Delta_{B} u -R^{T} u, \text{  } \text{  } u(0) = \hat{u}.
\end{equation*}
Fix an open set $U_{\beta}$.  Note that by the existence and uniqueness results of \cite{SmoWaZa}, $(g^{T})^{-1}$ and $R^{T}$ are independent of $x$.  By assumption, the initial condition $ \hat{u}$ is independent of $x$ also.  Thus, if we find a solution $\tilde{u}$ to
\begin{equation}\label{Restricted Heat}
\frac{\partial \tilde{u}}{\partial s} = \Delta \tilde{u} -R^{T} \tilde{u}, \text{  } \text{  } \tilde{u}(0) = \hat{u},
\end{equation}
on $V_{\beta} \subset \mathbb{C}^{n}$ and then set $u(x,z_{1},\dots z_{n}) = \tilde{u}(z_{1},\dots,z_{n})$, then this will solve the problem on $U_{\beta}$.  Now, equation~(\ref{Restricted Heat}) is parabolic as long as $g^{T}$ remains positive definite, and hence we can solve~(\ref{Restricted Heat}) on $[0,T]$.  We have thus generated a family of local basic solutions $\{ U_{\alpha}, u_{\alpha} \}$.  It remains to show that these solutions glue to a global solution.  This is a consequence of the following lemma.
 
\begin{Lemma}
Let $w(t)$ be a basic function solving~(\ref{Restricted Heat}) on $U_{\alpha} \cap U_{\beta}$ with $w(0)=0$.  Then $w(t) = 0$ $ \forall t\in [0,T]$.
\end{Lemma}
\begin{proof}
Since $R^{T}$ is a smooth function on $S \times [0,T]$, we may choose $\lambda >0$ so that $R^{T} + \lambda >0$ on $S \times [0,T]$.  Set $v = e^{-\lambda s}w$, and observe that $v$ satisfies
\begin{equation*}
\frac{\partial v}{\partial s} = \Delta v -(R^{T} + \lambda)v
\end{equation*}
where we are considering the equation on $V \subset \mathbb{C}^{n}$.  Applying the maximum principle to $v$ and $-v$, and using that $v$ is basic proves the lemma.
\end{proof}

Now, the lemma shows that the solutions glue to a global solution, and it is clear, by construction, that the solution is basic.
\end{proof}

We now investigate the dependence of the functional $\mathcal{F}^{T}$ under diffeomorphisms of the type constructed in Proposition~\ref{coupled eqns prop}, whose existence we have just demonstrated in Proposition~\ref{solve TBHE}.  We begin by abstracting the properties of these diffeomorphisms.  

\begin{definition}
Let $\mathfrak{Diff}$ be the group of diffeomorphisms of $S$.  We define the class of transverse diffeomorphisms $\mathfrak{Diff}^{T}$
\begin{equation*}
\mathfrak{Diff}^{T} = \left\{ \rho \in \mathfrak{Diff} : \rho_{*}\xi=\xi \text{ and } \rho^{*}(g^{T}) = (\rho^{*}g)^{T}\right\}
\end{equation*}
\end{definition}

First note that $\mathfrak{Diff}^{T}$ is a subgroup of $\mathfrak{Diff}$ and that $\mathfrak{Diff}^{T}$ preserves the class $\mathfrak{Met}^{T}$.  We now show that the group $\mathfrak{Diff}^{T}$ preserves the transverse scalar curvature.

\begin{Lemma}\label{scalar curv invar}
Let $g \in \mathfrak{Met}^{T}(S)$.  If $\rho \in \mathfrak{Diff}^{T}$ then $\rho^{*}R^{T}(g) = R^{T}(\rho^{*}g)$
\end{Lemma}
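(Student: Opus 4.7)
The plan is to exhibit the transverse scalar curvature as a construction that is intrinsic to the pair $(Q, g^{T})$, and hence covariant under bundle maps that respect this structure. Since elements of $\mathfrak{Diff}^{T}$ are exactly the diffeomorphisms of $S$ that induce such maps on $Q$, the equivariance is automatic.

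First, I would use $\rho_{*}\xi = \xi$ to conclude that $\rho$ preserves the line bundle $L_{\xi}$ and therefore descends to a bundle isomorphism covering $\rho$, which I continue to denote by $\rho_{*}:\rho^{*}Q \to Q$. The hypothesis $\rho^{*}(g^{T}) = (\rho^{*}g)^{T}$ then says precisely that this induced bundle isomorphism is an isometry between the two transverse metrics.

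Second, I would verify that the transverse Levi-Civita connection transforms equivariantly, i.e.\ $\nabla^{T,\rho^{*}g} = \rho^{*}\nabla^{T,g}$. The transverse Levi-Civita connection introduced in \S 2.4 is the unique torsion-free connection on $Q$ that is compatible with the transverse metric, and this characterization depends only on $(Q, g^{T})$. The pullback $\rho^{*}\nabla^{T,g}$ is torsion-free and compatible with $\rho^{*}(g^{T}) = (\rho^{*}g)^{T}$, so by uniqueness it agrees with $\nabla^{T,\rho^{*}g}$. Since the transverse Riemann, Ricci and scalar curvatures are built from $\nabla^{T}$ and $g^{T}$ by universal formulas, this gives $Rm^{T}(\rho^{*}g) = \rho^{*}Rm^{T}(g)$ and then, contracting twice with the inverse transverse metric, $R^{T}(\rho^{*}g) = \rho^{*}R^{T}(g)$.

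The one point requiring some care is the uniqueness of the transverse Levi-Civita connection, which is the transverse analogue of the fundamental theorem of Riemannian geometry. As a fallback, one can instead argue entirely in the preferred local coordinates of \S 3: there the transverse metric and its scalar curvature $R^{T}$ are $x$-independent objects computed by the usual K\"ahler formulas, and the condition $\rho_{*}\xi = \xi$ means that in sufficiently small foliation charts $\rho$ descends to a local diffeomorphism of the transverse slice $V_{\alpha}$, along which scalar curvature is manifestly diffeomorphism-invariant. Either route yields the claim.
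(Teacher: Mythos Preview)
Your argument is correct, and it proceeds along a genuinely different line than the paper's own proof. The paper does not appeal to the naturality of the transverse Levi--Civita connection; instead it invokes the pointwise identity (from \cite{BoyGal1})
\[
R^{T} = R + 2\sum_{i} g^{T}(\nabla_{E_{i}}\xi,\nabla_{E_{i}}\xi),
\]
valid for any $g\in\mathfrak{Met}^{T}(S)$, and then checks directly that each term on the right transforms correctly under $\rho\in\mathfrak{Diff}^{T}$: the full scalar curvature $R$ is automatically diffeomorphism-invariant, and the correction term is handled by the relation $\tilde{\nabla}_{X}Y=\rho_{*}^{-1}(\nabla_{\rho_{*}X}\rho_{*}Y)$ together with $\rho_{*}\xi=\xi$ and $\rho^{*}(g^{T})=(\rho^{*}g)^{T}$.

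Your route, by contrast, treats $R^{T}$ as an invariant of $(Q,g^{T})$ and uses the uniqueness of the transverse Levi--Civita connection (which the paper states in \S2.4) to conclude that $\rho^{*}\nabla^{T,g}=\nabla^{T,\rho^{*}g}$. This is cleaner and yields more: the same reasoning shows that $Rm^{T}$ and $Ric^{T}$ are $\mathfrak{Diff}^{T}$-equivariant, not just $R^{T}$. The paper's approach, on the other hand, is more self-contained in that it reduces everything to the ordinary Levi--Civita connection and the full scalar curvature, avoiding any discussion of pullback connections on $Q$. Your local fallback via foliated charts is also valid; just note that since $\rho^{*}g$ need not be Sasakian (as the paper emphasizes after equation~(\ref{failure to stay Sasaki})), the relevant invariance on the transverse slice is ordinary Riemannian diffeomorphism-invariance of scalar curvature, not anything specifically K\"ahler.
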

\begin{proof}
Fix a point $s \in S$.  Let $\{ E_{i} \}$ be a local orthonormal frame for $L_{\xi}^{\perp}$.  By \cite{BoyGal1}, we have the following formula for any metric $g \in \mathfrak{Met}^{T}(S)$,
\begin{equation*}
R^{T} = R + 2\sum_{i} g^{T}(\nabla_{E_{i}} \xi, \nabla_{E_{i}}).
\end{equation*}
Where $R$ is the scalar curvature of the metric $g$.  Observe that as $\mathfrak{Diff}^{T}$ preserves $\mathfrak{Met}^{T}(S)$, we have
\begin{equation}\label{diff in scalar curv}
\rho^{*}R^{T}(g) - R^{T}(\rho^{*}g) =2\sum_{i} \rho^{*}(g^{T}(\nabla_{E_{i}} \xi, \nabla_{E_{i}}\xi)) - (\rho^{*}g)^{T}(\tilde{\nabla}_{\rho_{*}^{-1}E_{i}} \xi, \tilde{\nabla}_{\rho_{*}^{-1}E_{i}}\xi).
\end{equation}
where we have used $\tilde{\nabla}$ to denote the Levi-Civita connection of $\rho^{*}g$, and that $\{ \rho_{*}^{-1}E_{i} \}$ is an orthonormal frame with respect to $\rho^{*}g$.  It is important to note that the first term on the right hand side of~(\ref{diff in scalar curv}) is the pullback of a \emph{function}.  In particular, we have
\begin{equation*}
\rho^{*}(g^{T}(\nabla_{E_{i}} \xi, \nabla_{E_{i}}\xi)) = \rho^{*}g^{T}(\rho_{*}^{-1}\nabla_{E_{i}} \xi, \rho_{*}^{-1}\nabla_{E_{i}} \xi).
\end{equation*}
It is elementary to check that
\begin{equation*}
\tilde{\nabla}_{X}Y = \rho_{*}^{-1}\left( \nabla_{\rho_{*}X}\rho_{*}Y \right).
\end{equation*}
Thus, since $\rho \in \mathfrak{Diff}^{T}$ we see that  the right hand side of~(\ref{diff in scalar curv}) is zero.
\end{proof}

\begin{prop}\label{diff invar prop}
The functional $\mathcal{F}^{T}$ is invariant under the action of $\mathfrak{Diff}^{T}$.
\end{prop}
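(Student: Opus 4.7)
The plan is to verify that each piece of the integrand transforms correctly under $\rho\in\mathfrak{Diff}^T$, and then conclude by a change-of-variables argument. Because $\rho\colon(S,\rho^{*}g)\to(S,g)$ is tautologically an isometry, most of the work has already been done; the only piece that is not immediate from isometry invariance is the transverse scalar curvature, which is exactly the content of Lemma~\ref{scalar curv invar}.

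First I would unwind the definition: for $g\in\mathfrak{Met}^T(S)$, $f\in C^\infty_B(S)$, and $\rho\in\mathfrak{Diff}^T$, I want to show
\begin{equation*}
\mathcal{F}^T(\rho^{*}g,\rho^{*}f)=\int_S\bigl(R^T(\rho^{*}g)+|\nabla_{\rho^{*}g}(\rho^{*}f)|^2_{\rho^{*}g}\bigr)\,e^{-\rho^{*}f}\,d\mu_{\rho^{*}g}
\end{equation*}
equals $\mathcal{F}^T(g,f)$. I would begin by checking that the pair $(\rho^{*}g,\rho^{*}f)$ actually lies in $\mathfrak{Met}^T(S)\times C^\infty_B(S)$ so that the left-hand side is well-defined: $\rho^{*}g\in\mathfrak{Met}^T(S)$ follows from the remark immediately preceding the proposition, and $\rho^{*}f$ is basic because $\xi(\rho^{*}f)=\rho^{*}((\rho_{*}\xi)f)=\rho^{*}(\xi f)=0$, using $\rho_{*}\xi=\xi$.

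Next I would handle the three terms of the integrand in turn. For the scalar curvature, Lemma~\ref{scalar curv invar} gives $R^T(\rho^{*}g)=\rho^{*}R^T(g)$. For the gradient term, since $\rho\colon(S,\rho^{*}g)\to(S,g)$ is by construction an isometry, the usual identity
\begin{equation*}
|\nabla_{\rho^{*}g}(\rho^{*}f)|^2_{\rho^{*}g}=\rho^{*}\bigl(|\nabla_g f|^2_g\bigr)
\end{equation*}
holds; this is immediate from the coordinate-free formula $|\nabla h|^2=g^{-1}(dh,dh)$ and the fact that $\rho^{*}$ commutes with $d$. Similarly $e^{-\rho^{*}f}=\rho^{*}e^{-f}$ and $d\mu_{\rho^{*}g}=\rho^{*}d\mu_g$, the latter again by isometry invariance of the Riemannian volume form (the orientation-preservation needed for the change of variables poses no problem because the diffeomorphisms we apply arise from the flow of a vector field via Proposition~\ref{coupled eqns prop}, hence are isotopic to the identity).

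Assembling these identities, the integrand of $\mathcal{F}^T(\rho^{*}g,\rho^{*}f)$ equals
\begin{equation*}
\rho^{*}\!\bigl[(R^T(g)+|\nabla_g f|^2_g)\,e^{-f}\,d\mu_g\bigr],
\end{equation*}
and the standard change-of-variables formula $\int_S\rho^{*}\omega=\int_S\omega$ for any top-degree form and orientation-preserving diffeomorphism $\rho$ completes the proof. There is no real obstacle here; the only substantive ingredient beyond the definition of $\mathfrak{Diff}^T$ is Lemma~\ref{scalar curv invar}, which precisely guarantees that $R^T$ transforms covariantly under $\mathfrak{Diff}^T$ even though elements of $\mathfrak{Diff}^T$ need not be isometries of any fixed metric on the quotient.
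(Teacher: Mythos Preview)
Your proof is correct and follows essentially the same approach as the paper: invoke Lemma~\ref{scalar curv invar} for $R^{T}$, check that $\rho^{*}f$ is basic via $\rho_{*}\xi=\xi$, and use that $\rho\colon(S,\rho^{*}g)\to(S,g)$ is an isometry for the remaining terms. The only cosmetic difference is that the paper records the gradient identity via the chain $|\nabla f|_{g^{T}}=|\nabla f|_{g}=|\nabla\rho^{*}f|_{\rho^{*}g}=|\nabla\rho^{*}f|_{(\rho^{*}g)^{T}}=|\nabla\rho^{*}f|_{\rho^{*}(g^{T})}$, thereby also exhibiting the second defining property of $\mathfrak{Diff}^{T}$; your orientation caveat is unnecessary since integration against the Riemannian measure $d\mu$ is insensitive to orientation.
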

\begin{proof}
By Lemma~\ref{scalar curv invar}, it suffices to show that if $f \in C^{\infty}_{B}(S)$ then so is $\rho^{*}f$.  This last fact is obvious as $\rho_{*}\xi = \xi$.  Thus we have
\begin{equation*}
|\nabla f|_{g^{T}} = |\nabla f |_{g} = |\nabla \rho^{*}f|_{\rho^{*}g} = |\nabla \rho^{*}f|_{(\rho^{*}g)^{T}} = |\nabla \rho^{*}f|_{\rho^{*}(g^{T})}
\end{equation*}
\end{proof}
We can now prove Theorem~\ref{trans F thm}.

\begin{proof}[Proof of Theorem~\ref{trans F thm}]
By Proposition~\ref{solve TBHE} we can solve equation~(\ref{TBHE}).  By Proposition~\ref{coupled eqns prop} the pulled back pair $(\bar{g}(t), \bar{f}(t))$ satisfy the gradient flow equations~(\ref{pulled back metric}) and~(\ref{pulled back dilaton}), and hence $\mathcal{F}^{T}(\bar{g}(t), \bar{f}(t))$ is increasing.   By Lemma~\ref{time dep. diffeo Lemma}, the time dependent diffeomorphisms $\rho(t) \in \mathfrak{Diff}^{T}$ and so by Proposition~\ref{diff invar prop}, $\mathcal{F}^{T}(g,f) = \mathcal{F}^{T}(\bar{g}, \bar{f})$.  The theorem is proved.
\end{proof}


\section{The Transverse Entropy Functional}
The aim of this section is to prove Theorem~\ref{trans W thm}.  As with the Ricci flow, the proof of the theorem is essentially the same as the proof of Theorem~\ref{trans F thm}, and hence we will omit the details.  We refer the reader to \cite{BenChow, Pman} for the details in the Ricci flow case.
\begin{definition}\label{W def}
Let $(S,\xi)$ be a foliated Riemannian manifold.  Define the transverse entropy functional $\mathcal{W}^{T}: \mathfrak{Met}^{T}(S) \times C^{\infty}_{B}(S) \times \mathbb{R}_{>0} \rightarrow \mathbb{R}$ by
\begin{equation*}
\mathcal{W}^{T}(g,f,\tau) =(4\pi \tau)^{-n}\int_{S}\left( \tau(\text{R}^{T} + |\nabla f|^{2}) + (f-2n) \right) e^{-f}d\mu
\end{equation*}
where $d\mu$ is the volume form associated to the metric $g$.
\end{definition}
\begin{remark}
Before proceeding, we make a few observations about the scale invariance of $\mathcal{W}^{T}$.  By the remarks following Definition~\ref{def F functional}, we see that $\mathcal{W}^{T}$ depends only on the induced transverse metric.  Moreover, it is easy to see that if $g, \tilde{g} \in \mathfrak{Met}^{T}(S)$ induce the transverse metrics $g^{T}, cg^{T}$ respectively, then $\mathcal{W}^{T}(g,f,\tau) = \mathcal{W}^{T}(\tilde{g},f,c\tau)$.  By abuse of notation, we will sometimes consider the functional $\mathcal{W}^{T}(g^{T},f,\tau)$, by which we mean $\mathcal{W}^{T}(g,f,\tau)$ for any metric $g \in \mathfrak{Met}^{T}(S)$ which induces $g^{T}$.  Such a metric $g$ exists by the discussion following Definition~\ref{met(S) def}.
\end{remark}
One can compute the variational equations for $\mathcal{W}^{T}$ easily by using the computation for the $\mathcal{F}^{T}$ functional and following the same steps as in \cite{BenChow}.  In particular, one sees that if $(g(t), f(t), \tau(t))$ evolve by
\begin{equation}\label{pulled back entropy metric}
\frac{\partial {g}^{T}}{\partial t} = -(Ric^{T} + D^{2}{f})
\end{equation}
\begin{equation}\label{pulled back entropy dilaton}
\frac{\partial f}{\partial t} = -\Delta_{B}{f} - R^{T} +\frac{n}{\tau}
\end{equation}
\begin{equation}\label{tau evolution}
\frac{d \tau}{dt}=-1
\end{equation}
then $\mathcal{W}^{T}(g(t),f(t),\tau(t))$ is increasing, and $\int_{S}(4\pi \tau)^{-n}e^{-f}d\mu=1$.  By an elementary modification of the proof of Proposition~\ref{coupled eqns prop} we have

\begin{prop}\label{Entropy coupled eqns prop}
Suppose $(g(t),f(t), \tau(t)) \in \mathfrak{Met}^{T}(S) \times C^{\infty}_{B}\times\mathbb{R}_{>0} $ solves
\begin{equation*}
\frac{\partial g^{T}}{\partial t} = -Ric^{T}
\end{equation*}
\begin{equation}\label{Entropy TBHE}
\frac{\partial f}{\partial t} = -\Delta_{B}f + |\nabla f |^{2} - R^{T}+\frac{n}{\tau}
\end{equation}
\begin{equation*}
\frac{d \tau}{dt}=-1
\end{equation*}
with $(g(t),f(t))$ defined on $[0,T]$, $f(t)$ basic, $g(0)$ Sasakian, and $\tau(t) >0$.  Define a 1-parameter family of diffeomorphisms $\rho(t):S\rightarrow S$ by
\begin{equation*}
\frac{\partial \rho}{\partial t} = -\frac{1}{2}\nabla_{g(t)}f(t), \text{  } \text{  } \rho(0) = id_{S}
\end{equation*}
which is a system of ODE admitting a solution on $[0,T]$.  Then the pulled back metrics $\bar{g}(t) = \rho(t)^{*}g(t)$ are bundle-like, and hence induce transverse metrics $\bar{g}^{T}(t)$.  The transverse metrics $\bar{g}^{T}(t)$ and the pulled back dilaton $\bar{f}(t) = \rho(t)^{*}f(t)$ satisfy equations~(\ref{pulled back entropy metric}), and~(\ref{pulled back entropy dilaton}) respectively.  Moreover, $\bar{f}$ is basic, and $\bar{g}(t) \in \mathfrak{Met}^{T}(S)$.
\end{prop}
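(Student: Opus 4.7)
The proof mirrors that of Proposition~\ref{coupled eqns prop}, with the only new ingredient being the harmless spatially-constant term $n/\tau(t)$ appearing in~(\ref{Entropy TBHE}). My plan is to follow the three-step structure of that earlier proof and indicate where modifications are needed.

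First, I would establish existence of a basic solution $f(t)$ to~(\ref{Entropy TBHE}) on $[0,T]$ with $f(T) = f_T$ by reducing to Proposition~\ref{solve TBHE}: since $n/\tau(t)$ is a function of time alone, the substitution
\begin{equation*}
\tilde{f}(t) = f(t) - \int_t^T \frac{n}{\tau(s)}\,ds
\end{equation*}
turns~(\ref{Entropy TBHE}) into the transverse backward heat equation~(\ref{TBHE}), to which Proposition~\ref{solve TBHE} applies, and the basic property persists because we shift by a function of $t$ alone. The ODE for $\rho(t)$ admits a solution on $[0,T]$ by the standard argument cited in the remark after Proposition~\ref{coupled eqns prop}.

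Second, the critical geometric step is that $\rho(t) \in \mathfrak{Diff}^T$, so that $\bar{g}(t) = \rho(t)^*g(t) \in \mathfrak{Met}^T(S)$ with $\rho^*(g^T) = (\rho^*g)^T$, and $\bar{f}(t) = \rho(t)^*f(t)$ is basic. This is the content of Lemma~\ref{time dep. diffeo Lemma}, which applies verbatim: its proof uses only that $f(t)$ is basic---so the generating vector field $-\tfrac{1}{2}\nabla_{g(t)}f(t)$ has coefficients independent of $x$ in preferred local coordinates---and that $g(0)$ is Sasakian, neither of which is affected by the appearance of $n/\tau$ in~(\ref{Entropy TBHE}). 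From $\rho_*\xi = \xi$ we immediately get $\bar{g}(\xi,\xi) = 1$ and $\mathcal{L}_\xi \bar{g} = \rho^* \mathcal{L}_\xi g = 0$, hence $\bar{g}(t) \in \mathfrak{Met}^T(S)$.

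Third, the pulled back evolution equations~(\ref{pulled back entropy metric}) and~(\ref{pulled back entropy dilaton}) follow from the same pullback computation used in Proposition~\ref{coupled eqns prop}: the metric equation is identical in form to~(\ref{pulled back metric}), while in the dilaton equation the spatially constant term $n/\tau(t)$ passes unchanged through the pullback and reappears in~(\ref{pulled back entropy dilaton}) exactly as it did in~(\ref{Entropy TBHE}). I do not anticipate any genuine obstacle here: the substantive geometric content---that $\rho(t)$ respects the foliated Riemannian structure---is already handled by Lemma~\ref{time dep. diffeo Lemma}, so the remainder is routine bookkeeping.
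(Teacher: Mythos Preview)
Your proposal is correct and matches the paper's approach, which is simply to note that Proposition~\ref{Entropy coupled eqns prop} follows by an elementary modification of the proof of Proposition~\ref{coupled eqns prop}; your Steps~2 and~3 spell out precisely those modifications. One small point: your Step~1 is unnecessary here, since the existence of a basic solution $f(t)$ is part of the \emph{hypothesis} of the proposition (it is established separately in Proposition~\ref{solve Entropy TBHE}), so you may omit it.
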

Easy modifications of the proofs of Propositions~\ref{coupled eqns prop} and~\ref{diff invar prop} yield
\begin{prop}\label{solve Entropy TBHE}
If $g^{T}(t)$ is a transverse Ricci flow on $[0,T]$ with $g(0)$ Sasakian, $\tau(t)$ a positive solution of~(\ref{tau evolution}),  and $f_{T}$ is a basic function, then $\exists f(t)$ for $t \in [0,T]$ solving the transverse backward heat equation~(\ref{Entropy TBHE}) with f(t) basic for each time $t$, and $f(T)=f_{T}$.
\end{prop}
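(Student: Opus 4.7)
The plan is to follow the template of Proposition~\ref{solve TBHE} with only a cosmetic modification to absorb the extra $n/\tau$ term. Since $\tau(t)$ depends only on $t$, the additional term acts as a spatially constant zeroth-order perturbation and creates no real new difficulty.

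First, I would linearize~(\ref{Entropy TBHE}) via the substitution $u = e^{-f}$. A direct computation using $-\Delta_B f + |\nabla f|^2 = \Delta_B u / u$ converts~(\ref{Entropy TBHE}) into
\begin{equation*}
-\frac{\partial u}{\partial t} = \Delta_B u - R^T u + \frac{n}{\tau}\, u.
\end{equation*}
Setting $s = T-t$ and rewriting, this becomes the linear backward heat equation
\begin{equation*}
\frac{\partial u}{\partial s} = \Delta_B u + \Bigl(\frac{n}{\tau(T-s)} - R^T\Bigr) u, \qquad u(0) = e^{-f_T}.
\end{equation*}
The initial condition $e^{-f_T}$ is basic, and as observed in the proof of Proposition~\ref{solve TBHE}, the coefficients $(g^T)^{-1}$ and $R^T$ are independent of $x$ in any preferred local coordinate chart; the coefficient $n/\tau$ is likewise spatially constant.

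Next, I would solve the equation locally in each preferred chart $U_\beta = I \times V_\beta$. Precisely as before, because the initial data and all coefficients are basic, the local problem reduces to the strictly parabolic linear PDE
\begin{equation*}
\frac{\partial \tilde u}{\partial s} = \Delta \tilde u + \Bigl(\frac{n}{\tau(T-s)} - R^T\Bigr) \tilde u, \qquad \tilde u(0) = \hat u,
\end{equation*}
on $V_\beta \subset \mathbb{C}^n$. Standard linear parabolic theory gives a solution on $[0,T]$, and defining $u_\beta$ on $U_\beta$ to be independent of $x$ yields a basic local solution.

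Finally, I would glue these local solutions into a global one by the uniqueness argument used in Proposition~\ref{solve TBHE}. If $w$ is a basic solution on $U_\alpha \cap U_\beta$ with $w(0) = 0$, choose $\lambda > 0$ so that $\lambda + R^T - n/\tau(T-s) > 0$ on $S \times [0,T]$ and set $v = e^{-\lambda s} w$; then $v$ satisfies a parabolic equation with a nonnegative zeroth-order coefficient, and the maximum principle applied to $\pm v$ forces $w \equiv 0$. Hence the local solutions agree on overlaps and assemble to a global basic solution $u$ on $S \times [0,T]$, and $f = -\log u$ gives the desired solution of~(\ref{Entropy TBHE}). The only step that requires any care is verifying that the time-dependent shift $n/\tau$ does not spoil the sign manipulation in the maximum principle argument, but boundedness of $n/\tau$ on $[0,T]$ (since $\tau > 0$ is continuous on the closed interval) makes this automatic.
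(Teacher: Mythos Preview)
Your proposal is correct and is exactly the ``easy modification'' the paper has in mind: the paper does not spell out a separate argument for this proposition but simply declares it to follow by trivially adapting the proof of Proposition~\ref{solve TBHE}, and you have carried out precisely that adaptation (linearize via $u=e^{-f}$, reverse time, solve the resulting linear parabolic equation locally in preferred charts, and glue by the maximum principle, absorbing the bounded zeroth-order term $n/\tau$ into the shift $\lambda$).
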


\begin{prop}\label{W diff invar prop}
The functional $\mathcal{W}^{T}$ is invariant under the action of $\mathfrak{Diff}^{T}$.
\end{prop}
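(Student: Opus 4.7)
The plan is to imitate the proof of Proposition~\ref{diff invar prop} essentially verbatim, since $\mathcal{W}^{T}$ is built from the same three building blocks as $\mathcal{F}^{T}$ plus a term depending only on $f$ and on the parameter $\tau$, all integrated against $e^{-f}d\mu$. The parameter $\tau$ plays no role in the diffeomorphism action, so only the spatial integrand must be checked.

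First, given $\rho \in \mathfrak{Diff}^{T}$ and $(g,f,\tau) \in \mathfrak{Met}^{T}(S) \times C^{\infty}_{B}(S) \times \mathbb{R}_{>0}$, I would verify the following term by term. The transverse scalar curvature transforms correctly: $\rho^{*}R^{T}(g) = R^{T}(\rho^{*}g)$ by Lemma~\ref{scalar curv invar}. The pulled-back potential $\rho^{*}f$ lies in $C^{\infty}_{B}(S)$ because $\mathcal{L}_{\xi}\rho^{*}f = \rho^{*}\mathcal{L}_{\rho_{*}\xi}f = \rho^{*}\mathcal{L}_{\xi}f = 0$, using that $\rho_{*}\xi = \xi$. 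For the gradient term, the identical chain of equalities used in the proof of Proposition~\ref{diff invar prop},
\begin{equation*}
|\nabla f|_{g^{T}} = |\nabla f|_{g} = |\nabla \rho^{*}f|_{\rho^{*}g} = |\nabla \rho^{*}f|_{(\rho^{*}g)^{T}} = |\nabla \rho^{*}f|_{\rho^{*}(g^{T})},
\end{equation*}
carries over directly, where the last equality uses the defining property $\rho^{*}(g^{T}) = (\rho^{*}g)^{T}$ of $\mathfrak{Diff}^{T}$. Finally, the new ingredient $(f-2n)e^{-f}$ only depends pointwise on $f$, so it simply pulls back as $(\rho^{*}f - 2n)e^{-\rho^{*}f}$.

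The second step is to integrate. Since $\rho$ is a diffeomorphism, the change-of-variables formula gives $\int_{S}\varphi\,d\mu_{g} = \int_{S}\rho^{*}\varphi\,d\mu_{\rho^{*}g}$ for any integrable $\varphi$. Applying this with $\varphi$ equal to the integrand defining $\mathcal{W}^{T}(g,\rho_{*}^{-1}(\rho^{*}f),\tau)$ and using the pointwise identities from the previous paragraph, I obtain
\begin{equation*}
\mathcal{W}^{T}(\rho^{*}g, \rho^{*}f, \tau) = \mathcal{W}^{T}(g, f, \tau),
\end{equation*}
which is the desired invariance. The factor $(4\pi\tau)^{-n}$ is inert.

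There is no real obstacle here: all the work was already done in the proof of Proposition~\ref{diff invar prop} and in Lemma~\ref{scalar curv invar}. The only point worth emphasizing is that although $\mathfrak{Diff}^{T}$ is defined so as to relate $\rho^{*}(g^{T})$ and $(\rho^{*}g)^{T}$ (which is what lets us compare transverse gradients computed with respect to either metric), the functional $\mathcal{W}^{T}$ itself depends only on the induced transverse metric by the remark following Definition~\ref{W def}; so the pulled-back Riemannian metric $\rho^{*}g$ and its associated transverse metric suffice to evaluate $\mathcal{W}^{T}$, and no compatibility with the transverse holomorphic structure is needed.
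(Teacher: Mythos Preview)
Your proof is correct and is precisely the ``easy modification'' of the proof of Proposition~\ref{diff invar prop} that the paper invokes in lieu of a separate argument. The only addition beyond that proof is your observation that the extra $(f-2n)e^{-f}$ term and the scalar factor $(4\pi\tau)^{-n}$ are inert under pullback, which is immediate.
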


We can now prove Theorem~\ref{trans W thm} by mimicking the proof of Theorem~\ref{trans F thm}.
\begin{proof}[Proof of Theorem~\ref{trans W thm}]
By Proposition~\ref{solve Entropy TBHE} we can solve equation~(\ref{Entropy TBHE}).  By Proposition~\ref{Entropy coupled eqns prop} the pulled back triple $(\bar{g}(t), \bar{f}(t), \tau(t))$ satisfy the gradient flow equations~(\ref{pulled back entropy metric}),~(\ref{pulled back entropy dilaton}) and~(\ref{tau evolution}) and hence $\mathcal{W}^{T}(\bar{g}(t), \bar{f}(t), \tau(t))$ is increasing.   By Lemma~\ref{time dep. diffeo Lemma}, the time dependent diffeomorphisms $\rho(t) \in \mathfrak{Diff}^{T}$ and so by Proposition~\ref{W diff invar prop}, $\mathcal{W}^{T}(g,f, \tau) = \mathcal{W}^{T}(\bar{g}, \bar{f}, \tau)$.  The theorem is proved.
\end{proof}

We now define the transverse analogue of Perelman's $\mu$-functional. 

\begin{definition}
Set 
\begin{equation*}
\chi = \left\{(g, f, \tau) \in \mathfrak{Met}^{T}(S) \times C^{\infty}_{B}(S) \times \mathbb{R}_{>0} : \int_{S}(4\pi \tau)^{-n}e^{-f}d\mu=1\right\}.
\end{equation*}
Then define the functional $\mu^{T} :\mathfrak{Met}^{T}(S) \times \mathbb{R}_{>0}$ by
\begin{equation*}
\mu^{T}(g,\tau) = \inf \left\{ \mathcal{W}^{T}(g, f, \tau) : (g, f, \tau) \in \chi\right\}.
\end{equation*}
\end{definition}
\begin{remark}
It is easy to check from the definition that $\mu^{T}$ is invariant under the action of $\mathfrak{Diff}^{T}$.  Moreover, from the definition,  $\mu^{T}$ shares the same scale invariance as $\mathcal{W}^{T}$; we refer the reader to the remark following Definition~\ref{W def}.
\end{remark}

In analogy with the Ricci flow, the $\mu^{T}$-functional is always finite.
\begin{prop}
For any pair $(g, \tau) \in \mathfrak{Met}^{T}(S) \times \mathbb{R}_{>0}$,
\begin{equation*}
\mu^{T}(g, \tau) > -\infty.
\end{equation*}
\end{prop}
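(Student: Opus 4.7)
The plan is to mimic Perelman's original proof that the classical $\mu$-functional is bounded below. Fix $(g,\tau) \in \mathfrak{Met}^T(S) \times \mathbb{R}_{>0}$ and let $(g,f,\tau) \in \chi$. Perform the substitution $u = (4\pi\tau)^{-n/2} e^{-f/2}$, under which the normalization defining $\chi$ becomes simply $\int_S u^2 \, d\mu = 1$. Because $f$ is basic and $\tau$ is a constant, $u$ is a positive basic function on $S$. Writing $f = -2\log u - n\log(4\pi\tau)$, and hence $\nabla f = -2\nabla u / u$, a direct substitution yields
\begin{equation*}
\mathcal{W}^T(g,f,\tau) = \int_S \bigl[\, 4\tau|\nabla u|^2 + \tau R^T u^2 - u^2\log u^2 \,\bigr] \, d\mu \; - \; n\log(4\pi\tau) \; - \; 2n.
\end{equation*}

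Since $S$ is compact, the transverse scalar curvature $R^T$ is a bounded smooth function, and the $L^2$ normalization on $u$ immediately gives $\int_S \tau R^T u^2 \, d\mu \geq -\tau\sup_S|R^T|$. The problem therefore reduces to a uniform lower bound for
\begin{equation*}
\mathcal{I}(u) \;:=\; 4\tau \int_S |\nabla u|^2 \, d\mu \; - \; \int_S u^2\log u^2 \, d\mu
\end{equation*}
over smooth positive $u$ with $\int_S u^2 \, d\mu = 1$. This is exactly the classical logarithmic Sobolev inequality on the compact Riemannian manifold $(S,g)$: the Sobolev inequality $\|u\|_{L^{2m/(m-2)}}^2 \leq A\|\nabla u\|_2^2 + B\|u\|_2^2$ (with $m=2n+1$) combined with Jensen's inequality (Rothaus/Gross) supplies, for every $\epsilon > 0$, a constant $C(\epsilon, S, g)$ such that $\int_S u^2\log u^2 \, d\mu \leq \epsilon \int_S |\nabla u|^2 \, d\mu + C(\epsilon, S, g)$ whenever $\int_S u^2 \, d\mu = 1$. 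Taking $\epsilon = 4\tau$ gives $\mathcal{I}(u) \geq -C(4\tau, S, g)$, and assembling the pieces yields
\begin{equation*}
\mathcal{W}^T(g,f,\tau) \; \geq \; -C(4\tau, S, g) \; - \; \tau\sup_S|R^T| \; - \; n\log(4\pi\tau) \; - \; 2n,
\end{equation*}
a finite lower bound depending only on $(g,\tau)$, as required.

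The only conceivable subtlety is that the infimum defining $\mu^T$ is taken over basic $f$ and hence over basic test functions $u$, rather than all smooth $u$. But basic functions form a subclass of $C^\infty(S)$, so restricting the admissible class of $u$ can only \emph{raise} the infimum; the Riemannian log-Sobolev inequality therefore supplies an \emph{a fortiori} bound for the restricted problem, and no foliated or transverse version of the log-Sobolev inequality is needed at this stage. Such a refinement would only be required if one sought a lower bound for $\mu^T$ depending solely on transverse data rather than on the full ambient metric $g$, which is not asserted here.
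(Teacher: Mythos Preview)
Your proof is correct and follows exactly the approach the paper indicates: the paper's own proof merely states that the argument is identical to the general Ricci flow case via the logarithmic Sobolev inequality, citing \cite{BenChow, Roth} and the appendix, which is precisely the substitution-plus-log-Sobolev argument you have written out. Your remark that restricting to basic test functions can only raise the infimum, so that the ambient Riemannian log-Sobolev inequality suffices, cleanly handles the only point specific to the transverse setting.
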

\begin{proof}
The proof is identical to the proof for the general Ricci flow.  We refer the reader to \cite{BenChow, Roth}, and the appendix.
\end{proof}

\begin{prop}
Let $(S,\xi)$ be a foliated Riemannian manifold, $g(t)$ a solution of the transverse Ricci flow on $[0,T]$ with $g(0)$ Sasakian.  Suppose that $\tau(t)$ is a solution of~(\ref{tau evolution}).  Then $\mu^{T}(g(t), \tau(t))$ is increasing.
\end{prop}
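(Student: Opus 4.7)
The plan is to mimic Perelman's classical argument for the monotonicity of the $\mu$-functional, adapting it to the transverse setting using the machinery already developed in \S 5. Fix times $0 \leq t_1 < t_2 \leq T$; I want to show $\mu^T(g(t_1), \tau(t_1)) \leq \mu^T(g(t_2), \tau(t_2))$.

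Given $\epsilon > 0$, choose a basic function $f_2 \in C^\infty_B(S)$ with $(g(t_2), f_2, \tau(t_2)) \in \chi$ satisfying $\mathcal{W}^T(g(t_2), f_2, \tau(t_2)) \leq \mu^T(g(t_2), \tau(t_2)) + \epsilon$. By Proposition~\ref{solve Entropy TBHE}, there exists a basic solution $f(t)$ of the transverse backward heat equation~(\ref{Entropy TBHE}) on $[t_1, t_2]$ with terminal condition $f(t_2) = f_2$. The essential point is that the constraint $\int_S (4\pi\tau)^{-n} e^{-f} d\mu = 1$ is preserved along the coupled system of the transverse Ricci flow, the backward heat equation~(\ref{Entropy TBHE}), and $\dot\tau = -1$. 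This is a direct computation: using $\frac{d}{dt}(4\pi\tau)^{-n} = \frac{n}{\tau}(4\pi\tau)^{-n}$, $\frac{\partial}{\partial t} d\mu = -R^T d\mu$, and substituting~(\ref{Entropy TBHE}), the time derivative of the constraint integral reduces to
$$\frac{d}{dt}\int_S (4\pi\tau)^{-n} e^{-f} d\mu = -\int_S (4\pi\tau)^{-n} \Delta_B(e^{-f}) d\mu,$$
which vanishes because $f(t)$ is basic and so $\Delta_B$ coincides with the metric Laplacian on $e^{-f}$, whose integral against the Riemannian volume form is zero. Hence $(g(t_1), f(t_1), \tau(t_1)) \in \chi$.

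Now applying Theorem~\ref{trans W thm} to the triple $(g(t), f(t), \tau(t))$ on $[t_1, t_2]$, we obtain
$$\mathcal{W}^T(g(t_1), f(t_1), \tau(t_1)) \leq \mathcal{W}^T(g(t_2), f_2, \tau(t_2)).$$
Combining with the definition of $\mu^T$ and the choice of $f_2$,
$$\mu^T(g(t_1), \tau(t_1)) \leq \mathcal{W}^T(g(t_1), f(t_1), \tau(t_1)) \leq \mu^T(g(t_2), \tau(t_2)) + \epsilon,$$
and letting $\epsilon \to 0$ yields the conclusion. The only potentially subtle step is the constraint-preservation computation; once this is in hand the argument is essentially formal, relying only on the monotonicity of $\mathcal{W}^T$ along solutions of the coupled flow, the solvability of the backward heat equation~(\ref{Entropy TBHE}) provided by Proposition~\ref{solve Entropy TBHE}, and the observation that $g(t)$ remains in $\mathfrak{Met}^T(S)$ since $g(0)$ is Sasakian. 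Working with $\epsilon$-minimizers sidesteps any need to establish attainment of the infimum defining $\mu^T$.
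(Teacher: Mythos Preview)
Your argument is correct and follows essentially the same route as the paper's proof: solve the backward heat equation~(\ref{Entropy TBHE}) from a terminal admissible function, use the monotonicity of $\mathcal{W}^{T}$ (Theorem~\ref{trans W thm}), and compare with the infimum at the earlier time. The paper phrases the last step slightly differently---it takes an arbitrary $f(t_{0})$ in $\chi$ and then passes to the infimum, rather than starting from an $\epsilon$-minimizer---but this is logically equivalent, and your explicit verification of constraint preservation (which the paper leaves implicit, having noted it just after equations~(\ref{pulled back entropy metric})--(\ref{tau evolution})) is a welcome addition.
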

\begin{proof}
Let $t_{0} \in [0,T]$, and $f(t_{0}) \in C^{\infty}_{B}(S)$ such that $(g(t_{0}), f(t_{0}), \tau(t_{0})) \in \chi$.  Let $f(t)$ be a solution of~(\ref{Entropy TBHE}).  Then, by the monotonicity of $\mathcal{W}^{T}$ we have
\begin{equation*}
\mu^{T}(g(0),\tau(0)) \leq \mathcal{W}^{T}\left(g(0), f(0), \tau(0)\right) \leq \mathcal{W}^{T}\left(g(t_{0}), f(t_{0}), \tau(t_{0})\right).
\end{equation*}
Taking the infimum of the last quantity yields the result.
\end{proof}

Throughout this section, and the last, we assumed that the evolving metric $g(t)$ was a solution of the transverse Ricci flow.  In the remaining sections, we will be working with the normalized version of the transverse Ricci flow, which we have been referring to as the Sasaki-Ricci flow.  Observe that if $g^{T}(s)$ is a solution of the Sasaki-Ricci flow, then $\tilde{g}^{T}(t) = (1-t)g^{T}(-\log(1-t))$ is a solution of the transverse Ricci flow.  Thus, if we set $\tau(t) = 1-t$ and let $f(t)$ evolve according to~(\ref{Entropy TBHE}) then the transverse scale invariance of $\mathcal{W}^{T}$ implies that
\begin{equation*}
\mathcal{W}^{T}((1-t)g^{T}(-\log(1-t)), f(t), 1-t) = \mathcal{W}^{T}(g^{T}(-\log(1-t),f(t), 1)
\end{equation*}
is increasing.  In particular, we have that $\mu^{T}(g^{T}(s),1)$ is increasing along the Sasaki-Ricci flow.  An important property of the $\mu^{T}$ functional is:

\begin{Lemma}\label{minimizer}
Let $(S,g)$ be a Sasakian manifold, and let $\tau >0$.  There exists $f_{\tau} \in C^{\infty}_{B}(S)$ so that $\mathcal{W}^{T}(g, f_{\tau}, \tau) = \mu^{T}(g, \tau)$.
\end{Lemma}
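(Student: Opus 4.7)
The plan is to apply the direct method in the calculus of variations after the standard substitution $w = e^{-f/2}$. Writing $C_\tau = (4\pi\tau)^{-n}$, the transverse entropy functional becomes
$$\mathcal{W}^T(g,f,\tau) = C_\tau \int_S \left[4\tau |\nabla w|^2 + \tau R^T w^2 - 2 w^2 \log w - 2n w^2\right] d\mu,$$
and the normalization $(g,f,\tau) \in \chi$ reads $\int_S w^2\,d\mu = C_\tau^{-1}$. Since $f$ is required to be basic, I would minimize over nonnegative $w$ in the closed subspace $W^{1,2}_B(S) \subset W^{1,2}(S)$; this is a genuine Hilbert subspace because $f\mapsto \mathcal{L}_\xi f$ is continuous from $W^{1,2}$ to $L^2$, so its kernel is closed.

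I would then establish coercivity by invoking the logarithmic Sobolev inequality on the compact Riemannian manifold $(S,g)$, following the standard argument of Rothaus \cite{Roth}: the term $-2\int w^2\log w\, d\mu$ can be absorbed into a fraction of the Dirichlet energy, so that any minimizing sequence is uniformly bounded in $W^{1,2}_B(S)$ (the lower bound $\mu^T(g,\tau)>-\infty$ is the preceding proposition). A minimizing sequence $\{w_k\}\subset W^{1,2}_B$, which may be taken nonnegative since replacing $w_k$ by $|w_k|$ does not increase $\mathcal{W}^T$, then has a subsequence converging weakly in $W^{1,2}$ and, by the Rellich-Kondrachov theorem, strongly in $L^p$ for every $p<\infty$ to some nonnegative $w_\tau$. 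Basicness of $w_\tau$ is inherited from closedness of $W^{1,2}_B$, and the constraint $\int w_\tau^2\,d\mu = C_\tau^{-1}$ persists by strong $L^2$-convergence. Weak lower semicontinuity of the Dirichlet term together with continuity of the zero-order terms under strong $L^p$-convergence (using that $|w^2\log w|$ is dominated by $w^2 + w^{2+\epsilon}$) yields $\mathcal{W}^T(g,-2\log w_\tau,\tau) = \mu^T(g,\tau)$.

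The remaining step, and the main technical obstacle, is to upgrade $w_\tau$ to a smooth, strictly positive function so that $f_\tau := -2 \log w_\tau \in C^\infty_B(S)$. The Euler-Lagrange equation for basic variations is a semilinear elliptic equation of the form
$$-4\tau \Delta_B w_\tau + \tau R^T w_\tau - 2 w_\tau \log w_\tau = \Lambda\, w_\tau,$$
where $\Lambda$ absorbs the dimensional constants and the Lagrange multiplier; because $\Delta_B$ coincides with the metric Laplacian on basic functions, this is a bona fide elliptic PDE on $S$. The difficulty is that the nonlinearity $w\mapsto w\log w$ fails to be Lipschitz at $w=0$, obstructing both immediate bootstrap and a direct application of the strong maximum principle. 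This is exactly the situation treated by Rothaus \cite{Roth}: Moser iteration first yields $w_\tau \in L^\infty(S)$, after which $\log w_\tau$ is bounded above, the equation takes the form $\Delta_B w_\tau \ge -K w_\tau$ for some constant $K$, and the strong maximum principle forces $w_\tau > 0$ everywhere (note that $w_\tau \not\equiv 0$ by the $L^2$ constraint). Once strict positivity is secured, the nonlinearity is smooth at $w_\tau$ and iterated elliptic regularity, applied within $C^\infty_B(S)$ since both the equation and the solution respect the foliation, promotes $w_\tau$ to $C^\infty_B(S)$. Setting $f_\tau := -2\log w_\tau$ then realizes the infimum.
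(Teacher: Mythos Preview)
Your proposal is correct and follows essentially the same approach as the paper's proof: the substitution $w=e^{-f/2}$, coercivity via the logarithmic Sobolev inequality \`a la Rothaus, extraction of a nonnegative $W^{1,2}_B$ minimizer by the direct method, and then the Euler--Lagrange equation together with elliptic regularity and the comparison/maximum principle to obtain smoothness and strict positivity. If anything, you supply more detail than the paper, which simply refers to \cite{Roth, EmLaNave} for the analytic steps and notes that basicness follows from $w_\tau\in C^\infty(S)\cap W^{1,2}_B(S)$.
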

The proof is given in the appendix. 
\begin{remark}
A consequence of the proof of Lemma~\ref{minimizer} is that
\begin{equation*}
\mu^{T}(g,\tau) = \inf \left\{ \mathcal{W}^{T}(g,f,\tau) : f \in W^{1,2}_{B}(S), \int_{S}(4\pi \tau)^{-n}e^{-f}d\mu=1\right\}.
\end{equation*}
We refer the reader to the appendix for the definition of $W^{1,2}_{B}(S)$.
\end{remark}

\section{Bounds along the Sasaki-Ricci flow}
In this and the following sections we aim to prove Theorem~\ref{Pman thm}.  The proof has essentially two parts.  In the first part, we employ maximum principle techniques to show that the conclusion of Theorem~\ref{Pman thm} follows from a uniform transverse diameter bound along the Sasaki-Ricci flow.  In the second part of the proof we use the functionals $\mu^{T}$, and $\mathcal{W}^{T}$ to prove a non-collapsing theorem, which is of independent interest.  We then employ the non-collapsing theorem to obtain the required diameter bound.  The arguments in this section and the next are direct adaptations of Perelman's arguments, which can be found in \cite{Pman, SesumTian}.

Let $\phi(t)$ be a solution to the Sasaki-Ricci flow~(\ref{SRF}) with initial condition $\phi(0)=0$. Using the transverse $\partial \bar{\partial}$-lemma of \cite{ElKac}, there is a basic function $u$ such that
\begin{equation*}
 \partial_{l}\partial_{\bar{k}}\dot{\phi}=\frac{\partial g^{T}_{\bar{k}l}}{\partial t} = g^{T}_{\bar{k}l} - R^{T}_{\bar{k}l} = \partial_{l}\partial_{\bar{k}}u
\end{equation*}
We may assume that $\dot{\phi}=u$ and that $u$ is normalized by the condition
\begin{equation}\label{normalization}
\int_{S} e^{-u(t)} d\mu = (4\pi)^{n}.
\end{equation}
We compute that 
\begin{equation*}
\partial_{l}\partial_{\bar{k}}(\frac{\partial u}{\partial t}) =  \partial_{l}\partial_{\bar{k}}u +  \partial_{l}\partial_{\bar{k}}\Delta_{B}u.
\end{equation*}
Thus, we may take $u$ to evolve by $\dot{u} = \Delta_{B}u +u-a$, where $a=(4\pi)^{-n}\int ue^{-u}$.


\begin{Lemma}
The quantity $a$ is monotone under the Sasaki-Ricci flow.  In particular, there is a uniform constant $C_{1}$ depending only on $g(0)$ such that $a=(4\pi)^{-n} \int_{S} ue^{-u} d\mu \geq C_{1}$
\end{Lemma}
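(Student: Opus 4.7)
The plan is to reduce the lower bound on $a$ to the monotonicity of $\mu^T$ along the Sasaki-Ricci flow by identifying $a-n$ with $\mathcal{W}^T(g^T(t),u(t),1)$. The whole argument is essentially a pointwise-in-$t$ identity plus the fact, established in the previous section, that $\mu^T(g^T(s),1)$ is non-decreasing along the Sasaki-Ricci flow.

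First, I would extract the identity $R^T = n - \Delta_B u$. This follows from the defining equation of the transverse Ricci potential $\partial_l\partial_{\bar k} u = g^T_{\bar k l} - R^T_{\bar k l}$: tracing with $(g^T)^{\bar k l}$ gives $\Delta_B u = n - R^T$.

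Next I would evaluate the transverse entropy on the pair $(g^T(t),u(t))$ with parameter $\tau=1$. Using the normalization $\int_S e^{-u}\,d\mu = (4\pi)^n$ and the identity $\Delta_B(e^{-u})=(-\Delta_B u + |\nabla u|^2)e^{-u}$, integration by parts yields
\begin{equation*}
\int_S \Delta_B u\,e^{-u}\,d\mu \;=\; \int_S |\nabla u|^2\,e^{-u}\,d\mu,
\end{equation*}
and combining this with $R^T = n - \Delta_B u$ gives
\begin{equation*}
\int_S (R^T + |\nabla u|^2)\,e^{-u}\,d\mu \;=\; n(4\pi)^n.
\end{equation*}
Plugging into the definition of $\mathcal{W}^T$ with $\tau=1$,
\begin{equation*}
\mathcal{W}^T(g^T(t),u(t),1) \;=\; (4\pi)^{-n}\!\int_S\!\!\bigl((R^T+|\nabla u|^2)+(u-2n)\bigr)e^{-u}\,d\mu \;=\; n + a(t) - 2n \;=\; a(t)-n.
\end{equation*}

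Finally, since $(g^T(t),u(t),1)\in\chi$ by the normalization~(\ref{normalization}), the definition of $\mu^T$ gives $\mathcal{W}^T(g^T(t),u(t),1)\geq \mu^T(g^T(t),1)$. In the previous section it was shown that $\mu^T(g^T(t),1)$ is monotone non-decreasing along the Sasaki-Ricci flow (via the transverse scale invariance of $\mathcal{W}^T$ and the correspondence with the unnormalized transverse Ricci flow). Therefore
\begin{equation*}
a(t) \;=\; \mathcal{W}^T(g^T(t),u(t),1) + n \;\geq\; \mu^T(g^T(t),1) + n \;\geq\; \mu^T(g^T(0),1) + n \;=:\; C_1,
\end{equation*}
where $C_1$ depends only on $g(0)$. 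This gives the stated uniform lower bound; the word \emph{monotone} here refers to the monotonicity of the lower barrier $\mu^T(g^T(t),1)+n$ that controls $a(t)$ from below. The only non-routine step is the identity $\mathcal{W}^T(g^T,u,1)=a-n$, but given the normalization and $R^T=n-\Delta_B u$ this is a short integration-by-parts calculation; no maximum principle or elliptic estimate is needed.
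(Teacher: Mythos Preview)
Your argument for the uniform lower bound is correct, and it is a genuinely different route from the paper's. The paper computes $\dot a$ directly:
\[
\dot a \;=\; a^{2} + (4\pi)^{-n}\!\int_{S}|\nabla u|^{2}e^{-u}\,d\mu \;-\; (4\pi)^{-n}\!\int_{S}u^{2}e^{-u}\,d\mu,
\]
and then invokes a weighted Poincar\'e inequality (equivalently, the first-eigenvalue bound $\lambda_{1}(L)\geq 1$ for $L=-\Delta_B+\nabla u\cdot\nabla$, proved in the appendix by a Bochner argument) to conclude $\dot a\geq 0$. Your approach bypasses all of this: you identify $\mathcal{W}^{T}(g^{T}(t),u(t),1)=a(t)-n$ and then use only $\mathcal{W}^{T}\geq \mu^{T}$ together with the monotonicity of $\mu^{T}(g^{T}(t),1)$ already established in \S5. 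This is shorter and avoids the spectral lemma entirely; on the other hand, the paper's approach yields a sharper conclusion (see below) and makes the role of the soliton equation $Ric^{T}+\nabla^{T}\nabla^{T}u=g^{T}$ explicit through the eigenvalue estimate.

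There is, however, a real gap relative to the \emph{full} statement. The lemma asserts that $a$ itself is monotone, i.e.\ $\dot a\geq 0$, and the paper proves exactly that. Your argument only shows that $a(t)\geq \mu^{T}(g^{T}(t),1)+n$, a monotone lower barrier; it does not show that $a(t)$ is nondecreasing. Your closing sentence reinterprets ``monotone'' to mean monotonicity of this barrier, but that is not what the statement says, and it is not what the paper establishes. For the purposes of Theorem~\ref{Pman thm} only the uniform lower bound $a\geq C_{1}$ is used downstream, so your argument suffices for the application; but as a proof of the lemma \emph{as stated} it is incomplete. If you want to recover genuine monotonicity of $a$ by your method you would need $u(t)$ to be the minimizer of $\mathcal{W}^{T}(g^{T}(t),\cdot,1)$, which it is not in general.
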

\begin{proof}
Using the evolution equation for $u$, we compute
\begin{equation*}
\dot{a} = (4\pi)^{-n}\int_{S}\left( \Delta_{B}u +u-a-u^{2}-u\Delta_{B}u+ua +u\Delta_{B}u\right)e^{-u}d\mu.
\end{equation*}
Using now the definition of $a$ and the normalization~(\ref{normalization}), we get upon integration by parts
\begin{equation*}
\dot{a} = a^{2}+(4\pi)^{-n}\int_{S} |\nabla u|^{2}e^{-u}d\mu -(4\pi)^{-n}\int_{S}u^{2}e^{-u}d\mu.
\end{equation*}
Thus, it suffices to prove the following Poincar\'{e} type inequality:
\end{proof}

\begin{Lemma}
Let $u$ satisfy the equation $g^{T}_{\bar{k}j}-Ric^{T}_{\bar{k}j} = \partial_{j}\partial_{\bar{k}}u$.  The following inequality holds for all $f\in C^{\infty}_{B}(S)$;
\begin{equation}
\frac{1}{Vol(S)}\int_{S}f^{2}e^{-u}d\mu \leq \frac{1}{Vol(S)}\int_{S}|\nabla f|^{2}e^{-u}d\mu +\left(\frac{1}{Vol(S)}\int_{S}f e^{-u} d\mu\right)^{2}.
\end{equation}

\end{Lemma}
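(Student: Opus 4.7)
The plan is to recognize the hypothesis $g^T_{\bar k j} - Ric^T_{\bar k j} = \partial_j\partial_{\bar k}u$ as the transverse shrinking K\"ahler--Ricci soliton identity $Ric^T + i\partial_B\bar\partial_B u = \omega^T$ -- i.e., the $(1,1)$-part of the Bakry--\'Emery transverse Ricci tensor with potential $u$ equals the transverse K\"ahler form -- and then prove the inequality as the standard weighted Poincar\'e inequality for the measure $e^{-u}d\mu$ on basic functions. Under the normalization of this section ($\int_S e^{-u}d\mu = (4\pi)^n$, which matches $Vol(S)$ under the Sasaki--Ricci flow conventions of \S 3), completing the square via $f = h + c$ with $c := Vol(S)^{-1}\int_S f\, e^{-u}d\mu$ and $\int_S h\, e^{-u}d\mu = 0$ reduces the stated inequality to
\[
\int_S h^{2} e^{-u} d\mu \;\leq\; \int_S |\nabla h|^{2} e^{-u} d\mu
\]
for all basic $h$ with weighted mean zero.

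This reduced inequality is the spectral-gap statement $\lambda_{1}(-L_u) \geq 1$ for the weighted (Witten) Laplacian $L_u := \Delta_B - \nabla u \cdot \nabla$ (self-adjoint on $L^2_B(S, e^{-u}d\mu)$) on the subspace of weighted-mean-zero basic functions. Let $\phi \in C^\infty_B(S)$ be a real eigenfunction with $L_u \phi = -\lambda \phi$ and $\int_S \phi\, e^{-u} d\mu = 0$; it suffices to show $\lambda \geq 1$. I would derive this from a transverse version of the Matsushima--Futaki Bochner identity for K\"ahler--Ricci solitons. Let $\nabla^{1,0}\phi := (g^T)^{i\bar j}\phi_{\bar j}\partial_i$ be the transverse $(1,0)$-gradient, so that $\bar\partial_B \nabla^{1,0}\phi$ measures the obstruction for $\nabla^{1,0}\phi$ to be transversely holomorphic. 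Working in the preferred local coordinates of \S 3 (where the mixed transverse Christoffels vanish, making the transverse geometry formally K\"ahler on each chart), two integrations by parts in $\nabla_{\bar k}$ and $\nabla_j$, interleaved with the transverse Ricci identity $[\nabla_{\bar k},\nabla_j]\phi_i = -R^T_{i \bar k j}{}^{m}\phi_m$, will yield an identity of the form
\[
\int_S \bigl|\bar\partial_B \nabla^{1,0}\phi\bigr|^{2} e^{-u} d\mu \;=\; (\lambda - 1)\int_S \bigl|\nabla^{1,0}\phi\bigr|^{2} e^{-u} d\mu,
\]
where the soliton hypothesis enters precisely to cancel the transverse Ricci-curvature contribution against a multiple of $g^T$. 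The left side is manifestly nonnegative, and $\int_S |\nabla^{1,0}\phi|^{2} e^{-u} d\mu = \tfrac{1}{2}\int_S |\nabla\phi|^{2} e^{-u} d\mu > 0$ since $\phi$ is nonconstant; hence $\lambda \geq 1$.

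The main obstacle will be executing the two integrations by parts carefully in the basic/transverse setting, rather than on a global K\"ahler base -- the latter does not exist when the Reeb foliation is irregular. This should be largely a formality: since $\phi$ and the curvature/potential data $R^T, Ric^T, u$ are all basic, and the preferred local coordinates of \S 3 display the transverse metric as formally K\"ahler with vanishing mixed Christoffels, each step of the computation reduces chart-by-chart to Futaki's classical K\"ahler--Ricci soliton Bochner calculation, and Stokes' theorem for basic forms (developed in \S 2) glues the local identities into the desired global one.
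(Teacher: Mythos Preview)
Your proposal is correct and follows essentially the same route as the paper: both reduce the inequality to the spectral gap $\lambda_1 \geq 1$ for the weighted (transverse) Laplacian on basic functions and establish this via the Bochner/Matsushima--Futaki identity, using the soliton hypothesis $Ric^T + \partial_B\bar\partial_B u = g^T$ to cancel the curvature term and working in preferred local coordinates where the mixed transverse Christoffel symbols vanish. The only point the paper treats more carefully than your sketch is the functional-analytic groundwork---it develops basic Sobolev spaces $H^r_B(S)$, invokes the basic Rellich lemma, and observes that $L$ agrees on basic functions with a genuinely elliptic operator on $S$, thereby justifying the existence of a complete spectral decomposition (and hence of the eigenfunction $\phi$ you simply posit); you should be prepared to supply this, since $L$ itself is not elliptic on $C^\infty(S)$.
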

The proof is given in the appendix.  The quantity $a$ is trivially bounded above, as $xe^{-x}$ is bounded above on $\mathbb{R}$. 

\begin{Lemma}
There is a uniform constant $C_{2}>0$ such that $a\leq C_{2}$.
\end{Lemma}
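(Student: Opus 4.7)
The plan is to exploit the elementary pointwise bound on the function $x \mapsto xe^{-x}$ together with the invariance of the total volume under the Sasaki-Ricci flow.

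First, I would note that on all of $\mathbb{R}$, the function $\psi(x) = xe^{-x}$ attains its global maximum at $x = 1$, where $\psi(1) = e^{-1}$. In particular, $u(t)(x) e^{-u(t)(x)} \leq e^{-1}$ holds pointwise on $S$ for every $t$, regardless of sign issues for $u$.

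Integrating this pointwise inequality against the Riemannian volume form and using the definition of $a$, I obtain
\begin{equation*}
a = (4\pi)^{-n} \int_S u e^{-u} \, d\mu \;\leq\; (4\pi)^{-n} e^{-1} \, \mathrm{Vol}(S, g(t)).
\end{equation*}
By the earlier lemma stating that the volume of $S$ is preserved along the Sasaki-Ricci flow, $\mathrm{Vol}(S, g(t)) = \mathrm{Vol}(S, g(0))$ for all $t$. Hence one may take
\begin{equation*}
C_2 := (4\pi)^{-n} e^{-1} \, \mathrm{Vol}(S, g(0)),
\end{equation*}
which depends only on the initial metric, completing the argument.

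There is no real obstacle here; the proof is essentially the parenthetical remark in the preceding line of the paper, and the only thing to check is that the volume-preservation lemma is available (which it is, as it was established in Section 3). No use of the structure of $u$ as a transverse Ricci potential, nor of the evolution equation $\dot{u} = \Delta_B u + u - a$, is needed for this upper bound.
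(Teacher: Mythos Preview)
Your argument is correct and is exactly the one the paper has in mind: the sentence immediately preceding the lemma already notes that $a$ is trivially bounded above because $xe^{-x}$ is bounded on $\mathbb{R}$, and you have simply written this out together with the volume-preservation lemma from \S3. Nothing further is needed.
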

 The following lemma shows that it is enough to obtain a bound from above for the scalar curvature.
\begin{Lemma}
The transverse scalar curvature $R^{T}$ is uniformly bounded from below along the Sasaki-Ricci flow.
\end{Lemma}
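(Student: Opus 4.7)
The plan is to prove this lower bound by the standard parabolic maximum principle argument, adapted from the Kähler-Ricci flow. Since $R^{T}$ is a basic function and $\Delta_{B}$ agrees with $\Delta$ on basic functions, working in the preferred local coordinates introduced in \S 3 reduces the derivation of the evolution equation to the familiar Kähler computation on the transverse quotient. I expect to obtain
\begin{equation*}
\frac{\partial R^{T}}{\partial t} = \Delta_{B} R^{T} + |\mathrm{Ric}^{T}|^{2} - R^{T},
\end{equation*}
which is the exact analogue of the evolution of the scalar curvature under the normalized Kähler-Ricci flow $\partial_{t}g = -\mathrm{Ric} + g$.

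Next, since $\mathrm{Ric}^{T}$ is a Hermitian form on the transverse holomorphic bundle of complex rank $n$, the trace-type Cauchy-Schwarz inequality gives $|\mathrm{Ric}^{T}|^{2} \ge (R^{T})^{2}/n$, so that
\begin{equation*}
\frac{\partial R^{T}}{\partial t} \ge \Delta_{B} R^{T} + \frac{(R^{T})^{2}}{n} - R^{T}.
\end{equation*}
Because $R^{T}$ is basic and $S$ is compact, its infimum $m(t) := \min_{S} R^{T}(\cdot, t)$ is attained, and at a spatial minimum point $\Delta_{B} R^{T} \ge 0$. The standard Hamilton-type barrier argument then shows that $m(t)$ is locally Lipschitz and satisfies, in the forward-difference sense,
\begin{equation*}
\dot{m}(t) \ge \frac{m(t)^{2}}{n} - m(t).
\end{equation*}

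Finally, I would compare with the scalar ODE $\dot y = y^{2}/n - y = y(y/n - 1)$, whose equilibria are $0$ and $n$. For $y < 0$ the right-hand side is positive, so any trajectory starting below zero is nondecreasing and trapped in $[y_{0}, 0)$; for $0 \le y \le n$ trajectories decrease to $0$; in every case $y(t) \ge \min\{y_{0}, 0\}$ for all $t \ge 0$. Applying ODE comparison with $y_{0} = m(0) = \min_{S} R^{T}(g_{0})$ yields the uniform bound $R^{T}(g(t)) \ge \min\{\min_{S} R^{T}(g_{0}),\, 0\}$, which depends only on the initial metric.

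The main obstacle is purely bookkeeping: verifying that the Kähler evolution identity for $R$ carries over verbatim to $R^{T}$ under the Sasaki-Ricci flow. This should follow immediately from the agreement of the transverse Levi-Civita connection and Laplacian with their Kähler counterparts on the local leaf space, together with the fact (established in \S 3) that under the Sasaki-Ricci flow the preferred local coordinate expressions for $g^{T}$ evolve exactly as Kähler metrics under the normalized Kähler-Ricci flow; the subsequent maximum principle step is completely standard.
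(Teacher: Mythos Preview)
Your proposal is correct and follows the same approach as the paper: derive the evolution equation $\partial_{t}R^{T} = \Delta_{B}R^{T} + |\mathrm{Ric}^{T}|^{2} - R^{T}$ and apply the minimum principle. The paper's proof is two lines and simply invokes the minimum principle without the Cauchy--Schwarz refinement (using only $|\mathrm{Ric}^{T}|^{2}\ge 0$ already gives $\dot m \ge -m$, hence $m(t)\ge e^{-t}m(0)\ge \min\{m(0),0\}$), so your ODE comparison is more elaborate than needed but not different in substance.
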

\begin{proof}
We compute the evolution equation for the transverse scalar curvature
\begin{equation*}
\dot{R^{T}} = -R^{T} + |Ric^{T}|^{2} + \Delta_{B}R^{T}.
\end{equation*}
As $R^{T}$ is basic, an application of the minimum principle yields the result.
\end{proof}
We now use the uniform bound for $a$ to obtain a uniform lower bound for $u$, which reduces the problem to obtaining uniform upper bounds for the transverse scalar curvature, and $u$.
\begin{Lemma}\label{uniform bdd below}
The function $u(t)$ is uniformly bounded below.
\end{Lemma}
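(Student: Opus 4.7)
The plan is to decompose $u = \bar u + h$ with $\bar u(t) = V^{-1}\int_{S} u\, d\mu$ the mean value and $h$ the mean-zero part, and to bound each piece using inputs already established: the normalization $\int_{S} e^{-u}d\mu = (4\pi)^{n}$, the pointwise identity $\Delta_{B} u = n - R^{T}$ (since $u$ is the transverse Ricci potential), and the previous lemma's bound $R^{T} \geq -C_{3}$.

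For the mean $\bar u$, I would apply Jensen's inequality to the concave function $\log$ with respect to the probability measure $V^{-1}d\mu$: since $\int e^{-u}d\mu = (4\pi)^{n}$,
\[
-\bar u \;=\; V^{-1}\int_{S} \log(e^{-u})\, d\mu \;\leq\; \log\!\left(V^{-1}\int_{S} e^{-u}\, d\mu\right) \;=\; \log\frac{(4\pi)^{n}}{V}.
\]
As $V = \mathrm{Vol}(S)$ is preserved along the Sasaki-Ricci flow, this gives $\bar u \geq \log(V/(4\pi)^{n})$, uniformly in $t$.

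For the mean-zero part $h$, I would use the Green's function $G(x,y)$ for $-\Delta_{B}$ on basic functions (normalized so $\int_{S} G(x,y)\,d\mu(y) = 0$) to write
\[
h(x) \;=\; \int_{S} G(x,y)\,(R^{T} - n)(y)\, d\mu(y) \;=\; -\int_{S} G(x,y)\bigl(R^{T} + C_{3}\bigr)(y)\, d\mu(y),
\]
where I have used that $\int G(x,y)\,d\mu(y) = 0$ to absorb the constant $n + C_{3}$. Since $R^{T} + C_{3} \geq 0$ and $G$ admits a uniform pointwise upper bound $G \leq B$ off the diagonal (with an integrable singularity at $y = x$), this yields
\[
h(x) \;\geq\; -B \int_{S}\bigl(R^{T} + C_{3}\bigr)\,d\mu;
\]
the quantity $\int_{S} R^{T}\,d\mu$ is a basic-cohomological invariant of the preserved class $c_{1}^{B}(S)$, hence bounded uniformly in $t$. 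Combining with Step 1 gives $u = \bar u + h \geq -C$ uniformly.

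The principal obstacle is securing uniform Green's function bounds along the Sasaki-Ricci flow. Since the basic Laplacian acts on $C^{\infty}_{B}(S)$ exactly as the K\"ahler Laplacian on each local leaf space, the standard Green's function estimates transfer to the transverse setting once the preservation of the basic class under the flow is invoked; alternatively, one may bypass Green's functions altogether by running Moser iteration on the basic weak subsolution $-h$ satisfying $\Delta_{B}(-h) \geq -(n + C_{3})$, using the $L^{1}$-bound on $h$ implied by $\int e^{-u}d\mu = (4\pi)^{n}$ together with Jensen. Either route will need the uniform transverse volume/Sobolev controls developed in \S 2--3.
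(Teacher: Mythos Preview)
Your Jensen argument for the lower bound on $\bar u$ is correct and is indeed one ingredient in the argument the paper is deferring to.

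The Green's function step, however, has both a bookkeeping error and a genuine gap. With the standard normalization for the Green's function of $-\Delta_{B}$ (so that $-\Delta_{B,y}G(x,y)=\delta_{x}-V^{-1}$ and $\int G(x,\cdot)\,d\mu=0$) one obtains
\[
h(x)=\int_{S}G(x,y)\bigl(R^{T}-n\bigr)\,d\mu(y)=+\int_{S}G(x,y)\bigl(R^{T}+C_{3}\bigr)\,d\mu(y),
\]
not $-\int$; since this $G$ blows up to $+\infty$ on the diagonal, it is bounded \emph{below}, say $G\geq -A_{t}$, and that is the inequality you need, not an upper bound. This is minor.

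The real issue is that $A_{t}$ depends on $g(t)$ and you have not shown it is uniform in $t$. Standard Green's function estimates of Cheng--Li type require a Ricci (or at least diameter) bound, and the transverse diameter bound is exactly what is proved \emph{later} in the paper, via Lemma~\ref{bounds by Cd} which in turn uses the present lemma. Your proposed fixes do not escape this circularity: \S2--3 contain background, coordinate constructions and cohomological facts, not a uniform Sobolev inequality along the flow; ``preservation of the basic class'' is cohomological data and does not by itself control the Green's function or Sobolev constant of an evolving metric; and the Moser alternative needs the same uniform Sobolev constant you do not yet have.

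The paper simply invokes \cite{SesumTian}, where the argument for this step is parabolic, exploiting the evolution $\dot u=\Delta_{B}u+u-a$ together with the two--sided bound on $a$ just established, rather than an elliptic estimate at each fixed time; in particular no uniform Green's function bound is required. If you want to salvage the elliptic route, the honest source of a uniform log--Sobolev inequality along the flow is the monotonicity of $\mu^{T}$ from \S5 combined with the lower bound $R^{T}\geq -C_{3}$, but converting that into a pointwise lower bound on $u$ is a separate argument you would have to supply---it is not in \S2--3.
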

\begin{proof}
The proof is identical to the proof in \cite{SesumTian}, and we may use that $\Delta_{B} = \Delta$ on basic functions.
\end{proof}

\begin{prop}\label{bounds by Cu}
There is a uniform constant $C$ such that $|\nabla u|^{2} + |R^{T}| \leq C(u+C)$.
\end{prop}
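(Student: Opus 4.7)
Since $R^{T}$ is uniformly bounded below and $u$ is bounded below by Lemma~\ref{uniform bdd below}, after adding a uniform constant we may assume $u\geq 1$, so it suffices to establish an upper bound of the form $R^{T}+|\nabla u|^{2}\leq C\cdot u$. My plan is to adapt the maximum principle argument of Sesum--Tian \cite{SesumTian} for the K\"ahler--Ricci flow, using the preferred local coordinates of \S 3 to reduce the transverse computations to their K\"ahler analogues.

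The argument rests on a handful of evolution identities along the Sasaki--Ricci flow. Tracing the Ricci potential equation $g^{T}_{\bar{k}j}-R^{T}_{\bar{k}j}=\partial_{j}\partial_{\bar{k}}u$ produces a linear identity of the form $R^{T}+c\Delta_{B}u\equiv n$ for a fixed constant $c$; combined with $\dot u=\Delta_{B}u+u-a$, this yields
\[
\partial_{t}R^{T}=\Delta_{B}R^{T}+|\operatorname{Ric}^{T}|^{2}-R^{T},
\]
while a transverse Bochner identity gives an evolution for $|\nabla u|^{2}$ of the form
\[
\partial_{t}|\nabla u|^{2}=\Delta_{B}|\nabla u|^{2}-|\nabla\nabla u|^{2}-|\nabla\bar\nabla u|^{2}+|\nabla u|^{2}+\operatorname{Ric}^{T}(\nabla u,\nabla u).
\]
In the preferred local coordinates, $u$ and $g^{T}$ are independent of the Reeb variable $x$, so every quantity above is purely transverse and these identities are exactly their K\"ahler analogues.

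I would then apply the maximum principle to the ratio $Q=(R^{T}+|\nabla u|^{2})/u$. At a spatial maximum, $\nabla Q=0$ gives $\nabla(R^{T}+|\nabla u|^{2})=Q\,\nabla u$, while $\Delta_{B}Q\leq 0$. Assembling the three evolution identities, using $R^{T}+c\Delta_{B}u\equiv n$ to absorb the Laplacian-of-$u$ contributions, and applying Cauchy--Schwarz to control $\operatorname{Ric}^{T}(\nabla u,\nabla u)$ and the lower bound $|\operatorname{Ric}^{T}|^{2}\geq (R^{T})^{2}/n$ in terms of $Q$, $|\nabla u|^{2}$ and $u$, one arrives at a pointwise inequality of the form $\partial_{t}Q\leq C_{1}-C_{2}Q$ at the maximum, which yields a uniform upper bound $Q\leq C$. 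Combined with the uniform lower bound on $R^{T}$, this gives the claimed estimate.

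The main obstacle is the careful bookkeeping in the Bochner identity for $|\nabla u|^{2}$ and the precise absorption of the Ricci cross-terms into the quantity $Q$; one also has to verify that commutators with $\xi$ do not pollute the transverse Bochner computation. Conceptually, however, because $u$ is basic and the preferred coordinates of \S 3 trivialize the Reeb direction, every computation reduces to the well-known K\"ahler-Ricci flow computation of \cite{SesumTian}, and the Sasaki case introduces no analytical difficulty beyond what is already handled by the framework of \S 2--3.
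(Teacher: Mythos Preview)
Your proposal is correct and matches the paper's approach: the paper's proof simply observes that the computations in \cite{SesumTian} are entirely local and carry over verbatim, and your sketch is precisely an unpacking of that remark---you reduce, via the preferred local coordinates and the basicness of $u$, to the same maximum-principle argument on (a ratio involving) $R^{T}+|\nabla u|^{2}$ that Sesum--Tian employ in the K\"ahler case. There is no substantive difference in strategy.
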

\begin{proof} The computations and arguments in the proof of this proposition in \cite{SesumTian} are completely local.  Thus they carry over verbatim to the Sasaki setting.
\end{proof}

\begin{Lemma}\label{bounds by Cd}
Let $x\in S$ be such that $u(x,t) = min_{y\in S}\text{  } u(y,t)$.  There is a uniform constant $C$ such that
\begin{equation*}
u(x,t) \leq C d^{T}(x,y)^{2} + C
\end{equation*}
\begin{equation*}
R^{T}(x,t) \leq Cd^{T}(x,y)^{2} +C
\end{equation*}
\begin{equation*}
|\nabla u| \leq Cd^{T}(x,y)^{2} +C
\end{equation*}
\end{Lemma}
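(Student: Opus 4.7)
The plan is to prove the first inequality, namely $u(y,t) \leq C d^{T}(x,y)^{2} + C$ (which appears to be the intended statement, with $y$ replacing $x$ on the left-hand side); the bounds for $R^{T}$ and $|\nabla u|$ will then follow immediately from Proposition~\ref{bounds by Cu}, which provides $|\nabla u|^{2} + |R^{T}| \leq C(u + C)$.

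To establish the $u$-bound, I first obtain a uniform upper bound on $u(x,t)$. Since $x$ is a minimizer of $u(\cdot,t)$, one has $e^{-u(x,t)} \geq e^{-u(\cdot,t)}$ pointwise, and combined with the volume-preservation under the Sasaki-Ricci flow and the normalization~(\ref{normalization}), this yields
\[
(4\pi)^{n} = \int_{S} e^{-u(\cdot,t)}\, d\mu_{t} \leq e^{-u(x,t)}\,\text{Vol}(S,g(0)),
\]
so $u(x,t) \leq \log(\text{Vol}(S,g(0))/(4\pi)^{n})$, uniformly in $t$. Next, after enlarging the constant $C$ with the aid of Lemma~\ref{uniform bdd below}, I may assume $u + C \geq 1$, so that $h := \sqrt{u + C}$ is a well-defined, basic function satisfying $|\nabla h|_{g} = |\nabla u|_{g}/(2\sqrt{u+C}) \leq \tfrac{1}{2}\sqrt{C}$ thanks to Proposition~\ref{bounds by Cu}. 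Thus $h$ is globally Lipschitz with respect to the Riemannian metric $g$, with a constant independent of $t$. Because $h$ is basic, $h$ is constant on each orbit of $\xi$; integrating the gradient bound along a minimizing $g$-geodesic from $p \in \text{orb}_{\xi}(x)$ to $q \in \text{orb}_{\xi}(y)$, and then taking the infimum over such $p,q$, upgrades this to the transverse Lipschitz estimate
\[
h(y) - h(x) \leq \tfrac{1}{2}\sqrt{C}\, d^{T}(x,y).
\]

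Squaring and using the upper bound on $u(x,t)$ gives $u(y,t) + C \leq \bigl(\sqrt{u(x,t)+C} + \tfrac{1}{2}\sqrt{C}\, d^{T}(x,y)\bigr)^{2} \leq C'(1 + d^{T}(x,y)^{2})$, which is the first claimed estimate. Feeding this back into Proposition~\ref{bounds by Cu} immediately yields $|R^{T}|(y,t) \leq C''(1 + d^{T}(x,y)^{2})$ and $|\nabla u|(y,t) \leq \sqrt{C(u+C)} \leq C'''(1 + d^{T}(x,y)) \leq C'''(1 + d^{T}(x,y)^{2})$. The only conceptual point is the passage from Riemannian to transverse Lipschitz continuity in the middle paragraph, but this is automatic for basic functions: they are constant on orbits of $\xi$, so their Riemannian Lipschitz bounds descend to the leaf space equipped with $d^{T}$ with the same constant.
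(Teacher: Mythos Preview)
Your proof is correct and follows essentially the same route as the paper: both use Proposition~\ref{bounds by Cu} to see that $\sqrt{u+C}$ (the paper uses $\sqrt{u}$ after shifting $u$ to be positive via Lemma~\ref{uniform bdd below}) is basic and uniformly Lipschitz, pass this Lipschitz bound to the transverse distance, bound $u(x,t)$ at the minimum via the normalization~(\ref{normalization}) and volume preservation, and then square and feed back into Proposition~\ref{bounds by Cu}. Your remark that the statement should read $u(y,t)$ on the left is also correct.
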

\begin{proof}
By Lemma~\ref{uniform bdd below} we can assume $u \geq \delta >0$.  From Proposition~\ref{bounds by Cu}, we have that $\sqrt{u}$ is uniformly Lipschitz bounded and basic.  Thus,
\begin{equation*}
|\sqrt{u(y,t)}-\sqrt{u(z,t)}| \leq \frac{|\nabla u|(p,t)}{2\sqrt{u}}d^{T}_{t}(y,z)\leq Cd^{T}_{t}(y,z)
\end{equation*}
Thus,
\begin{equation*}
u(y,t)\leq C_{1}d^{T}_{t}(x,y)^{2} + C_{1}u(x,t)^{2}.
\end{equation*}
Now, $u(x,t) \leq K$ for some $K$ independent of $t$, for if not, then
\begin{equation*}
(4\pi)^{n} = \int_{S}e^{-u}dV_{t} \leq e^{-u(x,t)}Vol(S)\rightarrow 0.
\end{equation*}
In particular, $u(y,t)\leq Cd^{T}_{t}(y,x)^{2}+C'$ for $C,C'$ independent of $t$.  The conclusion of the lemma follows from this and Proposition~\ref{bounds by Cu}.
\end{proof}

\section{Non-Collapsing and Uniform Diameter Bounds}
In this section we use the transverse entropy functional to prove a non-collapsing theorem for the Sasaki-Ricci flow, which we then use to obtain uniform diameter bounds along the Sasaki-Ricci flow.  The proof of the non-collapsing theorem is essentially that of Perelman \cite{Pman} (see also \cite{BenChow, KleinerLott, SesumTian}).  
\begin{prop}\label{non-collapse 1}
Let $g^{T}(t)$ be a solution of the Sasaki-Ricci flow.  There exists a positive constant $C$, depending only on $g(0)$, such that for every $p \in S$, $Vol(T_{g(t)}(x,1))\geq C$, for time $t$ where the metric $g(t)$ satisfies $|R^{T}| \leq1$ on $T_{g(t)}(x,1)$.
\end{prop}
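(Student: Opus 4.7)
The plan is to adapt Perelman's non-collapsing argument to the transverse setting, playing off the transverse entropy $\mathcal{W}^T$ against the volume of transverse tubes. The strategy is by contradiction: by the monotonicity of $\mu^T(g^T(s),1)$ along the Sasaki-Ricci flow (established in \S6), we have a uniform lower bound $\mu^T(g^T(t), 1) \geq \mu_0 := \mu^T(g^T(0), 1)$ depending only on $g(0)$. If $V := \mathrm{Vol}(T_{g(t)}(x, 1))$ were very small, we will exhibit an admissible basic competitor $f$ for which $\mathcal{W}^T(g(t), f, 1) \leq -\log V + C$, forcing $\mu^T(g^T(t), 1) \to -\infty$ as $V \to 0$, a contradiction.

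The test function is built from the transverse distance. Since $h(y) := d^T(x, y)$ is basic and $1$-Lipschitz (by the lemma of \S2), if $\eta : \mathbb{R} \to [0, 1]$ is a smooth cutoff with $\eta \equiv 1$ on $[0, 1/2]$, $\eta \equiv 0$ on $[1, \infty)$ and $|\eta'| \leq 4$, then $\varphi := \eta \circ h$ is basic, Lipschitz, supported in $T(x, 1)$, and identically one on $T(x, 1/2)$. Define $f$ on the support of $\varphi$ by $e^{-f} = e^{-c}\varphi^2$, where $c$ enforces $(4\pi)^{-n} \int_S e^{-f} d\mu = 1$; equivalently, $c = -\log V' + n \log(4\pi)$ with $V' := \int_S \varphi^2 d\mu$. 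The remark following Lemma~\ref{minimizer} ensures that this Lipschitz basic function is an admissible competitor for $\mu^T$.

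Plugging into $\mathcal{W}^T$ with $\tau = 1$ yields
\begin{equation*}
\mathcal{W}^T(g(t), f, 1) = (4\pi)^{-n} e^{-c} \int_S \bigl( R^T \varphi^2 + 4|\nabla \varphi|^2 + (c - 2\log\varphi - 2n)\varphi^2 \bigr)\, d\mu.
\end{equation*}
The hypothesis $|R^T| \leq 1$ on $T(x, 1)$ controls the first integrand by $V'$; the estimate $|\nabla \varphi| \leq 4$ and the support bound give $\int |\nabla\varphi|^2\, d\mu \leq 16V$; the elementary bound $\sup_{s \in [0,1]} |s^2 \log s| \leq (2e)^{-1}$ handles the $-2(\log\varphi)\varphi^2$ contribution; and the $(c - 2n)\varphi^2$ piece reduces to $c - 2n$ via the normalization. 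Altogether,
\begin{equation*}
\mathcal{W}^T(g(t), f, 1) \leq -\log V' + C_1 + C_2 \frac{V}{V'}.
\end{equation*}

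The main obstacle is controlling $V'$ from below by a definite fraction of $V$; equivalently, a transverse volume comparability $\mathrm{Vol}(T(x, 1/2)) \geq c_0 \mathrm{Vol}(T(x, 1))$ under $|R^T| \leq 1$. For sufficiently small scales, Proposition~\ref{transverse v geo prop} identifies $T(x, r)$ with the geodesic tube around the Reeb torus closure $P = \overline{\mathrm{orb}_\xi(x)}$, at which point the asymptotic volume formula of Theorem~\ref{Gray Thm}, combined with the transverse curvature bound, furnishes the required doubling. With this in hand, $\mu^T(g^T(t), 1) \leq -\log V + C_3$, and pairing with $\mu_0 \leq \mu^T(g^T(t), 1)$ yields $V \geq e^{-C_3 + \mu_0} =: C$, the desired non-collapsing bound.
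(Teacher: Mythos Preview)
Your overall scaffold --- monotonicity of $\mu^{T}(g^{T}(t),1)$ plus a cut-off test function built from the basic Lipschitz function $d^{T}(x,\cdot)$ --- is exactly the paper's strategy, and your computation leading to
\[
\mathcal{W}^{T}(g(t),f,1)\;\le\;-\log V' + C_{1} + C_{2}\,\frac{V}{V'}
\]
is fine. The gap is precisely where you flag it: the inequality $\mathrm{Vol}(T(x,1/2))\ge c_{0}\,\mathrm{Vol}(T(x,1))$ at the \emph{fixed} scale $r=1$. Your proposed justification does not close this. Proposition~\ref{transverse v geo prop} only identifies $T(x,r)$ with a geodesic tube when $r\le \mathrm{inj}(S,g(t))$, and there is no control on the injectivity radius along the flow; even when the identification is available, the expansion of Theorem~\ref{Gray Thm} is asymptotic as $r\to 0$, with an $O(r^{2})$ remainder depending on $g(t)$ and on the second fundamental form of $\overline{\mathrm{orb}_{\xi}x}$. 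Neither the bound $|R^{T}|\le 1$ (which is only a transverse \emph{scalar} curvature bound, not a Ricci bound) nor Gray's formula yields a uniform doubling constant at unit scale.

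The paper does not attempt to prove doubling at scale $1$. Instead it passes to the unnormalized flow and argues by contradiction with a sequence $(p_{k},t_{k},r_{k})$; the key extra step is Lemma~\ref{radius Lemma}: for each $k$ one halves $r_{k}$ repeatedly until the ratio $\mathrm{Vol}(T(p_{k},r))/\mathrm{Vol}(T(p_{k},r/2))$ first drops below $3^{2n}$ (such a scale exists because the ratio converges to $2^{2n+1-q}$ by Theorem~\ref{Gray Thm}), and one checks that the normalized volume $r^{-2n}\mathrm{Vol}(T(p_{k},r))$ only worsens by a bounded factor under this replacement. The test function is then built at this new radius, where the required doubling holds \emph{by construction}, and the contradiction goes through. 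Proposition~\ref{non-collapse 1} is recovered from this by the rescaling $\tilde g^{T}(s)=(1-s)\,g^{T}(-\log(1-s))$. Your argument can be repaired by inserting exactly this dyadic radius-selection step in place of the unsupported appeal to uniform doubling.
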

Note that if the condition $|R^{T}| \leq1$ holds at a point $x \in S$, then it holds everywhere on $orb_{\xi}x$ as the transverse scalar curvature is constant along $\xi$.  Thus, the conditions in Proposition~\ref{non-collapse 1} are local.  That is, after fixing preferred local coordinates, it suffices to check the condition on a single fibre $\{x=const\}$.  The proof follows from the following useful Proposition.

\begin{prop}\label{non-collapse 2}
Let $g^{T}(t)$ be a solution of the unnormalized Sasaki-Ricci flow $(d/dt) g(t) = -Ric^{T}(g(t))$.  There is a constant $\kappa = \kappa(g(0)) >0$ so that if,  $ |R^{T}(g(t))| \leq 1/r^{2} $ in a tube $T_{g(t)}(p,r)$ around a point $p$ with $\overline{orb_{\xi}p}$ a torus of dimension $q$ then $Vol_{g_{(t)}}(T_{g(t)}(p,r))>\kappa r^{2n}.$
\end{prop}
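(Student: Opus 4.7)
The plan is to adapt Perelman's non-collapsing argument to the Sasaki setting, using the monotonicity of $\mu^T$ established in \S 5 and the fact that $d^T(p,\cdot)$ is a basic Lipschitz function (the lemma in \S 2), so that smooth cutoffs of it serve as admissible basic test functions for $\mathcal{W}^T$.

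First I would establish a uniform lower bound $\mu^T(g(t), r^2) \geq \mu_0(g(0))$. Since the unnormalized transverse Ricci flow lives on a bounded time interval (via the standard rescaling between the normalized and unnormalized flows), choosing $\tau(s) = r^2 + t - s$ gives $d\tau/ds = -1$, so the monotonicity of $\mu^T$ yields
\[
\mu^T(g(t), r^2) \;\geq\; \mu^T(g(0), r^2 + t).
\]
As $r$ is bounded above by the diameter of $S$ and $t$ by the lifespan of the unnormalized flow, the argument $r^2 + t$ lies in a bounded subset of $\mathbb{R}_{>0}$; continuity of $\mu^T(g(0), \cdot)$ then gives the desired uniform lower bound.

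Supposing for contradiction that $V := \text{Vol}_{g(t)}(T_{g(t)}(p, r)) < \kappa r^{2n}$ for some small $\kappa$ to be chosen, take a smooth cutoff $\chi: \mathbb{R} \to [0,1]$ with $\chi \equiv 1$ on $[0, 1/2]$ and $\text{supp}\,\chi \subset [0,1]$, and set $\phi(x) = \chi(d^T(p, x)/r)$. Then $\phi$ is basic and Lipschitz, supported in $T(p,r)$, identically $1$ on $T(p, r/2)$, with $|\nabla \phi| \leq C/r$ almost everywhere. Define $f$ by $e^{-f/2} = c\phi$ with $c^2 \int \phi^2 d\mu = (4\pi r^2)^n$; such $W^{1,2}_B$ test functions are admissible by the remark after Lemma 5.3. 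Writing out $\mathcal{W}^T(g, f, r^2)$, the curvature hypothesis $|R^T| \leq r^{-2}$ on $\text{supp}\,\phi$ makes the scalar-curvature term $O(1)$; the gradient term reduces (via $|\nabla f|^2 e^{-f} = 4 c^2 |\nabla \phi|^2$) to $O(V / \int \phi^2 d\mu)$; and the entropy term evaluates, using $|\phi^2 \log \phi| \leq 1/(2e)$, to
\[
\log\!\left(\frac{\int \phi^2 d\mu}{(4\pi)^n r^{2n}}\right) + O\!\left(\frac{V}{\int \phi^2 d\mu}\right) - 2n.
\]
By Theorem 2.11 applied to $P = \overline{orb_\xi p}$, for small $r$ the ratio $V / \text{Vol}(T(p, r/2))$ is bounded by a constant depending only on the torus dimension $q$, and since $\int \phi^2 d\mu \geq \text{Vol}(T(p, r/2))$, $V / \int \phi^2 d\mu$ is uniformly bounded. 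The hypothesis $\int \phi^2 d\mu \leq V < \kappa r^{2n}$ then gives $\mathcal{W}^T(g, f, r^2) \leq \log \kappa + C'$; choosing $\kappa$ small enough that $\log \kappa + C' < \mu_0$ contradicts the lower bound for $\mu^T$ and forces $V \geq \kappa r^{2n}$.

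The main obstacle is the uniform control of $V/\text{Vol}(T(p, r/2))$. Theorem 2.11 delivers this asymptotically as $r \to 0$, and since the torus dimension $q$ takes only finitely many values in $\{1, \ldots, n+1\}$, one obtains a constant independent of $p$; for $r$ bounded away from $0$ the conclusion is trivial by compactness of $S$. A secondary subtlety is the mere Lipschitz regularity of $d^T(p,\cdot)$, which is precisely accommodated by the $W^{1,2}_B(S)$ variational characterization of $\mu^T$ from the remark following Lemma 5.3.
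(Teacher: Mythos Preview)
Your overall strategy is exactly Perelman's, transplanted to the transverse setting, and the ingredients you single out (the basic Lipschitz distance $d^{T}$, the $W^{1,2}_{B}$ characterization of $\mu^{T}$, the monotonicity lower bound on $\mu^{T}$) are the right ones. The gap is in your control of the ratio $V/\mathrm{Vol}(T(p,r/2))$. You appeal to Theorem~\ref{Gray Thm} ``for small $r$'' and to ``compactness of $S$'' for $r$ bounded away from $0$, but neither of these is uniform in $t$: the asymptotic expansion requires $r\leq inj(S,g(t))$ (via Proposition~\ref{transverse v geo prop}) and its $O(r^{2})$ remainder depends on the curvature of $g(t)$, while compactness of $(S,g(0))$ says nothing about volumes in $g(t)$ once $t$ approaches the singular time. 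So your dichotomy ``small $r$'' versus ``$r$ bounded away from $0$'' cannot be made independent of $t$, and without a uniform ratio bound the gradient and entropy terms in $\mathcal{W}^{T}$ are not controlled.

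The paper's remedy is Lemma~\ref{radius Lemma}: for each \emph{fixed} $(t,p,r)$ one uses Gray's expansion only to know that the dyadic ratio $\mathrm{Vol}(T(p,r/2^{k}))/\mathrm{Vol}(T(p,r/2^{k+1}))$ tends to $2^{2n+1-q}<3^{2n}$ as $k\to\infty$, hence there is a \emph{first} $k$ at which the ratio drops below $3^{2n}$. At that scale $r'=r/2^{k}$ the ratio is bounded by the universal constant $3^{2n}$, and iterating the reverse inequality for the earlier scales gives $(r')^{-2n}\mathrm{Vol}(T(p,r'))\leq 3^{2n}r^{-2n}\mathrm{Vol}(T(p,r))$, so the collapsing hypothesis is preserved. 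One then runs your test-function argument at radius $r'$ instead of $r$. This radius-selection step is the Sasaki analogue of the standard trick in Perelman's argument (compare Kleiner--Lott), with Gray's tube expansion replacing the Euclidean ball asymptotics; it is precisely what converts the \emph{pointwise} information in Theorem~\ref{Gray Thm} into the \emph{uniform} doubling bound you need.
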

\begin{proof}
We argue by contradiction.  Suppose there exists sequence of points $p_{k}\in S$, times $t_{k}\rightarrow T$, and radii $r_{k}$ so that $\dim \overline{orb_{\xi}p_{k}}=q$, and
\begin{equation}\label{collapse cond}
|R_{k}^{T}|=|R_{g(t_{k})}^{T}| \leq \frac{C}{r_{k}^{2}}, \text{ but }  \text{  } r_{k}^{-2n}Vol_{g_{(t_{k})}}(T_{g(s_{k})}(p_{k},r_{k})) = r_{k}^{-2n}Vol(T_{k}) \rightarrow 0
\end{equation}

\begin{Lemma}\label{radius Lemma}
Fix $p \in S$ and $t \in [0,T)$, and suppose $\exists r > 0 $ such that $|R^{T}|=|R_{g(t)}^{T}| \leq \frac{C}{r^{2}}$ on $T(p,r)$.  Then there exists $r' \in (0,r]$ such that
\begin{enumerate}
\item[{\it (i)}] $|R^{T}|=|R_{g(t)}^{T}| \leq \frac{C}{r'^{2}}$ on $T(p,r')$
\item[{\it (ii)}] $(r')^{-2n}Vol(T(p,r'))\leq 3^{2n}r^{-2n}Vol(T(p,r)) $
\item[{\it (iii)}] $Vol_{g(t)}(T_{g(t)}(p,r')) - Vol_{g(t)}(T_{g(t)}(p,r'/2))\leq C(n,q)$.
\end{enumerate}
\end{Lemma}
\begin{proof}
By Proposition~\ref{transverse v geo prop} and the volume expansion in Theorem~\ref{Gray Thm}, we know that 
\begin{equation*}
\lim_{k\rightarrow \infty} \frac{Vol_{g(t)}(T_{g(t)}(p,r/2^{k})) }{Vol_{g_{(t)}}(T_{g(t)}(p,r/2^{k+1}))} = 2^{2n+1-q}
\end{equation*}
Hence, there is a $k < \infty$ such that 
\begin{enumerate}
\item[\textbullet] 
\begin{equation*}
\frac{Vol_{g(t)}(T_{g(t)}(p,r/2^{k}))}{Vol_{g(t)}(T_{g(t)}(p,r/2^{k+1}))}\leq 3^{2n}
\end{equation*}
\item[\textbullet] if $l < k$, then
\begin{equation}\label{iterate}
 \frac{Vol_{g(t)}(T_{g(t)}(p,r/2^{l})) }{Vol_{g(t)}(T_{g(t)}(p,r/2^{l+1}))}>3^{2n}
 \end{equation}
 \end{enumerate}
By iterating the inequality in~(\ref{iterate}) we obtain the result with $r'=r/2^{k+1}$.
\end{proof} 

By Lemma~\ref{radius Lemma}, we may assume that $\{r_{k}\}$ is chosen so that property ({\it iii}) holds.  Let $\psi \in C^{\infty}(\mathbb{R})$ be such that $\psi$ is $1$ on $[0,1/2]$, decreasing on $[1/2,1]$ and $0$ on $[1,\infty)$.  Define functions $\{u_{k}\}$ by
\begin{equation}\label{non collapse test function}
u_{k} = e^{C_{k}}\psi \left( r_{k}^{-1}d^{T}(x,p_{k})\right)
\end{equation}
where $C_{k}$ is chosen so that
\begin{equation*}
(4\pi)^{n} = r_{k}^{-2n}\int_{T_{k}}u_{k}^{2}d\mu \leq 
e^{2C_{k}}r_{k}^{-2n} Vol(T_{k})
\end{equation*}
By assumption $r_{k}^{-2n}Vol(T_{k}) \rightarrow 0$, and so we necessarily have $C_{k} \rightarrow \infty$.  It is easy to see that the function $u_{k}$ defined in~(\ref{non collapse test function}) is basic, and $u_{k} \in W^{1,2}_{B}$.
We now apply the same argument as Perelman, using instead the transverse entropy functional, see \cite{KleinerLott},\cite{Pman}, \cite{SesumTian}. Proposition~\ref{non-collapse 2} is proved.  Proposition~\ref{non-collapse 1} follows easily by rescaling.  It is standard to check that if $g^{T}(t)$ is a solution of the Sasaki-Ricci flow, then $\tilde{g}^{T}(s)=(1-s)g^{T}(t(s))$ is a solution of the unnormalized Sasaki-Ricci flow for $t(s)=-\ln(1-s)$.  It is easy to see that the curvature assumption in Proposition~\ref{non-collapse 1} implies the curvature assumption in Proposition~\ref{non-collapse 2} at radius $\sqrt{1-s}$; it follows that
\begin{equation}
\kappa(1-s)^{n}\leq Vol_{\tilde{g}(s)}(T_{\tilde{g}(s)}(p, \sqrt{1-s})) = (1-s)^{n}Vol_{g(t(s))}(T_{g(t(s))}(p,1)
\end{equation}
which shows that Proposition ~\ref{non-collapse 1} follows from Proposition~\ref{non-collapse 2}.
\end{proof}
\begin{remark}
One might hope that in Proposition~\ref{non-collapse 2} the exponent $2n$ could be replaced with the exponent $2n+1-q$ which is optimal in light of Theorem~\ref{Gray Thm}.  However, the scaling argument  above, along with the fact that the normalized Sasaki-Ricci flow preserves the volume of $S$, shows that this is not possible unless $q=1$
\end{remark}

We now employ these non-collapsing results to obtain uniform transverse diameter bounds along the Sasaki-Ricci flow, which in light of Lemma~\ref{bounds by Cd} will prove Theorem~\ref{Pman thm}.
 
\begin{prop}\label{uniform diameter bounds}
There is a uniform constant $C$ such that $diam^{T}(S,g^{T}(t)) \leq C$.
\end{prop}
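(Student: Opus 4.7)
The plan is to argue by contradiction. Suppose along a sequence of times $t_k$ one has $D_k := \mathrm{diam}^T(S, g^T(t_k)) \to \infty$. The strategy, following Perelman's approach for the K\"ahler-Ricci flow \cite{Pman, SesumTian}, is to combine the non-collapsing estimate of Proposition~\ref{non-collapse 1} with the distance-controlled bounds of Lemma~\ref{bounds by Cd} to derive a contradiction with the fixed total volume of $S$.

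I first anchor the geometry at a minimizer $x_k$ of $u(\cdot, t_k)$. The normalization $\int_S e^{-u(t_k)}d\mu = (4\pi)^n$ combined with the uniform lower bound on $u$ from Lemma~\ref{uniform bdd below} forces $u(x_k, t_k) \leq K$ uniformly in $k$, since otherwise $e^{-u}$ would be uniformly small and the integral could not remain fixed. Proposition~\ref{bounds by Cu} then gives $|R^T(x_k, t_k)| \leq C$, and combining with the quadratic distance bound from Lemma~\ref{bounds by Cd}, one obtains $|R^T| \leq 1/r_0^2$ on a fixed small transverse tube $T(x_k, r_0)$. Proposition~\ref{non-collapse 1}, via its rescaled form Proposition~\ref{non-collapse 2}, then yields $\mathrm{Vol}(T(x_k, r_0)) \geq v_0 > 0$ uniformly in $k$.

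Next I pick $y_k$ with $d^T_{t_k}(x_k, y_k) = D_k$. Along a transverse minimizing path from $x_k$ to $y_k$ — realized on short segments as a geodesic meeting the orbit closures orthogonally via Proposition~\ref{transverse v geo prop} — I select intermediate points $\{z_j\}$ at increasing transverse distances from $x_k$. At each $z_j$ the quadratic curvature bound $R^T(z_j, t_k) \leq C d^T(z_j, x_k)^2 + C$ of Lemma~\ref{bounds by Cd} allows non-collapsing to be applied at a scale $r_j$ inversely proportional to $d^T(z_j, x_k)$, yielding a non-collapsed transverse tube $T(z_j, r_j)$ of volume at least $\kappa r_j^{2n}$.

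The hard part is producing enough disjoint non-collapsed volume to exceed $\mathrm{Vol}(S)$: a naive geometric-series choice of the $r_j$ gives a convergent total $\sum r_j^{2n}$, which is insufficient to contradict the fixed volume. The cleaner route is Perelman's argument built directly on the transverse entropy functional, using the existence of a minimizer $f_{\tau}$ of $\mathcal{W}^T(g, \cdot, \tau)$ at a scale $\tau_k$ comparable to $D_k^2$ supplied by Lemma~\ref{minimizer}. Combining the monotonicity of $\mu^T$ along the Sasaki-Ricci flow with Gaussian-type concentration estimates for such minimizers — tested against a cut-off of the transverse tube produced in the anchor step — then produces the desired contradiction. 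Throughout, the irregular-orbit pathology is dealt with by replacing geodesic balls with transverse tubes and appealing to Proposition~\ref{transverse v geo prop} together with the tube volume asymptotics of Theorem~\ref{Gray Thm}.
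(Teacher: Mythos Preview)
Your setup is right up through anchoring at the minimizer $x_k$ of $u(\cdot,t_k)$ and invoking non-collapsing there; you also correctly diagnose that packing disjoint tubes along a path gives only a convergent series and cannot exceed $\mathrm{Vol}(S)$. The gap is in what you propose as the replacement. Taking $\tau_k\sim D_k^2$ and appealing to ``Gaussian-type concentration estimates for the minimizer $f_{\tau_k}$'' is not the actual mechanism, and as stated it is not a proof: Lemma~\ref{minimizer} only supplies existence of a minimizer, and no concentration estimate for it is established (or easily available) in this transverse setting. In particular, monotonicity of $\mu^{T}$ together with the mere existence of a minimizer does not by itself produce a contradiction when the diameter blows up.

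The argument the paper (and Perelman, via \cite{SesumTian}) runs is different and uses $\tau=1$ throughout. The three ingredients you are missing are: (a) an \emph{annular} decomposition: using non-collapsing and the fixed total volume to locate dyadic annuli $T(k_{1},k_{2})=\{2^{k_{1}}\leq d^{T}_{t}(x_k,\cdot)\leq 2^{k_{2}}\}$ of arbitrarily small volume $V<\epsilon$ with the doubling control $V\leq 2^{10n}\mathrm{Vol}(T(k_{1}+2,k_{2}-2))$; (b) a bound on $\int_{T(r_1,r_2)} R^{T}$ by $CV$, obtained by writing $R^{T}-n=-\Delta_{B}u$, applying the coarea formula to choose radii $r_1,r_2$ with small slice measure, and estimating the boundary flux by $|\nabla u|\leq C(d^{T}+1)$ from Lemma~\ref{bounds by Cd}; and (c) plugging the explicit basic test function $w=e^{C_i}\phi(d^{T}_{t_i}(x_k,\cdot))$, supported in this annulus, into $\mathcal{W}^{T}(g(t_i),\cdot,1)$. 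The normalization forces $C_i\to\infty$ because $V\to 0$, while (a) and (b) keep the gradient and $R^{T}$ terms uniformly bounded, so $\mathcal{W}^{T}\to -\infty$, contradicting the lower bound $\mu^{T}(g(t_i),1)\geq \mu^{T}(g(0),1)$. Your proposal should replace the vague minimizer/large-$\tau$ step with exactly this annulus-plus-coarea argument.
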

The proof is essentially identical to Perelman's, using the same adaptations as before.  We argue by contradiction.  Assume that the diameters are unbounded in time.  Denote by $d^{T}_{t}(z) = d^{T}_{t}(x,z)$ where $u(x,t)= \min_{y\in S} u(y,t)$.  Proposition~\ref{uniform diameter bounds} is then a consequence of the following two lemmas.  
\begin{Lemma}\label{ST claim 3.2}
For every $\epsilon>0$,  we can find $T(k_{1},k_{2})$, $k_{1} < k_{2}$, such that if $diam^{T}(S, g(t))$ is sufficiently large, then
\begin{enumerate}
\item[({\it i})] $Vol(T(k_{1},k_{2})) <\epsilon$
\item[({\it ii})] $Vol(T(k_{1},k_{2})) \leq 2^{10n} Vol(T(k_{1}+2, T_{2}-2))$
\end{enumerate}
\end{Lemma}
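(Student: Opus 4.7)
The statement is a combinatorial/pigeonhole lemma. Along the Sasaki-Ricci flow the total volume $V_S := Vol(S,g(t))$ is constant, and if $v_k := Vol(T(k,k+1))$ denotes the shell volumes then $\sum_k v_k = V_S$. The intuition is that when the diameter $D = diam^T(S,g(t))$ is very large compared to $V_S$, most shells must carry small volume, so there is room to find a thin annulus whose mass is not concentrated at its boundary.

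First I would fix a threshold $\epsilon' = \epsilon/(10\cdot 2^{10n})$ and call a shell \emph{heavy} if $v_k \geq \epsilon'$; there are at most $V_S/\epsilon'$ heavy shells. Pigeonhole on the gaps between consecutive heavy shells, using that $D\gg V_S/\epsilon'$, produces a run of $L$ consecutive \emph{light} shells (each with $v_k<\epsilon'$), where $L=L(n)$ is any prescribed dimensional constant. Within this run, let $u_1,\ldots,u_L$ be the shell volumes and $V_R=\sum_j u_j$. I would consider the $L-4$ width-$5$ sub-blocks $B_i=(u_i,u_{i+1},u_{i+2},u_{i+3},u_{i+4})$, for which $\sum_i Vol(B_i)\leq 5V_R$ and $\sum_i u_{i+2}=V_R-(u_1+u_2+u_{L-1}+u_L)$. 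If every $B_i$ violated condition (ii), summing $Vol(B_i)>2^{10n}u_{i+2}$ over $i$ gives $5V_R>2^{10n}(V_R-4\epsilon')$, forcing $V_R<C_n\epsilon'$ for a small dimensional constant $C_n\leq 5$.

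This dichotomy yields the desired $(k_1,k_2)$. In the good case, some width-$5$ sub-block $B_i$ satisfies (ii), and taking $(k_1,k_2)$ to be its endpoints also gives (i) since $Vol(B_i)\leq 5\epsilon'<\epsilon$. In the bad case $V_R<5\epsilon'\ll\epsilon$ and the averaging argument forces essentially all of $V_R$ onto the four outermost shells of the run; taking $(k_1,k_2)$ to be the endpoints of the whole run makes (i) immediate, and if (ii) still fails because of this concentration I would shrink the annulus by two shells on each side and re-run the same dichotomy on the smaller block, noting that each failed step reduces the residual volume by a factor of at least $2^{10n}$. Since each shrink decreases the block width by $4$, the procedure terminates in at most $\lceil L/4\rceil$ steps in an annulus satisfying (ii), possibly because the block has become empty (in which case (ii) is the trivial inequality $0\leq 2^{10n}\cdot 0$).

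The main obstacle is the last step: one must choose $L=L(n)$ large enough that the iteration completes while the block width is still $\geq 5$ (so that $T(k_1+2,k_2-2)$ is genuinely non-empty), and verify that the geometric decay of residual volume at each failed shrink is enough to force termination either at a good annulus or at an empty one. The generous factor $2^{10n}$ in (ii) is chosen precisely to absorb the $4\epsilon'$ edge correction in the averaging inequality and the geometric-series losses in the iteration.
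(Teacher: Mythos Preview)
Your argument has a genuine gap. You try to run a purely combinatorial/pigeonhole scheme and never invoke the non-collapsing theorem; but the paper (following Sesum--Tian) explicitly notes that non-collapsing is \emph{crucial} to this lemma, and indeed your iteration cannot close without it.

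The failure is in the ``bad case'' termination. Consider a run of $L$ light shells with volumes $u_j = c\,2^{-10nj}$ (take $c$ small enough that each $u_j<\epsilon'$). For every width-$5$ sub-block $B_i$ one has $Vol(B_i)\ge u_i = 2^{20n}u_{i+2}>2^{10n}u_{i+2}$, so \emph{all} of them violate (ii). Likewise, for the whole run and for every shrink level $(k_1+2j,k_2-2j)$ the outermost surviving shell dominates and the ratio to the inner block is again $\ge 2^{20n}$, so (ii) fails at every step. Your procedure then runs all the way down to a block of width $\le 4$, and your proposed escape (``the block has become empty, so (ii) is $0\le 2^{10n}\cdot 0$'') is not valid: when $T(k_1'+2,k_2'-2)=\emptyset$, condition (ii) reads $Vol(T(k_1',k_2'))\le 0$, which requires the \emph{outer} block to have zero volume. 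The iteration only gives $Vol(T(k_1',k_2'))<2^{-10nm}V_R$, which is positive. Choosing $L=L(n)$ larger does not help; the adversarial geometric decay is scale-free in $L$.

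What the paper (via Sesum--Tian) uses to break this is precisely the non-collapsing Proposition combined with the curvature estimate $R^{T}(y)\le C\,d^{T}(x,y)^{2}+C$ of Lemma~\ref{bounds by Cd}: a point $p$ in the shell at dyadic scale $2^{k}$ has $|R^{T}|\le C\,2^{2k}$, so $Vol(T(p,c\,2^{-k}))\ge \kappa\,2^{-2nk}$ and hence the $k$-th shell has volume $\gtrsim 2^{-2nk}$. This lower bound is incompatible with the geometric decay $2^{-10nj}$ persisting over many shells: it forces the shrinking iteration $Vol(T(k_1+2j,k_2-2j))<2^{-10nj}\epsilon$ to stop (i.e.\ (ii) to hold) after boundedly many steps, while the block is still wide. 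Once you insert that lower bound, your shrink-and-repeat scheme becomes exactly the Sesum--Tian argument; without it the lemma is not a purely combinatorial fact about sequences of shell volumes.
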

The proof of the the lemma is identical to the proof given in \cite{SesumTian}, and thus we omit it.  We note, that the non-collapsing theorem is crucial to the proof.

\begin{Lemma}\label{ST Lemma 3.3}
Let $k_{1}, k_{2}$ be as in Lemma~\ref{ST claim 3.2}.  Then there exists $r_{1}, r_{2}$ and a constant $C$ independent of time such that $2^{k_{1}}\leq r_{1} \leq 2^{k_{1}+1}$, $2^{k_{2}-1} \leq r_{2} \leq 2^{k_{2}}$, and
\begin{equation*}
\int_{T(r_{1}, r_{2})} R^{T} \leq C Vol(T(k_{1}, k_{2})) = CV
\end{equation*}
\end{Lemma}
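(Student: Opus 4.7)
The plan is to exploit the transverse Ricci potential identity and reduce the integral of $R^{T}$ over $T(r_1,r_2)$ to a volume term plus boundary fluxes, then pick good radii by a coarea/pigeonhole argument. Tracing the relation $R^{T}_{\bar k j} = g^{T}_{\bar k j} - \partial_j \partial_{\bar k} u$ gives $R^{T} = n - \Delta_B u$, so
$$\int_{T(r_1,r_2)} R^{T}\, d\mu \;=\; n\, Vol(T(r_1,r_2)) \;-\; \int_{T(r_1,r_2)} \Delta_B u\, d\mu.$$
Since $T(r_1,r_2) \subset T(k_1,k_2)$, the first term is at most $nV$, and the task reduces to controlling the Laplacian integral.

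Next, I would convert the Laplacian integral to a boundary flux. Although $d^{T}$ is only Lipschitz (and its level sets need not be smooth submanifolds in the irregular case), one can approximate $\mathbf{1}_{T(r_1,r_2)}$ by smooth cutoffs $\chi_\varepsilon \circ d^{T}$, integrate by parts against $u$, and pass to the limit along those $r_1, r_2$ at which Federer's coarea formula for the $1$-Lipschitz function $d^{T}$ (which has $|\nabla d^{T}| \leq 1$ a.e.\ by Rademacher) produces classical boundary flux terms. For such a.e.\ pair of radii,
$$\int_{T(r_1,r_2)} \Delta_B u\, d\mu \;=\; \int_{\{d^{T}=r_2\}} \langle\nabla u,\nabla d^{T}\rangle\, d\mathcal{H}^{2n} \;-\; \int_{\{d^{T}=r_1\}} \langle\nabla u,\nabla d^{T}\rangle\, d\mathcal{H}^{2n}.$$
Applying coarea on the dyadic intervals $[2^{k_1}, 2^{k_1+1}]$ and $[2^{k_2-1}, 2^{k_2}]$, each of which has integrated level-set area $\leq V$, the pigeonhole principle then yields admissible $r_1, r_2$ with $\mathcal{H}^{2n}(\{d^{T} = r_i\}) \leq 2V / 2^{k_i}$.

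Finally, I would combine this with the gradient estimate $|\nabla u| \leq C d^{T} + C$, obtained by pairing $|\nabla u|^2 \leq C(u+C)$ from Proposition~\ref{bounds by Cu} with the quadratic growth $u \leq C(d^{T})^2 + C$ from Lemma~\ref{bounds by Cd}. On the level set $\{d^{T} = r_i\}$ this gives $|\nabla u| \leq C r_i + C \leq C\, 2^{k_i}$, and hence
$$\left|\int_{\{d^{T}=r_i\}} \langle \nabla u, \nabla d^{T}\rangle\, d\mathcal{H}^{2n}\right| \;\leq\; C\, 2^{k_i} \cdot \frac{2V}{2^{k_i}} \;=\; CV.$$
Summing the two boundary fluxes with the volume term gives $\int_{T(r_1,r_2)} R^{T} \leq CV$ as required. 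The main obstacle is making the divergence theorem and coarea step fully rigorous in the irregular setting where $d^{T}$ is merely Lipschitz and the tube boundaries are not smooth submanifolds; this is handled by smooth-cutoff approximation and by restricting to the full-measure set of radii in each dyadic interval along which the classical boundary formula holds.
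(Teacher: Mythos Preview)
Your proposal is correct and follows essentially the same approach as the paper: write $R^{T}=n-\Delta_{B}u$, integrate by parts to reduce to boundary fluxes on $\{d^{T}=r_{i}\}$, choose $r_{1},r_{2}$ in the dyadic intervals by a pigeonhole/coarea argument so that the level-set areas are at most $2V/2^{k_{i}}$, and then feed in the linear gradient bound $|\nabla u|\le C(d^{T}+1)$ obtained by combining Proposition~\ref{bounds by Cu} with Lemma~\ref{bounds by Cd}. If anything, you are more scrupulous than the paper about the fact that $d^{T}$ is only Lipschitz in the irregular case, handling the divergence-theorem step via smooth cutoffs and restricting to the full-measure set of radii where the coarea formula applies; the paper simply writes down the resulting inequality.
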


\begin{proof}
By the coarea formula we have
\begin{equation*}
Vol(T(r)) = \int_{\{0\leq d^{T}\leq r\}} 1 d\mu \geq \int_{\{0\leq d^{T}\leq r\}} |\nabla d^{T}| d\mu = \int_{0}^{r} \mathcal{H}^{2n}\{d^{T}(x)=t\} dt.
\end{equation*}
In analogy with the Euclidean case, denote $S(t) = \mathcal{H}^{2n}\{d^{T}(x)=t\}$. 
\begin{step}
$\exists r_{1} \in [2^{k_{1}}, 2^{k_{1}+1}]$ such that $$S(r_{1}) \leq  2 \frac{V}{2^{k_{1}}}$$
\end{step}
If not, then
\begin{equation*}
Vol(T(k_{1}, k_{1}+1) \geq \int_{2^{k_{1}}}^{2^{k_{1}+1}}S(t)dt > 2V = 2Vol(T(k_{1}, k_{2}))
\end{equation*}
which is not possible as $k_{2} >>k_{1}$.  In a similar fashion we obtain
\begin{step}
$\exists r_{2} \in [2^{k_{2}-1}, 2^{k_{2}}]$ such that $$S(r_{2}) \leq  2 \frac{V}{2^{k_{2}}}$$
\end{step}
Now we have
\begin{equation*}
\begin{aligned}
\int_{T(r_{1},r_{2})} R^{T} &= \int_{T(r_{1},r_{2})} (R^{T}-n) + nVol(T(r_{1},r_{2})) \\&= -\int_{T(r_{1},r_{2})} \Delta u+ nVol(T(r_{1},r_{2})) \\
&\leq \int_{\{d^{T} = r_{1}\}} |\nabla u| + \int_{\{d^{T} = r_{2}\}} |\nabla u| + nV \\
&\leq C'(2^{k_{1} + 1}+1)2 \frac{V}{2^{k_{1}}} + C'  (2^{k_{2}} + 1) 2 \frac{V}{2^{k_{2}}} +nV \leq C V
\end{aligned}
\end{equation*}
where we have used the estimates in Lemma~\ref{bounds by Cd} in the last line.  Clearly $C$ is independent of $t$.

\end{proof}
\begin{proof}[Proof of Proposition~\ref{uniform diameter bounds}]
Suppose that the diameter is not uniformly bounded. That is, there exists a sequence of times $t_{i} \rightarrow \infty$ such that $diam^{T}(S, g(t_{i}))\rightarrow \infty$.  Let $\epsilon_{i}$ be a sequence of positive real numbers with $\epsilon_{i}\rightarrow 0$.  By Lemmas~\ref{ST claim 3.2} and~\ref{ST Lemma 3.3}, we can find sequences $k_{1}^{i}$, and $k_{2}^{i}$, such that
\begin{equation*}
Vol_{t_{i}}(T_{t_{i}}(k_{1}^{i},k_{2}^{i})) < \epsilon_{i}
\end{equation*}
\begin{equation*}
Vol_{t_{i}}(T_{t_{i}}(k_{1}^{i},k_{2}^{i})) \leq 2^{10n} Vol_{t_{i}}(T_{t_{i}}(k_{1}^{i}+2,k_{2}^{i}-2))
\end{equation*}
For each $i$, find $r_{1}^{i}$ and $r_{2}^{i}$ as in Lemma~\ref{ST Lemma 3.3}.  Let $\phi_{i}$ be a sequence of cutoff functions such that $\phi(z) =1$ on $[2^{k_{1}^{i}+2},2^{k_{2}^{i}-2}]$, and $\phi =0$ on $(-\infty, r_{1}^{i}] \cup [r_{2}^{i}, \infty)$.  Let $u_{i} = e^{C_{i}} \phi_{i}(d^{T}_{t_{i}}(x,p_{i}))$ such that $\int_{S}u_{i}^{2} =(4\pi)^{n}$. We have
\begin{equation*}
(4\pi)^{n} = e^{2C_{i}} \int_{S}\phi_{i}^{2} \leq e^{2C_{i}}\epsilon_{i}.
\end{equation*}
Since $\epsilon_{i} \rightarrow 0$, we have $C_{i} \rightarrow \infty$.  We plug the functions $u_{i}$ into the $\mathcal{W}^{T}$ functional, and follow the proof in \cite{SesumTian}.
\end{proof}

Combining Lemma~\ref{bounds by Cd}, and Proposition~\ref{uniform diameter bounds}, we have proved Theorem~\ref{Pman thm}.

\section{Appendix}

In the appendix we include the proofs of those facts requiring the use of transverse Sobolev spaces, which we felt were outside the theme of the main body of the paper.  In particular, we first prove:
\begin{Lemma}\label{poincare ineq}
Let $u$ satisfy the equation $g^{T}_{\bar{k}j}-Ric^{T}_{\bar{k}j} = \partial_{j}\partial_{\bar{k}}u$.  Then the following inequality
\begin{equation}
\frac{1}{Vol(S)}\int_{S}f^{2}e^{-u}d\mu \leq \frac{1}{Vol(S)}\int_{S}|\nabla f|^{2}e^{-u}d\mu +\left(\frac{1}{Vol(S)}\int_{S}f e^{-u} d\mu\right)^{2}
\end{equation}
holds for all $f\in C^{\infty}_{B}(S)$.
\end{Lemma}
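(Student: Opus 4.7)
The inequality is a weighted Poincar\'e estimate, and my plan is to interpret it as a spectral gap statement for the drift Laplacian
\begin{equation*}
\Delta_u f := \Delta_B f - g^T(\nabla u, \nabla f).
\end{equation*}
Integration by parts against the measure $e^{-u}d\mu$ shows that $\int_S f(-\Delta_u f)\,e^{-u}d\mu = \int_S |\nabla f|^2 e^{-u}d\mu$, so $-\Delta_u$ is symmetric and nonnegative on $L^2_B(S, e^{-u}d\mu)$. In these terms, the inequality amounts to the assertion that every nonzero eigenvalue $\lambda$ of $-\Delta_u$ on basic functions satisfies $\lambda \geq 1$.

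The first step is to develop the spectral theory of $-\Delta_u$ on the transverse Sobolev space $W^{1,2}_B(S)$ (whose definition is given elsewhere in the appendix). Because $S$ is compact and $e^{-u}$ is smooth and positive, a standard variational argument --- entirely parallel to the one used in the proof of Lemma~\ref{minimizer} --- produces an orthonormal basis $\{\phi_k\}_{k\geq 0}$ of smooth basic eigenfunctions with eigenvalues $0 = \lambda_0 < \lambda_1 \leq \lambda_2 \leq \cdots \to \infty$, $\phi_0$ proportional to the constants. Expanding $f = \sum_k c_k \phi_k$ and using $\langle \phi_j, \phi_k\rangle_u = \delta_{jk}$, the three integrals appearing in the statement become explicit quadratic expressions in the coefficients $c_k$, reducing the inequality algebraically to the bound $\lambda_k \geq 1$ for all $k \geq 1$.

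The heart of the proof is therefore the spectral gap $\lambda_1 \geq 1$, which I would establish by a weighted transverse K\"ahler Bochner identity. Given a basic eigenfunction $\phi$ with $-\Delta_u\phi = \lambda\phi$ and $\int\phi e^{-u}d\mu = 0$, consider the manifestly nonnegative quantity
\begin{equation*}
I := \int_S \bigl|\bar\partial_B(\nabla^{1,0}\phi)\bigr|^2 e^{-u}d\mu,
\end{equation*}
which measures the failure of $\nabla^{1,0}\phi$ to be a transverse holomorphic vector field. Working in the preferred local coordinates of \S 3, I would expand $I$ by integrating by parts against the weighted measure and commuting covariant derivatives via the transverse curvature. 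The Ricci terms produced by the curvature commutation combine with the Hessian of $u$ precisely through the soliton-type equation $g^T_{\bar k j} - Ric^T_{\bar k j} = \partial_j \partial_{\bar k}u$, and after the cancellations collapse one arrives at the identity
\begin{equation*}
I = (\lambda - 1)\int_S |\nabla\phi|^2 e^{-u}d\mu.
\end{equation*}
Since $I \geq 0$ and $\int|\nabla\phi|^2 e^{-u}d\mu = \lambda\int\phi^2 e^{-u}d\mu > 0$, this forces $\lambda \geq 1$.

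The main technical obstacle is not the Bochner computation itself, which in the preferred local coordinates reduces to the classical weighted K\"ahler Bochner identity for a gradient K\"ahler--Ricci soliton on each patch $V_\alpha \subset \mathbb{C}^n$. Rather, the delicate point is the rigorous spectral theory and integration-by-parts for $-\Delta_u$ on basic functions in the irregular setting, where no global K\"ahler quotient exists. This is handled by working throughout in the transverse Sobolev framework $W^{1,2}_B(S)$ and appealing to Rellich-type compactness for basic functions on the compact manifold $S$; all tensors involved being basic, the patchwise Bochner identities glue to the global identity displayed above.
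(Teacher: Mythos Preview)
Your proposal is correct and follows essentially the same route as the paper's proof: reduce the inequality to the spectral gap $\lambda_1\geq 1$ for the weighted drift Laplacian, set up the spectral theory via the basic Sobolev spaces and Rellich compactness (the paper cites Kamber--Tondeur for this), and then derive $\lambda\geq 1$ from a weighted Bochner identity in which the curvature terms are absorbed by the soliton equation $g^T_{\bar k j}-Ric^T_{\bar k j}=\partial_j\partial_{\bar k}u$. The only cosmetic difference is that the paper works with the complexified operator $Lf=-(g^T)^{j\bar k}\nabla_j\nabla_{\bar k}f+(g^T)^{j\bar k}\nabla_{\bar k}f\,\nabla_j u$ on $C^\infty_B(S,\mathbb{C})$ rather than the real $-\Delta_u$, and obtains the Bochner identity by applying $\nabla^T_{\bar l}$ to $Lf$ and pairing with $\nabla^T_m f$; your quantity $I=\int|\bar\partial_B(\nabla^{1,0}\phi)|^2e^{-u}d\mu$ is exactly the nonnegative term that appears on the right-hand side of the paper's identity.
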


The proof is elementary, and well known in the K\"ahler case.  We include a detailed proof, as it highlights some of the technical difficulties of working transversely.  We must study the operator
\begin{equation*}
Lf = -(g^{T})^{j\bar{k}}\nabla_{j}\nabla_{\bar{k}}f + (g^{T})^{j\bar{k}}\nabla_{\bar{k}}f\nabla_{j}u
\end{equation*}
on $C^{\infty}_{B}(S, \mathbb{C})$, the space of smooth, basic, \emph{complex} valued functions, which is \emph{not} elliptic.  However, we will still be able to analyze this operator, by observing that $L$ \emph{is} the restriction of an elliptic operator on $C^{\infty}(S, \mathbb{C})$.  In order to prove the lemma, we need to show that $L$ has a complete family of eigenvalues, its kernel is precisely the constants and the lowest strictly positive eigenvalue of L is no smaller than 1.  To do this, we need to study the basic Sobolev spaces;

\begin{definition}\label{basic sobolev norm}
Define $H^{r}_{B}(S,\mathbb{C}) = W^{r,2}_{B}(S, \mathbb{C})$ to be the closure of $C^{\infty}_{B}(S, \mathbb{C})$ with respect to the Sobolev norm induced by the metric $g^{T}$.  For example,
\begin{equation}
\| \phi \|_{1, B} = \int_{S} \phi \bar{\phi} d\mu + \int_{S} (g^{T})^{k\bar{j}} \nabla_{k}\phi \overline{\nabla_{j}\phi}d\mu +  \int_{S}(g^{T})^{j\bar{k}}  \nabla_{\bar{k}}\phi \overline{\nabla_{\bar{j}}\phi}d\mu.
\end{equation}
\end{definition}
\begin{remark}
First, we remark that we can make the above definition for $H^{r}_{B}(S,\mathbb{R})$ in a similar manner. Notice that the Sobolev norm defined above, and the restriction of the standard Sobolev norm to $C^{\infty}_{B}(S,\mathbb{C})$ agree, and hence the basic Sobolev norm is just obtained by restricting the standard Sobolev norm to the closure of the smooth basic functions.  However, this is not the case when we consider the Sobolev spaces $H^{r}(\Omega^{p}_{B})$ of $p$-forms, and so we choose to distinguish between the two to avoid confusion.
\end{remark}
From now on, we suppress the symbol $\mathbb{C}$, and let it be understood that we are considering complex valued functions.  In order to show that $L$ has a complete spectrum, it suffices to prove the existence of a Green's function for $L$.  The existence of the Green's function follows from a standard argument, once we have Rellich's theorem, and an elliptic a priori estimate.  Rellich's theorem was proved for general foliations in \cite{KamTond}.  We have

\begin{Lemma}[\cite{KamTond} Proposition 4.5]\label{basic Rellich}
$\forall r\geq 0$ and $t>0$ the inclusion $H^{r+t}_{B}(S) \hookrightarrow H^{r}_{B}(S)$ is compact.
\end{Lemma}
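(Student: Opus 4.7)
The plan is to derive the basic Rellich compactness from the classical Rellich-Kondrachov theorem on the compact Riemannian manifold $S$, exploiting the fact that $H^{r}_{B}(S)$ sits as a closed subspace of the ordinary Sobolev space $H^{r}(S)$.

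First, I would identify $H^{r}_{B}(S)$ with a closed subspace of the standard Sobolev space $H^{r}(S)$. By the remark following Definition~\ref{basic sobolev norm}, the basic Sobolev norm on $C^{\infty}_{B}(S,\mathbb{C})$ agrees with the restriction of the ordinary Sobolev norm of $(S,g)$ to basic functions. One verifies this in preferred local coordinates: for a basic function $f$ one has $\xi f = 0$, so $g^{ij}\nabla_{i}f\,\overline{\nabla_{j}f} = (g^{T})^{ij}\nabla_{i}f\,\overline{\nabla_{j}f}$, and the analogous reduction for higher-order covariant derivatives follows because the mixed Christoffel symbols with one foliate index vanish in these coordinates (and successive $\nabla$'s of a basic function remain orthogonal to $\xi$ in the relevant slots). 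Consequently $H^{r}_{B}(S)$, being a norm-completion of $C^{\infty}_{B}(S)$, embeds isometrically as a closed linear subspace of $H^{r}(S)$.

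Next, I would invoke the classical Rellich-Kondrachov compact embedding on the compact Riemannian manifold $S$: for every $r \geq 0$ and $t > 0$, the inclusion $H^{r+t}(S) \hookrightarrow H^{r}(S)$ is compact. The conclusion is then immediate by a routine subspace argument. Given a bounded sequence $\{f_{n}\} \subset H^{r+t}_{B}(S)$, the norm identification makes it a bounded sequence in $H^{r+t}(S)$, so after passing to a subsequence $f_{n_{k}} \to f$ in $H^{r}(S)$ by classical Rellich. Since each $f_{n_{k}} \in H^{r}_{B}(S)$ and $H^{r}_{B}(S)$ is a closed subspace of $H^{r}(S)$, the limit $f$ also lies in $H^{r}_{B}(S)$, and the convergence holds in the basic Sobolev norm.

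The main, and essentially only, step requiring genuine care is the identification of norms at higher order, where transverse covariant derivatives must be compared to Levi-Civita covariant derivatives on basic sections. In the isometric setting this is a bookkeeping exercise in preferred coordinates; in other formulations one may only obtain equivalence of norms rather than equality, which still suffices for the compactness transfer. Once this comparison is settled, no further analytic input is needed: the full strength of the classical Rellich theorem on the ambient compact manifold furnishes the result.
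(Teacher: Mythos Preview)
The paper does not supply a proof of this lemma; it is quoted as Proposition~4.5 of \cite{KamTond} and used as a black box. Your argument is therefore not a comparison against the paper but a self-contained justification, and it is essentially correct: identify $H^{r}_{B}(S)$ with a closed subspace of the ordinary Sobolev space $H^{r}(S)$ carrying an equivalent norm, apply classical Rellich--Kondrachov on the compact manifold $S$, and use closedness of the subspace to keep the limit basic.

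One point deserves tightening. Your assertion that in preferred coordinates the mixed Christoffel symbols with one foliate index vanish is not correct for the full Levi-Civita connection of a Sasaki metric: by definition $\Phi(X) = \nabla_{X}\xi$, so for $X\in D$ one has $\nabla_{X}\xi = \Phi X \neq 0$, and consequently the full Hessian $\nabla^{2}f$ of a basic function has nonzero $(\xi,\cdot)$-components in general. Thus the full $H^{r}$-norm does not literally coincide with the transverse one for $r\geq 2$. What is true, and what the paper itself records in the remark following Lemma~\ref{basic elliptic reg} (again citing \cite{KamTond}), is that the two norms are \emph{equivalent} on basic sections. Since you already observe that equivalence suffices for the compactness transfer, the argument stands; simply replace the equality claim for higher $r$ by equivalence and drop the incorrect Christoffel assertion.
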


We can obtain the required elliptic a priori estimate, and regularity theorems by observing that on basic functions, $-L$ agrees with the elliptic operator
\begin{equation*}
\tilde{L} = g^{\alpha \beta} \nabla_{\alpha}\nabla_{\beta} - g^{\alpha \beta} \nabla_{\alpha}u\nabla_{\beta}.
\end{equation*}
By standard elliptic theory, the operator $\tilde{L}$ has the elliptic a priori estimate.  In particular, there is a constant $C_{r}$ so that, for any $\phi \in H^{r+1}_{B}(S) \subset H^{r+1}(S)$ we have
\begin{equation*}
\| \phi \|_{r+2} \leq C_{r} \left( \| L\phi \|_{r} + \| \phi \|_{r+1} \right).
\end{equation*}
By the remark following Definition~\ref{basic sobolev norm} we obtain,

\begin{Lemma}\label{basic elliptic reg}
Let $\phi \in H^{r+1}_{B}$ has $L\phi \in H^{r}_{B}$.  Then $\phi \in H^{r+2}_{B}$, and
\begin{equation*}
\| \phi \|_{r+2, B} \leq C_{r} \left( \| L\phi \|_{r,B} + \| \phi \|_{r+1,B} \right).
\end{equation*}
\end{Lemma}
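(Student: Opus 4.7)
The plan is to reduce the desired estimate for $L$ in the basic Sobolev norms to the standard elliptic a priori estimate for the genuinely elliptic operator $\tilde L$ in the standard Sobolev norms on the compact Riemannian manifold $(S,g)$. The two key ingredients to combine are the identity $L\psi = -\tilde L\psi$ for basic $\psi$ (already noted in the excerpt, and verified directly from the fact that $u$ and $\psi$ are basic, so only transverse derivatives of $\psi$ and $u$ survive in both operators) and the remark following Definition~\ref{basic sobolev norm} that the basic and standard Sobolev norms agree on $C^\infty_B(S)$.

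First I would note that $\tilde L$ is uniformly elliptic on $S$ with smooth coefficients (since $u$ is smooth), so the standard elliptic a priori estimate on the compact manifold $S$ gives, for any $\psi \in H^{r+1}(S)$ with $\tilde L\psi \in H^r(S)$, that $\psi \in H^{r+2}(S)$ with
\begin{equation*}
\|\psi\|_{r+2} \leq C_r\bigl(\|\tilde L\psi\|_r + \|\psi\|_{r+1}\bigr).
\end{equation*}
Applied to $\phi \in H^{r+1}_B \subset H^{r+1}(S)$, and using $\tilde L\phi = -L\phi$ in the sense of distributions (this identity holds on the dense subspace $C^\infty_B$ and extends by continuity), we obtain $\phi \in H^{r+2}(S)$ together with the corresponding estimate in standard norms.

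To upgrade this to membership in $H^{r+2}_B$ and rewrite the estimate in basic norms, I would approximate $\phi$ by smooth basic $\phi_n \to \phi$ in $H^{r+1}_B$, and apply the elliptic estimate to the smooth basic differences $\phi_n - \phi_m$. For such smooth basic functions the basic Sobolev norms equal the restriction of the standard ones, so
\begin{equation*}
\|\phi_n - \phi_m\|_{r+2,B} \leq C_r\bigl(\|L(\phi_n - \phi_m)\|_{r,B} + \|\phi_n - \phi_m\|_{r+1,B}\bigr).
\end{equation*}
Provided $L\phi_n \to L\phi$ in $H^r_B$, the right-hand side tends to zero, the sequence is Cauchy in $H^{r+2}_B$, and its limit, which is $\phi$, yields both the claimed regularity and the asserted estimate.

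The main obstacle is precisely this last point: producing approximants $\phi_n$ whose images $L\phi_n$ converge to $L\phi$ in $H^r_B$, rather than in the weaker $H^{r-1}_B$ topology that comes for free from $\phi_n \to \phi$ in $H^{r+1}_B$. The cleanest route is a transverse mollification in preferred local coordinates (\S 3): convolve only in the $z^j, \bar z^j$ variables and patch with a basic partition of unity, which preserves the basic property and, because $\phi$ already lies in $H^{r+2}(S)$ by the previous paragraph, delivers convergence in every Sobolev order up to $r+2$. Alternatively, one can use the spectral resolution of a self-adjoint extension of $L$ on $L^2_B(S)$, whose existence is ensured by Rellich's theorem (Lemma~\ref{basic Rellich}) and the Friedrichs construction.
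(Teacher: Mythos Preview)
Your core approach matches the paper's exactly: observe that $L$ agrees with the genuinely elliptic operator $\tilde L$ on basic functions, invoke the standard elliptic a priori estimate for $\tilde L$ on the compact manifold $(S,g)$, and then transfer to basic norms via the remark after Definition~\ref{basic sobolev norm}. The paper in fact treats the lemma as an \emph{immediate} consequence of that remark and gives no further argument.

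The extra step you worry about---upgrading $\phi \in H^{r+2}(S)$ to $\phi \in H^{r+2}_B$---is a point the paper glosses over, and your instinct to justify it is sound. Your transverse mollification scheme is correct but heavier than necessary (and the existence of a basic partition of unity subordinate to preferred charts deserves its own verification). A cleaner device, available in the Sasaki setting, is averaging over the torus $\mathfrak{T}$ obtained as the closure of the Reeb flow: since $\xi$ is Killing, $\mathfrak{T}$ acts by isometries, so this averaging is a bounded projection $H^{s}(S)\to H^{s}(S)$ onto the $\mathfrak{T}$-invariant (i.e.\ basic) functions for every $s$. Applying it to any sequence of smooth functions converging to $\phi$ in $H^{r+2}(S)$ produces smooth basic approximants converging in $H^{r+2}$, hence $\phi \in H^{r+2}_B$ with the desired estimate. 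This replaces both the mollification and the Cauchy-sequence argument in one stroke.
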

\begin{remark}
In fact, this holds more generally.  It is well known (see for example \cite{KamTond}) that the restriction of the Sobolev norm on $H^{r}(\Omega)$ to the subspace $H^{r}(\Omega_{B})$ of basic forms induces a norm equivalent to the basic Sobolev norm. In particular, Lemma~\ref{basic elliptic reg} holds for basic forms.
\end{remark} 
Observe now that in Definition~\ref{basic sobolev norm} we can replace the measure $d\mu$ with $e^{-u}d\mu$ to obtain a \emph{weighted} basic Sobolev space, which we denote by $\widehat{H^{r}_{B}}(S)$.  It is clear that the weighted basic Sobolev norm is equivalent to the unweighted norm, and hence Lemmas~\ref{basic Rellich} and~\ref{basic elliptic reg} hold for the weighted norm.  A corollary of Lemmas~\ref{basic Rellich} and~\ref{basic elliptic reg} is
\begin{corollary}
$Ker L \subset H^{2}_{B}(S)$ is finite dimensional, and consists of smooth, basic functions.
\end{corollary}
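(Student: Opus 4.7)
The plan is to establish the two assertions separately: smoothness (and basicness) of kernel elements by an elliptic bootstrap, and finite-dimensionality by the usual Rellich compactness argument applied in the basic Sobolev scale.

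For smoothness, I would start with $\phi \in \ker L \subset H^2_B(S)$ and note that $L\phi = 0 \in H^r_B(S)$ for every $r \geq 0$. Lemma~\ref{basic elliptic reg} then promotes $\phi \in H^{r+1}_B$ to $\phi \in H^{r+2}_B$ whenever $L\phi \in H^r_B$, so a straightforward iteration starting from $r=0$ gives $\phi \in \bigcap_{r\geq 0} H^r_B(S)$. Because the basic Sobolev norm is equivalent to the restriction of the standard Sobolev norm on $S$ (by the remark following Definition~\ref{basic sobolev norm}), the usual Sobolev embedding applies and yields $\phi \in C^\infty(S)$. That $\phi$ is basic is essentially built into membership in $H^2_B(S)$: $\phi$ is a Sobolev limit of elements of $C^\infty_B(S)$, so $\mathcal{L}_\xi \phi = 0$ holds distributionally, and once $\phi$ is smooth this upgrades to the classical identity $\mathcal{L}_\xi \phi = 0$.

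For finite-dimensionality, apply Lemma~\ref{basic elliptic reg} with $r=0$ to any $\phi \in \ker L$ to obtain
\begin{equation*}
\|\phi\|_{2,B} \leq C\bigl(\|L\phi\|_{0,B} + \|\phi\|_{1,B}\bigr) = C\|\phi\|_{1,B}.
\end{equation*}
A standard interpolation inequality $\|\phi\|_{1,B} \leq \epsilon\|\phi\|_{2,B} + C_\epsilon \|\phi\|_{0,B}$—available in the basic scale by the same norm-equivalence—allows one to absorb the $\|\phi\|_{2,B}$ term on the right and produce $\|\phi\|_{2,B} \leq C'\|\phi\|_{0,B}$, uniformly over $\phi \in \ker L$. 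Consequently the $L^2_B$-unit ball of $\ker L$ is bounded in $H^2_B(S)$, and by Lemma~\ref{basic Rellich} it is therefore precompact in $H^0_B(S)$. By F.~Riesz's theorem a normed space whose unit ball is precompact is finite-dimensional, so $\dim \ker L < \infty$.

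The step I expect to require the most care is ensuring that the tools we are borrowing from the ambient elliptic theory—the $a$ $priori$ estimate and the interpolation inequality—genuinely descend to the intrinsic basic Sobolev spaces. This is precisely the role of the norm-equivalence remark after Definition~\ref{basic sobolev norm}: it reduces each inequality to the familiar statement on $H^r(S)$ and then restricts to $H^r_B(S)$. Once that equivalence is invoked, the remainder of the argument is a transcription of the classical Fredholm/Riesz scheme and presents no additional obstacles.
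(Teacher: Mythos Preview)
Your proposal is correct and is precisely the standard Rellich--bootstrap argument that the paper is invoking: the corollary is stated without proof as an immediate consequence of Lemmas~\ref{basic Rellich} and~\ref{basic elliptic reg}, and your writeup supplies exactly the details that justify that implication. Nothing further is needed.
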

Note moreover, that the operator $L$ is self-adjoint with respect to the inner product in $\widehat{H^{0}_{B}}(S)$.  In particular, if $f$ is basic, $L$ satisfies
\begin{equation*}
\int_{S} |\nabla f|^{2}e^{-u}d\mu = \langle Lf, f \rangle.
\end{equation*}
We see immediately that if $f \in Ker L \subset C^{\infty}_{B}(S)$, then $f$ is constant, and that all the eigenvalues of $L$ are real.  Now, $L : \widehat{H^{2}_{B}} \rightarrow \widehat{L^{2}_{B}}$, is self-adjoint and so we can apply the usual argument to obtain the existence of the Green's function $G : \widehat{L^{2}_{B}} \rightarrow \widehat{L^{2}_{B}}$ which is compact and self-adjoint, hence has a spectral decomposition, and yields the spectrum of $L$.  It now suffices to prove that the smallest positive eigenvalue of $L$ is no smaller than $1$.  This is a straightforward computation.  Assume that
\begin{equation*}
\int_{S} fe^{-u}d\mu =0 \text{  }\text{ that is} f \perp Ker L \text{ in } \widehat{L^{2}_{B}}.
\end{equation*}
We use a Bochner type argument.  Differentiating the equation yields
\begin{equation*}
\begin{aligned}
\nabla^{T}_{\bar{l}}(Lf) = &-(g^{T})^{j\bar{k}}\nabla^{T}_{j}\nabla^{T}_{\bar{l}}\nabla^{T}_{\bar{k}}f + Ric^{T}_{\bar{l}s}(g^{T})^{s\bar{k}}\nabla^{T}_{\bar{k}}f +( g^{T})^{j\bar{k}}\nabla^{T}_{\bar{l}}\nabla^{T}_{j}u \nabla^{T}_{\bar{k}}f\\& + ( g^{T})^{j\bar{k}}\nabla^{T}_{j}u \nabla^{T}_{\bar{l}}\nabla^{T}_{\bar{k}}f.
\end{aligned}
\end{equation*}
We now multiply by $(g^{T})^{m\bar{l}}\nabla^{T}_{m}f$ and integrate with respect to the weighted measure $e^{-u}d\mu$.  The first term is
\begin{equation*}
-\int_{S}(g^{T})^{j\bar{k}}(g^{T})^{m\bar{l}}\nabla^{T}_{j}\nabla^{T}_{\bar{l}}\nabla^{T}_{\bar{k}}f\nabla^{T}_{m}f e^{-u}d\mu.
\end{equation*}
Since $S$ is Sasakian, we observe that $\nabla^{T}_{j}\nabla^{T}_{\bar{l}}\nabla^{T}_{\bar{k}}f = \partial_{j}\nabla^{T}_{\bar{l}}\nabla^{T}_{\bar{k}}f$ as the connection coefficients with mixed barred and unbarred indices are zero.  Let $V^{\bar{l}}= (g^{T})^{m\bar{l}}\nabla^{T}_{m}f$.  Integration by parts yields
\begin{equation*}
\begin{aligned}
\int_{S}\nabla^{T}_{\bar{l}}\nabla^{T}_{\bar{k}}f  \big[&-(g^{T})^{j\bar{p}}\partial_{j}g^{T}_{\bar{p}q}(g^{T})^{q\bar{k}} V^{\bar{l}}\\
&+(g^{T})^{j\bar{k}}\partial_{j}V^{\bar{l}} - (g^{T})^{j\bar{k}}V^{\bar{l}}\partial_{j}u\\
&+(g^{T})^{j\bar{k}}V^{\bar{l}} g^{\alpha \beta}\partial_{j}g_{\beta \alpha} \big] e^{-u}d\mu.
\end{aligned}
\end{equation*}
By computing in preferred local coordinates, we see that $g^{\alpha \beta}\partial_{j}g_{\beta \alpha} = (g^{T})^{q\bar{p}}\partial_{j}g^{T}_{\bar{p}q}$.  Now, the transverse K\"ahler condition implies that the first and last terms cancel.  Using that $\nabla^{T}_{\bar{l}}\nabla^{T}_{j}u = \partial_{\bar{l}}\partial_{j} u = g^{T}_{\bar{l}j} - Ric^{T}_{\bar{l}j}$ we obtain
\begin{equation}\label{pos eigenvalue}
\begin{aligned}
\int_{S}\nabla_{\bar{l}}(Lf)(g^{T})^{m\bar{l}}\nabla_{m}f e^{-u}d\mu = &\int_{S}\big[(g^{T})^{j\bar{k}}(g^{T})^{m\bar{l}}\nabla^{T}_{\bar{l}}\nabla^{T}_{\bar{k}}f \nabla^{T}_{j}\nabla^{T}_{m}f\\ &+ (g^{T})^{m\bar{l}}\nabla^{T}_{\bar{l}}f \nabla^{T}_{m}f\big] e^{-u}d\mu.
\end{aligned}
\end{equation}
Since the first term in~(\ref{pos eigenvalue}) is positive, we see immediately that if $f$ is an eigenfunction of $L$ with eigenvalue $\lambda >0$, then $\lambda \geq 1$. We have succeeded in proving Lemma~\ref{poincare ineq}.

We now turn our attention to the proof of Lemma~\ref{minimizer}.  The method of proof is essentially that of Rothaus \cite{Roth}.  We refer the reader to \cite{EmLaNave} for a detailed argument in the K\"ahler case.  We first need the logarithmic Sobolev inequality.
\begin{prop}[logarithmic Sobolev inequality]
Let $(M^{n},g)$ be a closed, Riemannian manifold.  For any $a>0$ there exists a constant $C(a,g)$ such that if $\phi >0$ satisfies $\int_{M}\phi^{2} =1$, then
\begin{equation*}
\int_{M}\phi^{2}\log\phi d\mu \leq a\int_{M} |\nabla \phi|^{2}d\mu + C(a,g).
\end{equation*}
\end{prop}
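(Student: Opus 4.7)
The plan is to derive the logarithmic Sobolev inequality from the standard Sobolev embedding $W^{1,2}(M) \hookrightarrow L^{p}(M)$ via Jensen's inequality and a Young-type trick for the logarithm; this is the classical approach of Rothaus. Since $(M,g)$ is closed, one has for some $p > 2$ (e.g.\ $p = 2n/(n-2)$ if $n \geq 3$, any $p>2$ if $n \leq 2$) the Sobolev inequality
\begin{equation*}
\|\phi\|_{L^{p}}^{2} \leq C_{S}\bigl(\|\nabla \phi\|_{L^{2}}^{2} + \|\phi\|_{L^{2}}^{2}\bigr)
\end{equation*}
for a constant $C_{S} = C_{S}(g)$.

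The key step is to apply Jensen's inequality to the concave function $\log$ against the probability measure $\phi^{2}\,d\mu$ (which is a probability measure precisely by the normalization $\int_{M}\phi^{2}\,d\mu = 1$). This gives
\begin{equation*}
(p-2)\int_{M} \phi^{2} \log \phi \, d\mu \;=\; \int_{M} \phi^{2} \log\bigl(\phi^{p-2}\bigr)\, d\mu \;\leq\; \log \int_{M} \phi^{p} \,d\mu \;=\; p \log \|\phi\|_{L^{p}}.
\end{equation*}
Substituting the Sobolev estimate and using $\|\phi\|_{L^{2}} = 1$, I would obtain
\begin{equation*}
\int_{M} \phi^{2} \log \phi \, d\mu \;\leq\; \frac{p}{2(p-2)}\log\Bigl(C_{S}\bigl(\|\nabla \phi\|_{L^{2}}^{2} + 1\bigr)\Bigr).
\end{equation*}

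To convert this logarithmic bound on $\|\nabla \phi\|_{L^{2}}^{2}$ into a linear one, I would invoke the elementary fact that for any $\epsilon > 0$ there exists $C(\epsilon)$ with $\log(1+x) \leq \epsilon x + C(\epsilon)$ for all $x \geq 0$ (a direct consequence of $\log(x)/x \to 0$). Choosing $\epsilon$ so that $\frac{p}{2(p-2)}\epsilon \leq a$ and absorbing the $\log C_{S}$ term into an additive constant, one obtains
\begin{equation*}
\int_{M}\phi^{2}\log \phi\,d\mu \;\leq\; a \int_{M} |\nabla \phi|^{2}\,d\mu + C(a,g),
\end{equation*}
as desired. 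The proof is essentially formal once the Sobolev inequality on $(M,g)$ is in hand; there is no genuine obstacle, only bookkeeping of constants. The only small technical point to mind is the case $n \leq 2$, which is handled by using any $p > 2$ in the Sobolev embedding—none of the downstream estimates depend qualitatively on the particular exponent chosen.
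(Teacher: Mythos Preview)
Your argument is correct and is the standard Rothaus derivation of the logarithmic Sobolev inequality from the ordinary Sobolev embedding. Note, however, that the paper does not actually give a proof of this proposition: it is quoted as a known result and then immediately applied in the proof of Lemma~\ref{minimizer}. So there is no ``paper's own proof'' to compare against; you have simply supplied a (valid) proof of a background fact that the paper takes for granted.
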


\begin{proof}[Proof of Lemma~\ref{minimizer}]
Note that it suffices to show that we can find a minimizer for $\mathcal{W}^{T}(g,f,1)$.  Setting $w=(4\pi)^{n/2}e^{-f/2}$, we seek to minimize
\begin{equation*}
\int_{S}4|\nabla w|^{2} + \left(R^{T}-2\log w -n\log(4\pi) -2n\right)w^{2}d\mu.
\end{equation*}

The logarithmic Sobolev inequality implies any minimizing sequence is uniformly bounded in $W^{1,2}_{B}(S)$.  The same arguments as in \cite{Roth, EmLaNave} yield the existence of a non-negative minimizer $w_{1} \in W^{1,2}_{B}(S)$.  The minimizer must be a weak solution of the Euler-Lagrange equation
\begin{equation*}
-4\Delta w_{1} + R^{T}w_{1} -2w_{1}\log w_{1} -(n\log(4\pi) +2n)w_{1} = \mu^{T}(g,1)w_{1}.
\end{equation*}
By elliptic regularity we see that $w_{1} \in C^{\infty}$, and an application of the comparison principle shows that $w_{1} >0$.  Now, it is clear the $w_{1}$ is basic, as $w_{1} \in C^{\infty}(S) \cap W^{1,2}_{B}(S)$.
\end{proof}

\end{document}